\newtheorem{theorem}{Theorem}[section]
\newtheorem{corollary}[theorem]{Corollary}
\newtheorem{lemma}[theorem]{Lemma}
\newtheorem{proposition}[theorem]{Proposition}
\theoremstyle{remark}
\newtheorem{remark}[theorem]{Remark}
\theoremstyle{remark}
\newtheorem{definition}[theorem]{Definition}
\theoremstyle{remark}
\numberwithin{equation}{section}
 \newcommand{\ep}{\epsilon}
\newcommand{\del}{\delta}
\newcommand{\reals}{\mathbb{R}}
\newcommand{\ints}{\mathbb{Z}}
\newcommand{\nats}{\mathbb{N}}
\newcommand{\disk}{{\mathbb{D}^2}}
\newcommand{\sphere}{{\mathbb{S}}}
\newcommand{\nbhd}{\mathcal{N}}
\newcommand{\inv}{^{-1}}
\newcommand{\ovl}{\bar}
\newcommand{\til}{\widetilde}
\newcommand{\Hdim}{\mathcal{H}}
\newcommand{\subeq}{\subseteq}
\newcommand{\supeq}{\supseteq}
\newcommand{\mand}{\quad \hbox{and} \quad}
\newcommand{\mbf}{\mathbf}
\newcommand{\adm}{\mathcal{A}}
\newcommand{\bslash}{\backslash}
\newcommand{\ul}{\underline}
\def\diam{\operatorname{diam}}
\def\card{\operatorname{card}}
\def\dist{\operatorname{dist}}
\def\im{\operatorname{im}}
\def\lng{\operatorname{length}}
\def\ins{\operatorname{inside}}
\def\modu{\operatorname{mod}}
\def\cl{\operatorname{cl}}
\def\ind{\operatorname{ind}}
\begin{document}
\title{Quasisymmetric structures on surfaces \footnote{2000 MSC Primary 30C65, Secondary 57M50}}
\author{Kevin Wildrick \thanks{The author was partially supported by NSF grants DMS 0244421, DMS 0456940, and DMS-0602191.}\\
University of Michigan Department of Mathematics\\
Ann Arbor, MI 48019\\ 
 kwildric@umich.edu \\ }
\date{}
\maketitle

\abstract{We show that a locally Ahlfors $2$-regular and locally linearly locally contractible metric surface is locally quasisymmetrically equivalent to the disk.  We also discuss an application of this result to the problem of characterizing surfaces in some Euclidean space that are locally bi-Lipschitz equivalent to an open subset of the plane.}

\section{Introduction}
\indent

Quasisymmetric mappings are a generalization of conformal mappings to metric spaces.  Recent work \cite{B-K}, \cite{me} has provided a substantial existence theory, analogous to the classical Uniformization Theorem for Riemann surfaces, for quasisymmetric mappings on metric spaces that satisfy simple geometric conditions.  Quasisymmetric uniformization results have been applied to diverse subjects, including Cannon's conjecture regarding Gromov hyperbolic groups with two-sphere boundary \cite{B-K} and the quasiconformal Jacobian conjecture \cite{QCJ}.  

In this paper, we develop a local existence theory for quasisymmetric mappings on general surfaces.  In other words, we give simple geometric conditions for a metric space homeomorphic to a surface to be considered a generalized Riemann surface.  The geometric conditions we consider are localized versions of Ahlfors regularity and linear local contractibility.  We defer the precise definitions to Section \ref{defs}.

The global versions of these conditions have been studied extensively \cite{QuantTop}, \cite{B-K}, \cite{me}.  A deep theorem of Semmes \cite[Theorem B.10]{QuantTop} states that if a metric space homeomorphic to a connect, orientable, $n$-manifold is complete, Ahlfors $n$-regular and linearly locally contractible, then it supports a weak $(1,1)$-Poincar\'e inequality.  Spaces which support such an inequality can be considered to have ``good calculus", and they are the preferred environment for the theory of quasiconformal mappings on metric spaces \cite{Acta}.  Moreover, they enjoy several geometric properties, such as quasiconvexity \cite{Korte}. 

Our main result is the following theorem.

\begin{theorem}\label{main II} Let $(X,d)$ be a proper, locally Ahlfors $2$-regular, and locally linearly locally contractible ($LLLC$) metric space homeomorphic to a surface.  Then each point of $X$ has a neighborhood that is quasisymmetrically equivalent to the disk.  \end{theorem}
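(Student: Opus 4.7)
The strategy is to reduce Theorem \ref{main II} to the global Bonk--Kleiner uniformization theorem \cite{B-K}, which asserts that an Ahlfors $2$-regular, linearly locally contractible metric space homeomorphic to $\sphere^2$ is quasisymmetrically equivalent to $\sphere^2$. Although $X$ need have no global geometric structure, the plan is to show that a sufficiently small topological disk neighborhood of any point $x \in X$ can be surgically extended to a compact topological $2$-sphere $Y$ on which the Bonk--Kleiner hypotheses hold \emph{globally}.

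More concretely, the first step is to use the $LLLC$ and local Ahlfors regularity hypotheses to select a radius $r_0 > 0$ at $x$ below which both conditions hold with uniform constants. Using that $X$ is a proper topological surface, I would then find a closed topological disk $\bar{D} \subset B(x, r_0/2)$ containing $x$ in its interior and bounded by a Jordan curve that is metrically tame---ideally quasiconvex, or even bi-Lipschitz to $\sphere^1$, in $X$. The second step is to form the metric double $Y = \bar{D} \cup_{\partial \bar{D}} \bar{D}'$, a topological $2$-sphere carrying the natural path metric in which paths may pass freely between the two copies. A small neighborhood $V$ of $x$ sits inside $Y$ with a metric bi-Lipschitz to the original one.

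The main work, and the main obstacle, is verifying that $Y$ is globally Ahlfors $2$-regular and linearly locally contractible. Ahlfors $2$-regularity is comparatively routine: at interior points of either copy of $\bar{D}$ it is inherited from $X$, while at points on the seam one combines two ``half-ball'' contributions. The global LLC condition, however, is delicate at the seam, where small loops crossing between the two copies must be contracted using the local contractibility in each copy together with a controlled retraction onto $\partial \bar{D}$. This is precisely where the tameness of the boundary curve enters, and the bulk of the technical work will go into showing that $\bar{D}$ can be chosen to guarantee such contractions, using the $LLLC$ hypothesis to produce a boundary curve with enough metric regularity.

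Once these global properties are established, the Bonk--Kleiner theorem supplies a quasisymmetric homeomorphism $F: Y \to \sphere^2$. Restricting $F$ to the image of $V$ in $Y$ and post-composing with a stereographic projection yields a quasisymmetric map from a neighborhood of $x$ in $X$ onto an open subset of the plane, which may be further restricted to produce a neighborhood of $x$ quasisymmetrically equivalent to $\disk$, as required.
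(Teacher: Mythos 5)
Your overall architecture is in fact the same as the paper's: the paper also reduces to Bonk--Kleiner by doubling a closed Jordan domain across its boundary curve (this is exactly the content of Theorem \ref{main}, proved via the doubling construction of \cite[Section 5]{me} plus conformal welding), and then restricts. The problem is that your proposal leaves unproved precisely the two points that constitute essentially all of the paper's work, and you cannot wave them off as you do.

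First, you never construct the ``metrically tame'' boundary curve; you merely posit a Jordan curve that is ``quasiconvex, or even bi-Lipschitz to $\sphere^1$.'' Nothing in the hypotheses hands you such a curve: a small metric sphere $\{y : d(x,y)=r\}$ need not even be a Jordan curve, and a topological Jordan curve chosen by soft arguments has no metric regularity whatsoever. The paper's route to such a curve is long and unavoidable: it first proves local quasiconvexity (Theorem \ref{quasiconvexity main work}), which itself requires building $(\ep,M)$-quasiarcs from a chain/score-function argument (Proposition \ref{cqa existence}), a co-area estimate (Theorem \ref{co-area}), and an index/homotopy argument (Lemma \ref{homotopy trick}); it then passes to the induced length metric and produces a chord-arc loop surrounding $x$ at a prescribed scale by minimizing the functional $\sigma(\gamma)=\int_\gamma \rho\, ds$ with $\rho(y) = (R/\dist(z,y))^2+1$ (Theorem \ref{quasicircle}). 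A chord-arc loop (hence a quasicircle, via the three-point condition) is what the doubling argument actually needs; asking for a curve bi-Lipschitz to $\sphere^1$ is stronger than necessary and there is no evident way to obtain it directly.

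Second, your claim that Ahlfors $2$-regularity of the double is ``comparatively routine'' and ``inherited from $X$'' at points off the seam misses the genuine obstacle the paper identifies at the outset: compact subsets of a locally Ahlfors $2$-regular, $LLLC$ space need not be Ahlfors regular or $LLC$. The delicate point is the \emph{lower} mass bound for balls centered on or near the boundary curve: a priori the Jordan domain could pinch near its boundary so that $\Hdim^2(\ovl{B}(y,r)\cap \ovl{\Omega})$ is much smaller than $r^2$. The paper handles this by proving that the quasicircle is porous in the closed domain (Theorem \ref{porosity thm}, via Janiszewski's theorem), which supplies a sub-ball of comparable radius entirely inside the domain; the $LLC$ verification at the seam likewise uses the chord-arc (three-point) property of the boundary quantitatively. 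Until you (a) construct a quasicircle surrounding the point at a controlled scale and (b) prove regularity and $LLC$ of the resulting closed Jordan domain, the doubling-plus-Bonk--Kleiner step has nothing to apply to, so as it stands the proposal is a correct outline of the reduction but omits the substance of the proof.
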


Our proof is in fact quantitative, and provides good bounds on the size of the resulting quasidisk.  See Theorem \ref{existence} for the complete result.  An outline of the proof of Theorem \ref{main II} is as follows.  The main task is to construct an Ahlfors $2$-regular and linearly locally connected (a slightly weaker version of linear local contractibility) planar neighborhood of a given point $z \in X$.  We then apply previously established uniformization results from \cite{me} to produce the desired quasisymmetric mapping.  The obstacle is that compact subsets of a locally Ahlfors $2$-regular and $LLLC$ metric space need not be Ahlfors $2$-regular and linearly locally connected.  However, we show that if $\gamma$ is a quasicircle contained in a planar subset of $X$, then the closed Jordan domain defined by $\gamma$ is Ahlfors $2$-regular and linearly locally connected.  Thus it suffices to construct a quasicircle at a specificed scaled that surrounds a given point $p \in X$.  

In constructing the quasicircle, we first show that $(X,d)$ is locally quasiconvex.  This would follow from the result of Semmes \cite[Theorem B.10]{QuantTop}, except that we consider localized conditions.  As it is, our methods resemble those employed by Semmes.  As we have specialized to two dimensions, our proof is fairly elementary and direct.  We also indicate how our proof could be upgraded to give a full local analogue of Semmes result in two dimensions. 

A locally compact and locally quasiconvex space is, up to a locally bi-Lipschitz change of metric, locally geodesic.  With this simplification, we employ discrete methods to construct a loop surrounding $z$ of controlled length and lying in a controlled annulus.  We then solve an extremal problem to produce the desired quasicircle. 

The author would like to thank Mario Bonk and Juha Heinonen for many contributions to this paper.  Also, thanks to Urs Lang for a suggestion which substantially improved the proof of Theorem \ref{quasicircle}.   

\section{Application to local bi-Lipschitz parameterizations}\label{app}
\indent

Theorem \ref{main II} plays a role in the program of Heinonen and others to give necessary conditions for a submanifold of some $\reals^N$ to be locally bi-Lipschitz equivalent to $\reals^2$.  We now give a brief description of this program.  See \cite{JuhaICM}, \cite{JuhaRick}, and \cite{JuhaSull} for a full exposition. 

An oriented, $n$-dimensional submanifold $X$ of $\reals^N$ admits local Cartan-Whitney presentations if for each point $p \in X$ there is an $n$-tuple of flat $1$-forms $\rho = (\rho_1, \hdots, \rho_n)$ defined on an $\reals^N$ neighborhood of $p$, such that near $p$ on $X$, there is a constant $c > 0$ such that 
$$\ast(\rho_1 \wedge \hdots \wedge \rho_n) \geq c > 0.$$
Here $\ast$ denotes the Hodge star operator.  

\begin{theorem}[Heinonen-Sullivan \cite{JuhaSull}]\label{presentation} Let $X \subeq \reals^N$ be an $n$-manifold endowed with the metric inherited from $\reals^N$.  If $X$ is locally Ahlfors $n$-regular,  $LLLC$, and admits local Cartan-Whitney presentations, then $X$ is locally bi-Lipschitz equivalent to $\reals^n$, except on a closed set of measure $0$ and topological dimension at most $n-2$.  
\end{theorem}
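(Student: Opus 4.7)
The plan is to use the Cartan-Whitney presentation to write down an explicit candidate parameterization, and then combine the metric hypotheses with branched-mapping theory on metric measure spaces to upgrade this candidate to a bi-Lipschitz map off a small singular set.

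First I would integrate the presentation. Since each $\rho_i$ is a flat $1$-form on an $\reals^N$-neighborhood of $p$, the line integrals $f_i(x) = \int_\gamma \rho_i$ along rectifiable arcs $\gamma \subeq X$ from $p$ to $x$ are well-defined by Whitney's flat-chain theory, and assembling them gives a locally Lipschitz map $f = (f_1, \ldots, f_n) \colon U \to \reals^n$ on some $X$-neighborhood $U$ of $p$. The Cartan-Whitney condition $\ast(\rho_1 \wedge \cdots \wedge \rho_n) \geq c > 0$ is precisely the statement that the formal Jacobian of $f$, restricted to the approximate tangent $n$-planes of $X$, is bounded uniformly away from zero almost everywhere on $U$.

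Next I would bring in the analytic framework. The theorem of Semmes (\cite[Theorem B.10]{QuantTop}), which only needs local Ahlfors $n$-regularity and $LLLC$, yields a local weak $(1,1)$-Poincar\'e inequality on $U$; together with the local Ahlfors regularity this puts $X$ in the Heinonen-Koskela category of PI-spaces, where a robust theory of Newton-Sobolev mappings of bounded distortion is available. A metric version of Reshetnyak's theorem should then show that $f$, having bounded distortion together with a pointwise Jacobian lower bound, is open and discrete with a branch set $B_f$ that is closed, of measure zero, and of topological dimension at most $n-2$.

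Off $B_f$ the map $f$ is then a local homeomorphism, and the Lipschitz upper bound comes directly from Step 1. The matching lower bound $|f(x) - f(y)| \gtrsim d(x,y)$ I would obtain by a telescoping chaining argument along an almost-geodesic in $X \setminus B_f$, using the Poincar\'e inequality to control oscillation at each scale and the Jacobian lower bound to accumulate positive contributions, giving the desired local bi-Lipschitz equivalence to an open subset of $\reals^n$. The main obstacle is the middle step: proving the appropriate topological theorem (openness, discreteness, branch set of dimension $\leq n-2$) for maps of bounded distortion in the PI-space setting, and verifying that the Cartan-Whitney bound, which is an a priori only pointwise analytic hypothesis, is strong enough to trigger it. The lower Lipschitz estimate in the last step is delicate but, in my view, routine once the Reshetnyak-type statement and Semmes' Poincar\'e inequality are both in hand.
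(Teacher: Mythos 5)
Your outline matches the paper's own sketch of the Heinonen--Sullivan argument: integrate the presentation to obtain a Lipschitz map $f$ with volume derivative bounded uniformly away from zero, show (via the Semmes Poincar\'e inequality and distortion/degree machinery) that $f$ is discrete, open, and sense-preserving, hence a branched covering whose branch set is closed, of measure zero, and of topological dimension at most $n-2$, and conclude that $f$ is locally bi-Lipschitz off that set. The only slip is the claim that $\int_\gamma \rho$ is independent of the rectifiable arc $\gamma \subeq X$ from $p$ to $x$ --- the flat forms $\rho_i$ need not be closed, so one integrates along the ambient segment as in \eqref{qr} instead; with that correction your proposal follows the same route as the paper, which, like you, defers the hard openness/discreteness step to the machinery of \cite{JuhaSull}.
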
 

We now give the basic outline of the proof of Theorem \ref{presentation}.  Given $p \in X$, let $\rho$ be a Cartan-Whitney presentation near $p \in X$.  It is shown that on a sufficiently small open neighborhood $U$ of $p$, the map $f\colon U \to \reals^n$ defined by 
\begin{equation}\label{qr} f(x) = \int_{[p,x]} \rho \end{equation}
is discrete, open, and sense preserving with volume derivative uniformly bounded away from $0$.  If follows that $f$ is a branched covering that is locally bi-Lipschitz off its branch set, which is of measure $0$ and topological dimension at most $n-2$.  

If $n=2$, then this branch set consist of isolated points.  Bonk and Heinonen \cite{JuhaICM} noted that in this case, the measurable Riemann mapping theorem and Theorem \ref{main II} provide a resolution of this branch set, proving the following theorem:

\begin{theorem}[Bonk-Heinonen \cite{JuhaICM}]\label{application}  Let $X \subeq \reals^N$ be a surface endowed with the metric inherited from $\reals^N$.  If $X$ is locally Ahlfors $2$-regular, $LLLC$, and admits local Cartan-Whitney presentations, then $X$ is locally bi-Lipschitz equivalent to a subset of $\reals^2$.  
\end{theorem}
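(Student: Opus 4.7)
The plan is to combine the Cartan-Whitney branched covering sketched in the outline before Theorem~\ref{presentation} with Theorem~\ref{main II} and the planar measurable Riemann mapping theorem to argue, by contradiction, that the branch set of the constructed map $f$ must in fact be empty; then $f$ itself already provides the required local bi-Lipschitz parameterization.

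Fix $p \in X$. The construction outlined before Theorem~\ref{presentation} gives a neighborhood $U$ of $p$ and a map $f\colon U \to \reals^2$ defined by~\eqref{qr} that is a sense-preserving, discrete, open branched covering, locally bi-Lipschitz off its branch set, with volume derivative bounded below by a positive constant. Since $n = 2$, the branch set is discrete, so after shrinking $U$ we may assume $p$ is the only possible branch point in $U$; if $p$ is not a branch point, $f$ itself is locally bi-Lipschitz at $p$ and we are done. Suppose for contradiction that $p$ is a branch point of local degree $k \geq 2$. By Theorem~\ref{main II}, choose a neighborhood $V \subseteq U$ of $p$ and a quasisymmetric homeomorphism $\phi\colon V \to \disk$, and transfer the picture to the plane through $F := f \circ \phi^{-1}\colon \disk \to \reals^2$. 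Since $\phi$ is quasisymmetric between Ahlfors $2$-regular surfaces and $f$ is locally bi-Lipschitz off the single point $p$, the map $F$ is quasiregular on $\disk$, with a removable singularity at $\phi(p)$. The measurable Riemann mapping theorem applied to the Beltrami coefficient of $F$ yields a Stoilow factorization $F = h \circ g$, where $g\colon \disk \to \disk$ is quasiconformal with $g(\phi(p)) = 0$ and $h$ is a non-constant holomorphic map; since $F$ has local degree $k$ at $\phi(p)$, the map $h$ has a branch point of order $k$ at $0$, so $h(w) = h(0) + a(w) w^k$ near $0$ with $a(0) \neq 0$, and its Jacobian $|h'(w)|^2 \approx k^2 |a(0)|^2 |w|^{2(k-1)}$ tends to $0$ as $w \to 0$.

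To close the contradiction, one must transfer this vanishing of $J_h$ back along $g$ and $\phi$ to $f$. Quasiconformal planar maps and quasisymmetric maps between Ahlfors $2$-regular surfaces both admit approximate differentials with Jacobians positive and locally bounded almost everywhere, and both satisfy the area formula. Integrating the Jacobian of $f = h \circ g \circ \phi$ over $B(p,r) \cap V$, rewriting via the area formula as $k \cdot \mathcal{H}^2(f(B(p,r) \cap V))$, and bounding the image area via $h(w) \approx h(0) + a(0) w^k$ applied to the quasisymmetric image of $B(p,r)$ in $\disk$, the factor $|w|^{2(k-1)} \to 0$ forces the averaged Jacobian of $f$ on $B(p,r)$ to tend to zero as $r \to 0$, contradicting the uniform positive lower bound on $J_f$ supplied by the Cartan-Whitney hypothesis. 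The chain-rule and Jacobian-averaging step is the main technical obstacle, since $\phi$ and $g$ are not classically differentiable and one must work with the approximate-differential theory appropriate to quasiconformal and quasisymmetric maps; it is also the point at which two-dimensionality of $X$ is used decisively, via the holomorphic model $z \mapsto z^k$ produced by the measurable Riemann mapping theorem.
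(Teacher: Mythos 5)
Your strategy has a fatal flaw at its core: you try to show that the branch point cannot exist, but branch points are genuinely compatible with all the hypotheses, and the whole content of the Bonk--Heinonen theorem is to produce a bi-Lipschitz chart \emph{despite} them. Consider the cone of total angle $2\pi k$ (a surface that is Ahlfors $2$-regular, linearly locally contractible, embeddable in some $\reals^N$, and carrying the natural Cartan-Whitney data): the developing map to $\reals^2$ is a path isometry with volume derivative identically $1$, yet it has a branch point of degree $k$ at the cone point. So a uniform positive lower bound on the volume derivative of $f$ does not force the branch set to be empty, and no contradiction of the kind you seek is available. The precise technical error in your Jacobian-averaging step is the assertion that the Jacobians of $\phi$ and $g$ are locally bounded: a quasisymmetric (or quasiconformal) map has locally integrable, but in general not locally bounded, volume derivative, and near the branch point the chart $\phi$ produced by Theorem \ref{main II} has volume derivative blowing up exactly so as to compensate the factor $|w|^{2(k-1)}$ coming from the holomorphic model $w \mapsto w^k$; in the cone example the area identity $\int_{B(p,r)} J_f = k\,\Hdim^2(f(B(p,r)))$ is satisfied with both sides comparable to $r^2$, so the averaged Jacobian of $f$ does not tend to zero.

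The paper's argument goes the other way: it accepts the branch point of degree $k\geq 2$, uses the measurable Riemann mapping theorem (with its uniqueness statement) to write $h = f\circ\phi\inv$ as $h(z)=g(z)^k$ with $g\colon\disk\to\disk$ quasiconformal, and then \emph{resolves} the branching by composing with the radial stretch $\rho(z)=|z|^{k-1}z$, obtaining the homeomorphism $\psi=\rho\circ g\circ\phi$. The Jacobian identity $J_f = J_hJ_\phi = kJ_{\rho\circ g}J_\phi = kJ_\psi$ shows that $\psi$ has volume derivative bounded above and below by constant multiples of that of $f$, and a quasiconformal map with volume derivative pinched between positive constants is bi-Lipschitz; the bi-Lipschitz parameterization near $p$ is $\psi$, not $f$. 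If you want to salvage your write-up, you should abandon the contradiction and instead carry out this resolution step, which is where the measurable Riemann mapping theorem and two-dimensionality are actually used.
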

 
As no proof of Theorem \ref{application} is available in the literature, we now sketch a proof of Theorem \ref{application}, as communicated by Bonk and Heinonen.  Using the quantitative result Theorem \ref{existence}, this proof could be made quantitative as well.  

\begin{proof}  Let $p \in X$.  As above, we may find an open neighborhood $U$ of $p$ such that the map $f$ given by \eqref{qr} is well defined.  As $n=2$, we may assume that no point of $U \slash \{p\}$ is a branch point of $f$.  If $p$ is not a branch point of $f$, we may find a smaller open neighborhood $U'$ of $p$ on which $f$ is bi-Lipschitz, as desired. 

Thus we may assume that $p$ is a branch point of $f$.  Then the local degree of $f$ at $p$ is an integer $k \geq 2$. By Theorem \ref{main II}, there is an open neighborhood $U' \subeq U$ of $p$ and a quasisymmetric map $\phi \colon U' \to \disk$.  Now $h:=f \circ \phi \inv$ is a quasiregular mapping from $\disk$ to an open set $\Omega \subeq \reals^2$.  The mapping $h$ defines a Beltrami differential $\mu_h$ on $\disk$.  By the measurable Riemann mapping theorem, there is a quasiconformal mapping $g \colon \disk \to \Omega$ such that the Beltrami differential $\mu_g$ agrees with $\mu_h$ almost everywhere.  The uniqueness statement of the measurable Riemann mapping theorem allows us to assume that $\Omega = \disk$ and $h(z) = g(z)^k$.  Let $\rho \colon \disk \to \disk$ be the radial stretching map
$$\rho(z) = |z|^{k-1}z,$$
and define a homeomorphism $\psi \colon U' \to \disk$ by $\psi = \rho \circ g \circ \phi.$
Then we have the following relationship amongst Jacobian determinants:
$$J_f = J_{h}J_{\phi}  = k J_{\rho \circ g} J_{\phi} = k J_{\psi}.$$
Hence the volume derivative of  $\psi$ is bounded above and below by a constant multiple of the volume derivative of $f$.  Since $\psi$ is quasiconformal, this implies that $\psi$ is bi-Lipschitz, completing the proof. 
\end{proof}

\section{Notations, definitions and preliminary results}\label{defs}

\subsection{Metric spaces}\label{metric spaces}
\indent 

We will often denote a metric space $(X,d)$ by $X$.  Given a point $x \in X$ and a number $r>0$, we define the open and closed balls centered at $x$ of radius $r$ by
$$B_{(X,d)}(x, r) = \{y \in X: d(x,y) < r\} \mand \ovl{B}_{(X,d)}(x,r)= \{y \in X: d(x,y) \leq r\}.$$
Where it will not cause confusion, we denote $B_{(X,d)}(x,r)$ by $B_{X}(x,r)$, $B_d(x,r)$, or $B(x,r)$.  A similar convention is used for all other notions which depend implicitly on the underlying metric space. The diameter of a subset $E$ of $(X,d)$ is denoted by 
$\diam(E),$ and the distance between two subsets $E, F \subeq X$ is denoted by $\dist(E,F).$ For $\ep > 0$, the $\ep$-neighborhood of $E \subeq X$ is given by
$$\nbhd_{\ep}(E) = \bigcup_{x \in E} B(x, \ep).$$
We denote the completion of a metric space $X$ by $\ovl{X}$, and define the metric boundary of $X$ by $\partial{X} =\ovl{X}\bslash X$.  A metric space is said to be proper if every closed ball is compact. 

Let $[a,b]\subset \reals$ be a compact interval.  A continuous map $\gamma \colon [a,b] \to X$ is called a path in $X$.  The path $\gamma$ may also be referred to as a parameterization of its image $\im{\gamma}$.  If $\gamma$ happens to be an embedding, then $\im{\gamma} = \gamma([a,b])$ is called an arc in $X$.   If $\alpha$ is an arc in $X$, and $u, v\in \alpha$, then the segment $\alpha[u,v] \subeq X$ is well defined. 

A path $\gamma \colon [a,b] \to X$ is rectifiable if $\lng(\gamma) < \infty$.  The metric space $(X,d)$ is $L$-quasiconvex, $c \geq 1$, if every pair of points $x,y \in X$ may be joined by a path in $X$ of length no more than $Ld(x,y)$.  We say that $(X,d)$ is locally quasiconvex if for every compact subset $K$ of $X$, there is a constant $L_K$ such that every pair of points $x, y \in K$ may be joined by a path in $X$ of length no more than $L_Kd(x,y)$.  

Given a rectifiable path $\gamma$ of length $L$, and a Borel function $\rho \colon X \to [0,\infty]$, we define the path integral
$$\int_{\gamma} \rho \ ds = \int_{0}^{L} (\rho \circ \gamma_l)(t)\ dt,$$
where $\gamma_l$ denotes the arclength reparameterization of $\gamma$.  

If $\gamma$ is a path connecting points $x,y \in X$ and satisfying 
$$d(x,y) =\lng(\gamma),$$
then $\gamma$ is said to be a geodesic path.  If points $x,y\in X$ may be connected by a geodesic, we define the geodesic segment $[x,y]$ to be image of some geodesic with endpoints $x$ and $y$.  By convention, we assume that $[x,y]=[y,x]$ as sets, and that if $w \in [x,y]$, then $[x,w]$ and $[w,y]$ are subsets of $[x,y]$. Given a geodesic segment $[x,y]$, we denote by $s^{[x,y]}\colon [0, d(x,y)] \to X$ the arc length parameterization of $[x,y]$, with the convention that $s^{[x,y]}$ has initial value $x$ and terminal value $y$.  If $[x,y]$ is a geodesic segment with $x \neq y$, given any continuum $[a,b] \subeq \reals$, we define the standard parameterization of $[x,y]$ by $[a,b]$ to be given by $s^{[x,y]}_{[a,b]}\colon [a,b] \to X$ where
$$s^{[x,y]}_{[a,b]}(t) = s^{[x,y]}\left(\frac{t-a}{b-a}d(x,y)\right).$$

Metric spaces admit arbitrarily fine approximations by discrete spaces in the following sense.  Given $\ep > 0$, a subset $S \subeq X$ is $\ep$-separated if $d(a,b) \geq \ep$ for all pairs of distinct points $a,b \in S$.  By Zorn's lemma, maximal $\ep$-separated sets exist for every $\ep > 0$, and for such sets the collection $\{B(a, \ep)\}_{a \in S}$ covers $X$.  

We denote by $\sphere^2, \reals^2,$ and $\disk$ the sphere, the plane, and the disk, each equipped with the standard metric inherited from the ambient Euclidean space.  

For specificity, we define $\sphere^1:=[0, 2\pi)$ as a set, and topologized and metrized as a subset of the plane under the identification $\theta \leftrightarrow e^{i\theta}.$  A continuous map of $\sphere^1$ to a space $X$ is called a loop in $X$. We define length and integrals for loops as for paths, with obvious modifications.  A collection of points $\{\theta_1,\hdots, \theta_n\}\subeq \sphere^1$ is said to be in cyclic order if they are ordered according to the standard positive orientation on $\sphere^1$.  Given a cyclically ordered collection of points $\{\theta_1, \hdots, \theta_n\}$ containing at least three distinct points, we may unambiguously define the arcs $[\theta_i, \theta_{i+1}]$, $i = 1, \hdots, n, \modu n$, that lie between consecutive points.  

\subsection{Finite dimensional metric spaces}\label{dimension}
\indent 

A metric space $(X,d)$ is said to be doubling if there exists a non-negative integer $N$ such that for each $a \in X$ and $r > 0$, the ball $B(a, r)$ may be covered by at most $N$ balls of radius $r/2$.  If $(X,d)$ is doubling, then we may find constants $Q \geq 0$ and $C \geq 1$, depending only on $N$, such that for all $0<\ep \leq 1/2$, each ball $B(a, r)$ may be covered by at most $C\ep^{-Q}$ balls of radius $\ep r$.  The infimum over such $Q$ is called the Assouad dimension of $(X,d)$.  

For $Q \geq 0$, we will denote the $Q$-dimensional Hausdorff measure on $(X,d)$ by $\Hdim^Q_{(X,d)}$.  For a full description of Hausdorff measure, see \cite[2.10]{Federer}. 

\begin{definition}\label{basic reg def}A metric space $(X,d)$ is called Ahlfors $Q$-regular, $Q\geq 0$, if there exists a constant $C\geq 1$ such that for all $a \in X$ and $0<r\leq \diam{X}$, we have
\begin{equation}\label{strong reg def} \frac{r^Q}{C} \leq \Hdim^Q(\ovl{B}_d(a,r)) \leq Cr^Q.\end{equation}
Note that if the upper bound in \eqref{strong reg def} is valid for all $0< r \leq \diam(X)$, then it is also valid for all $r > \diam(X)$ as well.  
\end{definition}

An Ahlfors $Q$-regular metric space can be thought of as $Q$-dimensional at every scale. For example, the space $\reals^2$ is Ahlfors $2$-regular, while the infinite strip
$$\{(x,y) \in \reals^2 : 0 < y < 1\}$$
is not Ahlfors $Q$-regular for any $Q$.  At small scales, the strip appears two-dimensional, while at large scales it appears one-dimensional.  

For an in-depth discussion of Ahlfors regularity, see \cite[Appendix C]{QuantTop}.

\begin{definition}\label{loc reg definition} A metric space $(X,d)$ is locally Ahlfors $Q$-regular, $Q\geq 0$, if for every compact set $K \subeq X$ there exists a constant $C_K \geq 1$ and a radius $R_K >0$ such that for all $x \in K$ and $0<r\leq R_K$, we have
\begin{equation}\label{loc reg def} {C_K}\inv r^Q \leq \Hdim^Q(\ovl{B}_d(x,r)) \leq C_Kr^Q.\end{equation}
\end{definition}

Note that this definition is localized in two ways: the constant $C_K$ depends on the location of the center of the ball under consideration, and the radius $R_K$ restricts the scales to which the condition applies at this location.   We will only apply this definition to spaces which have many compact subsets, i.e., proper spaces.  

It will be convenient to have a notion where the radius $R_K$ is tied to the size of the set under consideration.

\begin{definition}\label{rel reg definition} A subset of $U$ of a metric space $(X,d)$ is called relatively Ahlfors $Q$-regular, if there exists a constant $C \geq 1$ such that for all $0 < r \leq \diam(U)$ and all $x \in U$, 
\begin{equation}\label{rel reg ineq} {C}\inv r^Q \leq \Hdim^Q(\ovl{B}_X(x,r)) \leq Cr^Q.\end{equation}
\end{definition}

Note that in the definition of a relatively Ahlfors regular set $U$, the balls under consideration may contain points outside of $U$; we require \eqref{rel reg ineq} to hold for $B_X(x, r)$ and not $B_U(x,r)$.  Hence a relatively Ahlfors regular set need not be Ahlfors regular as a metric space.

To state some of our theorems in full generality, we also employ a relative doubling condition.  

\begin{definition}The relative Assouad dimension of a subset $U$ of a metric space $(X,d)$ is the infimum of all $Q \geq 0$ such that there exists a constant $D \geq 1$ with the property that for all $0 < r \leq \diam(U)$, all $x \in U$, and all $0 < \ep \leq 1/2$, the ball $B_X(x, r)$ can be covered by at most $D\ep^{-Q}$ balls in $X$ of radius $\ep r$.
\end{definition}

We now give a local version of the fact that Ahlfors regular spaces are doubling. 

\begin{proposition}\label{loc regular implies loc doubling} Let $(X,d)$ be a locally Ahlfors $Q$-regular metric space, and let $K \subeq X$ be compact.  Let $R_K$ and $C_K$ be the constants associated to $K$ by the local Ahlfors $Q$-regularity condition.  If $U$ is a subset of $X$ such that the $2\diam(U)$-neighborhood of $U$ is contained in $K$ and $\diam(U) \leq R_K/2$, then $U$ has relative Assouad dimension $Q$ with constant $C_K^28^Q.$ \end{proposition}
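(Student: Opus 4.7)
The plan is a standard volume-counting argument: bound the packing number of a ball via Ahlfors regularity, then convert to a covering number.  Fix $x \in U$, fix $r \in (0, \diam(U)]$, and fix $\epsilon \in (0, 1/2]$.  Let $S$ be a maximal $\epsilon r$-separated subset of $B_X(x, r)$.  By maximality, the balls $\{B_X(a, \epsilon r)\}_{a \in S}$ cover $B_X(x, r)$, so it suffices to bound $\card(S)$ by $C_K^2 8^Q \epsilon^{-Q}$.  By the separation, the balls $\{B_X(a, \epsilon r / 2)\}_{a \in S}$ are pairwise disjoint.

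Next I would check that local Ahlfors $Q$-regularity can be applied both at each $a \in S$ (for a lower measure bound) and at $x$ (for an upper bound).  For each $a \in S$, the assumption $r \leq \diam(U)$ gives $d(a, U) \leq d(a, x) < r \leq 2\diam(U)$, so the hypothesis that the $2\diam(U)$-neighborhood of $U$ lies in $K$ forces $a \in K$.  Similarly $x \in K$.  The scales to which regularity is applied are $\epsilon r/2$ at each $a$ and $2r$ at $x$; both are at most $R_K$ because $\epsilon r / 2 \leq r/4 \leq \diam(U)/4 \leq R_K/8$ and $2r \leq 2\diam(U) \leq R_K$.

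Now I would combine the estimates.  Each disjoint ball $\ovl{B}_X(a, \epsilon r /2)$ is contained in $\ovl{B}_X(x, r + \epsilon r/2) \subeq \ovl{B}_X(x, 2r)$, so using disjointness, the lower bound at each $a$, and the upper bound at $x$,
\begin{equation*}
\card(S) \cdot C_K^{-1} (\epsilon r / 2)^Q \leq \sum_{a \in S} \Hdim^Q\bigl(\ovl{B}_X(a, \epsilon r /2)\bigr) \leq \Hdim^Q\bigl(\ovl{B}_X(x, 2r)\bigr) \leq C_K (2r)^Q.
\end{equation*}
Solving for $\card(S)$ gives $\card(S) \leq C_K^2 \cdot 4^Q \epsilon^{-Q}$, which is at most $C_K^2 8^Q \epsilon^{-Q}$, as claimed.

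The only real obstacle is bookkeeping: making certain that every invocation of Definition \ref{loc reg definition} is legal, i.e.\ that the center of every ball considered lies in $K$ and the radius is at most $R_K$.  Both sides of the double inequality in \eqref{loc reg def} are needed, and the hypotheses on $U$ — the $2\diam(U)$-neighborhood containment and the scale bound $\diam(U) \leq R_K/2$ — are exactly calibrated to pass these two checks at the scales $\epsilon r/2$ and $2r$.
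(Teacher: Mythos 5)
Your argument is the same packing-versus-covering volume comparison the paper uses, and the bookkeeping is handled correctly: each $a \in S$ and the center $x$ do lie in $K$, and the scales $\ep r/2$ and $2r$ are indeed at most $R_K$. The one unjustified step is the disjointness invoked in the displayed estimate. The $\ep r$-separation of $S$ makes the \emph{open} balls $B_X(a, \ep r/2)$ pairwise disjoint, as you first state, but the measure lower bound of Definition \ref{loc reg definition} is stated for \emph{closed} balls, and it is the closed balls $\ovl{B}_X(a,\ep r/2)$ that you sum over. Two closed balls of radius exactly half the separation can meet when $d(a,b)=\ep r$, and in a general metric measure space the set of points at distance exactly $\ep r/2$ from both centers can even carry positive $\Hdim^Q$-measure, so the inequality $\sum_{a\in S}\Hdim^Q\bigl(\ovl{B}_X(a,\ep r/2)\bigr) \leq \Hdim^Q\bigl(\ovl{B}_X(x,2r)\bigr)$ is not justified as written.

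The repair is painless and is exactly what the paper does: shrink the packing radius to $\ep r/4$, so that $d(a,b)\geq \ep r$ forces $\ovl{B}_X(a,\ep r/4)\cap\ovl{B}_X(b,\ep r/4)=\emptyset$ outright; the same computation then yields $\card(S)\leq C_K^2 8^Q\ep^{-Q}$, which is precisely the constant in the statement. Alternatively, keep the radius $\ep r/2$ but observe that the lower bound in \eqref{loc reg def} passes to open balls by exhausting $B_X(a,\ep r/2)$ from inside by closed balls of radius $s<\ep r/2$ and letting $s \to \ep r/2$. With either fix the rest of your proof goes through and coincides with the paper's argument.
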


\begin{proof} Let $x \in U$, $0 < r \leq \diam U,$ and $0 < \ep \leq 1/2.$ Then  $x \in K$,  and $0< r \leq R_K/2$.  Let $\{x_i\}_{i \in I}$ be a maximal $\ep r$-separated set in $B(x,r)$.   Then $\{B(x_i,\ep r)\}_{i \in I}$ covers $B(x,r)$, while $\{\ovl{B}(x_i,\ep r/4)\}_{i \in I}$ is disjointed.  Since $\ep < 1/2$, we see that for all $i \in I$, $B(x_i, \ep r) \subeq \ovl{B}(x, 2r)$.  Since the $2\diam(U)$-neighborhood of $U$ is contained in $K$, we see that $x_i \in K$.  We may now apply the local Ahlfors $Q$-regularity condition to see that 
$$\frac{\card(I)}{C_K}\left( \frac{\ep r}{4}\right)^Q \leq \sum_{i \in I} \Hdim^Q\left(\ovl{B}(x_i, \ep r/4)\right) \leq \Hdim^Q(\ovl{B}(x, 2r)) \leq C_K(2r)^Q.$$
This implies that 
$$\card(I) \leq C_K^2 8^Q\ep^{-Q},$$
showing that the relative Assouad dimension of $U$ is at most $Q$ and giving the desired constant. The fact that the relative Assouad dimension of $U$ is precisely $Q$ is similarly straight-forward; since it will not actually be needed later, we leave the proof to the reader. \end{proof}

\subsection{Contractibility and connectivity conditions}\label{LLC}
\indent 

Here we discuss various types of quantitative local connectivity and contractibility.  Perhaps the most basic is the following.

\begin{definition}\label{bounded turning}A subset $E$ of a metric space $(X,d)$ is of $\lambda$-bounded turning in $X$, $\lambda \geq 1$, if each pair of distinct points $x,y \in E$ may be connected by a continuum $\gamma \subeq X$ such that $\diam(\gamma) \leq \lambda d(x,y).$ If $E$ is bounded turning in itself, then it is said to be of bounded turning. 
\end{definition}

Recall that a continuum is a compact connected set containing at least two points.  The condition for a subset $E$ to be of bounded turning in a metric space $(X,d)$ is non-standard; usually only spaces which are of bounded turning in themselves are considered.  The bounded turning condition along with a similar dual condition constitute linear local connectivity. 

\begin{definition}\label{LLC def} Let $\lambda \geq 1$.  A metric space $(X,d)$ is $\lambda$-linearly locally connected ($\lambda$-$LLC$) if for all $a \in X$ and  $r >0$ the following two conditions are satisfied:
\begin{itemize}
\item[(i)]  for each pair of distinct points $x,y \in B(a,r)$, there is a continuum $E \subeq B(a,\lambda r)$ such that $x,y\in E$,  
\item[(ii)] for each pair of distinct points $x,y \in X-B(a,r)$, there is a continuum $E \subeq X-B(a, r/\lambda)$ such that $x,y \in E$. 
\end{itemize}
\end{definition}

Individually, conditions $(i)$ and $(ii)$ are referred to as the $\lambda$-$LLC_1$ and $\lambda$-$LLC_2$ conditions.  Roughly speaking, the $LLC$ condition rules out cusps and bubbles from the geometry of a metric space.  

\begin{remark}\label{LLC term} A space which satisfies the $\lambda$-$LLC_1$ condition is $4\lambda$-bounded turning, and a space which is $\lambda$-bounded turning satisfies  $2\lambda$-$LLC_1$.   The terminology ``linearly locally connected'' stems from the following fact. Let $(X,d)$ satisfy the $\lambda$-$LLC_1$ condition, and let $x \in X$ and $r>0$.  If $C(x)$ be the connected component of $B(x,r)$ containing $x$.  Then $B(x,r/\lambda)\subeq C(x) \subeq B(x,r)$.  
\end{remark}

\begin{definition}\label{global LLcont} A metric space is $\Lambda$-linearly locally contractible, $\Lambda \geq 1$, if for all $a \in X$ and $r \leq \diam(X)/\Lambda$, the ball $B(a, r)$ is contractible inside the ball $B(a, \Lambda r)$.   
\end{definition}

Unfortunately, the term ``linearly locally contractible" has not yet stabilized in the literature.  Our definition is global in nature and agrees with the definitions given in \cite{B-K} and \cite{QuantTop}. The definition given in \cite{JuhaICM} is localized, and agrees with the following:

\begin{definition}\label{LLcont}A metric space $(X,d)$ is locally linearly locally contractible ($LLLC$) if for every compact subset $K \subeq X$, there is a constant $\Lambda_K \geq 1$ and radius $R_K > 0$ such that for every point $x \in K$ and radius $0<r\leq R_K$, the ball $B(x,r)$ is contractible inside the ball $B(x,\Lambda_Kr).$  
\end{definition}

As with Ahlfors regularity, it will be convenient to have a relative version as well.  

\begin{definition}\label{rel LLcont} A subset $U$ of a metric space $(X,d)$ is relatively $\Lambda$-locally linearly contractible if for all $x \in U$ and $0 < r \leq \diam(U)$, the ball $B_X(x, r)$ is contractible inside the ball $B_X(x, \Lambda r).$  
\end{definition}

In certain situations, the $LLC$ and linear local contractibility conditions are equivalent \cite[Lemma 2.5]{B-K}.  We now localize this statement to show that the $LLLC$ condition implies a relative $LLC$ condition for certain sets, quantitatively.  

\begin{definition}\label{rel LLC} An subset $U$ of a metric space $(X,d)$ is relatively $\lambda$-$LLC$, $\lambda \geq 1$, if for all points $x \in U$ and $0 < r \leq \diam(U)$ the following conditions hold:
\begin{itemize}
\item[(i)]  for each pair of distinct points $y,z \in B_X(x,r)$, there is a continuum $\gamma \subeq B_X(x,\lambda r)$ such that $y,z \in \gamma$,  
\item[(ii)] if $B(x,r)$ is compactly contained in $U$, then for each pair of distinct points $y,z \in U\bslash B_X(x,r)$, there is a continuum $\gamma \subeq U\bslash B_X(x, r/\lambda)$ such that $y,z \in \gamma$. 
\end{itemize} 
\end{definition}

\begin{proposition}\label{LLLC implication} Suppose that $(X,d)$ is a linearly locally contractible metric space homeomorphic to a connected topological $n$-manifold, $n \geq 2$, and let $K \subeq X$ be compact.  Let $R_K$ and $\Lambda_K$ be the constants associated with $K$ by the linear local contractibility condition.  If $U \subeq K$ and $\diam(U) \leq R_K$, then $U$ satisfies the first relative $LLC$ condition with constant $\Lambda_K$.  If in addition, $U$ is connected and open in $X$, then $U$ is relatively $\lambda$-$LLC$ for any $\lambda > \Lambda_K.$ \end{proposition}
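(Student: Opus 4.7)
The proposition consists of two claims. For the first, let $x \in U$ and $0 < r \leq \diam(U) \leq R_K$. Since $x \in U \subeq K$, the $LLLC$ hypothesis supplies a homotopy $H \colon B_X(x, r) \times [0, 1] \to B_X(x, \Lambda_K r)$ with $H(p, 0) = p$ and $H(p, 1) = x_0$ for some fixed point $x_0$. For any pair of distinct points $y, z \in B_X(x, r)$, the continuous paths $t \mapsto H(y, t)$ and $t \mapsto H(z, t)$ both lie in $B_X(x, \Lambda_K r)$ and meet at $x_0$, so their union is a continuum in $B_X(x, \Lambda_K r)$ containing $y$ and $z$. This verifies the first relative $LLC$ condition with constant $\Lambda_K$, using only $U \subeq K$ and $\diam(U) \leq R_K$; the same continuum witnesses the first condition for any $\lambda \geq \Lambda_K$.

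For the second claim, assume in addition that $U$ is open and connected, fix $\lambda > \Lambda_K$, and choose an intermediate $\mu \in (\Lambda_K, \lambda)$. Given $y, z \in U \setminus B_X(x, r)$ with $B_X(x, r)$ compactly contained in $U$, note that $U$, being open in the $n$-manifold $X$, is locally path-connected; combined with connectedness this makes $U$ path-connected, and I select a path $\alpha \colon [0,1] \to U$ from $y$ to $z$. If the image of $\alpha$ avoids $\ovl{B}_X(x, r/\lambda)$ it is already the desired continuum. Otherwise, set
\[t_1 = \inf\{t \in [0,1] : \alpha(t) \in \ovl{B}_X(x, r/\mu)\}, \qquad t_2 = \sup\{t \in [0,1] : \alpha(t) \in \ovl{B}_X(x, r/\mu)\},\]
and put $y' = \alpha(t_1), z' = \alpha(t_2)$. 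Continuity gives $d(x, y') = d(x, z') = r/\mu > r/\lambda$, so $y', z' \in U \setminus B_X(x, r/\lambda)$. The remaining task is to join $y'$ and $z'$ by a continuum inside $U \setminus B_X(x, r/\lambda)$; concatenating this with $\alpha|_{[0, t_1]}$ and $\alpha|_{[t_2, 1]}$ then yields the required continuum.

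This last step is the main obstacle. The $LLLC$ hypothesis contracts $B_X(x, r/\mu)$ (which contains $\ovl{B}_X(x, r/\lambda)$) into $B_X(x, \Lambda_K r/\mu) \subset B_X(x, r)$, so the inclusion $\ovl{B}_X(x, r/\lambda) \hookrightarrow B_X(x, r)$ is null-homotopic, and the remaining claim reduces to showing that the ``annular'' region $B_X(x, r) \setminus \ovl{B}_X(x, r/\lambda)$ is path-connected. Null-homotopy alone is insufficient, since a circle in $\reals^2$ is null-homotopic yet separates, so the argument must additionally invoke that $\ovl{B}_X(x, r/\lambda)$ is a genuine metric ball and that $n \geq 2$. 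I would resolve this by covering the compact set $\ovl{B}_X(x, r/\lambda)$ by finitely many Euclidean coordinate charts lying inside $B_X(x, r)$, using the Euclidean fact that removing a compact ball from an $n$-dimensional ball, $n \geq 2$, leaves a path-connected complement, and then patching these local bypasses together by repeated application of the first relative $LLC$ condition established in Part 1. Alternatively, one can argue via \v{C}ech cohomology and local Alexander duality in the manifold, deducing the path-connectedness of the annulus directly from the null-homotopy of $\ovl{B}_X(x, r/\lambda)$ combined with the local Euclidean structure of $X$.
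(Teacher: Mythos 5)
Your first paragraph is fine: the union of the two contraction tracks through the common endpoint is exactly the standard way to get the first relative $LLC$ condition, and your hypotheses ($x\in K$, $r\leq\diam(U)\leq R_K$) are used correctly. (For reference, the paper does not write out a proof at all; it cites Lemma 2.4 of \cite{B-K}, whose argument for the first condition is the one you give.)

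The second half, however, has a genuine gap, and it is precisely the half that carries the content of the proposition. After reducing to joining $y'$ and $z'$ outside $B_X(x,r/\lambda)$, you concede that this is ``the main obstacle'' and offer only two sketches, neither of which works as stated. The Euclidean fact you invoke --- that removing a compact ball from an $n$-ball leaves a connected complement --- is inapplicable: $\ovl{B}_X(x,r/\lambda)$ is a \emph{metric} ball, which in a chart is just some compact set; it need not be homeomorphic to a Euclidean ball, need not even be connected, and can a priori contain separating ``circles,'' so no chart-by-chart patching with the $LLC_1$ condition is justified without a further global topological input. Your alternative --- deducing connectedness of the annulus ``directly from the null-homotopy'' via \v{C}ech cohomology/Alexander duality --- is exactly the nontrivial step and not a formal consequence of null-homotopy (as your own circle example shows); what the standard argument (Bonk--Kleiner Lemma 2.4, or equivalently the index/homotopy trick the paper uses in Lemma \ref{homotopy trick} and in the proof of Theorem \ref{quasiconvexity main work}) actually exploits is the quantitative relation $\lambda>\Lambda_K$: the set to be avoided, $B_X(x,r/\lambda)$, contracts inside $B_X(x,\Lambda_K r/\lambda)$, a ball that still misses the original points $y,z$ since $d(x,y),d(x,z)\geq r>\Lambda_K r/\lambda$, and a separation of $y$ from $z$ by this set is then contradicted by a degree/index argument. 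Note that your reduction to the points $y',z'$ at distance $r/\mu$ destroys exactly this relation: since $\mu\Lambda_K$ may exceed $\lambda$, one can have $r/\mu<\Lambda_K r/\lambda$, so the contraction of $B_X(x,r/\lambda)$ is no longer guaranteed to avoid $y'$ and $z'$, and the intended contradiction is unavailable. So beyond being unproved, the remaining step has been set up in a form that is strictly harder than the one the standard proof handles; you should work with $y,z$ themselves (or rerun the argument with radii chosen so that the contracted ball avoids the points you keep), and then supply the separation argument rather than assert it.
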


As the proof of Proposition \ref{LLLC implication} is nearly identical to that of Lemma 2.4 in \cite{B-K}, we omit it. 
 
\begin{remark}\label{arcs} We may consider alternate versions of bounded turning, $LLC$, and relative $LLC$ where continua are replaced with arcs.  These conditions are quantitatively equivalent in locally path connected spaces, as follows.  It follows from \cite[Theorems 3.15 and  3.30]{Hocking} that if $\gamma:[a,b] \to X$ is a path, then there is an arc $\alpha$ in $X$  that connects $\gamma(a)$ and $\gamma(b)$ and is contained in $\im{\gamma}$.  Thus, a simple covering argument shows that if $(X,d)$ is locally path connected, and $E \subeq X$ is a continuum that is contained in an open set $V \subeq X$, then any pair of points $x,y \in E$ is contained in an arc in $V$. 
\end{remark}

\subsection{Quasisymmetric mappings}\label{qs} 
\indent

Quasisymmetric mappings first arose as the restrictions of quasiconformal mappings to the real line \cite{Beurling}. For the basic theory and applications of quasisymmetric mappings, see \cite{QS} and \cite[Chapter 10]{LAMS}.

\begin{definition}\label{qs def}A homeomorphism $f\colon X \to Y$ of metric spaces is called quasisymmetric if there exists a homeomorphism $\eta\colon [0,\infty) \to [0,\infty)$ such that for all triples $a,b,c \in X$ of distinct points, 
$$\frac{d_Y(f(a),f(b))}{d_{Y}(f(a),f(c))} \leq \eta\left(\frac{d_X(a,b)}{d_X(a,c)}\right).$$
\end{definition}

We will call the function $\eta$ the distortion function of $f$; when $\eta$ needs to be emphasized, we say that $f$ is $\eta$-quasisymmetric.  If $f$ is $\eta$-quasisymmetric, then $f\inv$ is also quasisymmetric with distortion function $(\eta\inv(t\inv))\inv$.  Thus we say that metric spaces $X$ and $Y$ are quasisymmetric or quasisymmetrically equivalent if there is a quasisymmetric homeomorphism from $X$ to $Y$. 

The following result, due to V\"ais\"al\"a \cite[Theorems 3.2, 3.10, 4.4, and 4.5]{QM}, shows that the $LLC$ condition is a quasisymmetric invariant.

\begin{theorem}[V\"ais\"al\"a]\label{LLC is qs invariant} If $X$ is a $\lambda$-$LLC$ metric space and $f\colon X \to Y$ is $\eta$-quasisymmetric, then $Y$ is $\lambda'$-$LLC$ for some $\lambda'$ depending only on $\lambda$ and $\eta$. 
\end{theorem}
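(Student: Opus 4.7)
The natural strategy is to transfer the question through $f$: to verify an LLC condition in $Y$ at a point $a$ and scale $r$, pull back the data via $f\inv$, apply the LLC hypothesis on $X$ to obtain a continuum $E \subeq X$, and observe that $f(E)$ is again a continuum containing the required image points, since $f$ is a homeomorphism and so preserves both compactness and connectedness. The entire quantitative content is then a bound on the distance from $a$ to $f(E)$. This is supplied by the standard quasisymmetric distortion estimate obtained directly from Definition \ref{qs def}: for any $u,v,w \in X$ with $u \neq v$,
$$d_Y(f(u), f(w)) \leq \eta\!\left(\frac{d_X(u,w)}{d_X(u,v)}\right) d_Y(f(u), f(v)).$$

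For the $LLC_1$ condition on $Y$, fix $a \in Y$, $r > 0$, and distinct $x, y \in B_Y(a, r)$. Let $a', x', y'$ be the preimages under $f$, and choose the labeling of $x, y$ so that $R := d_X(a', y') \geq d_X(a', x')$. Since $x \neq y$, at least one preimage differs from $a'$, so $R > 0$. Both $x'$ and $y'$ lie in $B_X(a', 2R)$, so the $\lambda$-$LLC_1$ property of $X$ produces a continuum $E \subeq B_X(a', 2\lambda R)$ containing them. For any $z' \in E$ with $z' \neq a'$, applying the displayed distortion estimate to the triple $(a', y', z')$ gives
$$d_Y(a, f(z')) \leq \eta\!\left(\frac{d_X(a', z')}{d_X(a', y')}\right) d_Y(a, y) < \eta(2\lambda)\, r,$$
and at $z' = a'$ the bound is trivial. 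Hence $f(E) \subeq B_Y(a, \eta(2\lambda) r)$.

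The $LLC_2$ verification is dual. Given distinct $x, y \in Y \bslash B_Y(a, r)$, set $R := \min(d_X(a', x'), d_X(a', y'))$, which is positive since $x, y \neq a$; label so that $R = d_X(a', x')$. Both preimages lie outside $B_X(a', R/2)$, so the $\lambda$-$LLC_2$ property of $X$ supplies a continuum $E$ containing them and disjoint from $B_X(a', R/(2\lambda))$. For any $z' \in E$, applying the distortion estimate to the triple $(a', z', x')$ yields
$$d_Y(a, x) \leq \eta\!\left(\frac{d_X(a', x')}{d_X(a', z')}\right) d_Y(a, f(z')) \leq \eta(2\lambda)\, d_Y(a, f(z')),$$
so $d_Y(a, f(z')) \geq r/\eta(2\lambda)$, and $f(E)$ avoids $B_Y(a, r/\eta(2\lambda))$. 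Taking $\lambda' = \eta(2\lambda)$ simultaneously satisfies both conditions. There is no real obstacle; the only points requiring care are verifying that the relevant distances are positive so that the quasisymmetric ratios are defined, and leaving a factor of $2$ of slack in the radii on the $X$ side so that the LLC hypothesis applies to an open ball strictly containing the preimages.
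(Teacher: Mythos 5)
Your argument is correct, and it is worth noting that the paper itself contains no proof of this statement: it is quoted from V\"ais\"al\"a's work, where the invariance of the $LLC$ conditions is obtained as part of a more general study of quasisymmetric (and quasim\"obius) maps. Your proof is the direct, self-contained verification: pull the configuration back through $f\inv$, apply the $\lambda$-$LLC$ hypothesis in $X$ with a factor-of-two slack in the radius, push the resulting continuum forward (a homeomorphic image of a continuum containing $x\neq y$ is again a continuum), and control distances to $a$ via the quasisymmetry estimate; both ratio bounds check out, since in the $LLC_1$ case $d_X(a',z')/d_X(a',y')\leq 2\lambda$ because $E\subeq B_X(a',2\lambda R)$ and $d_X(a',y')=R$, while in the $LLC_2$ case $d_X(a',x')/d_X(a',z')\leq 2\lambda$ because $d_X(a',z')\geq R/(2\lambda)$ and $d_X(a',x')=R$. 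This buys an explicit constant $\lambda'=\eta(2\lambda)$, which the citation does not display. Two small points of care: the paper's Definition \ref{qs def} is stated for triples of \emph{distinct} points, so the cases $z'=y'$ (in the $LLC_1$ half) and $z'=x'$ (in the $LLC_2$ half) should be disposed of separately --- they are immediate once one observes that $\eta(2\lambda)\geq 1$ for any quasisymmetry of a connected space, or one may simply take $\lambda'=\max\{1,\eta(2\lambda)\}$; and in the $LLC_1$ half one of $x,y$ may coincide with $a$, which your choice of $R$ as the larger of the two preimage distances (so that $R>0$ and the denominators are legitimate) already handles correctly.
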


A metric space that is quasisymmetrically equivalent to $\sphere^1$ is called a quasicircle. Tukia and V\"ais\"al\"a gave the following characterization of quasicircles \cite{QS}. 

\begin{theorem}[Tukia-V\"ais\"al\"a]\label{metric quasicircles} A metric space $(X,d)$ that is homeomorphic to $\sphere^1$ is a quasicircle if and only if $(X,d)$ is doubling and $LLC$.  The doubling and $LLC$ constants can be chosen to depend only on the distortion function of the quasiymmetry, and vice-versa. 
\end{theorem}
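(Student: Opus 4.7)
The forward implication is routine given Theorem \ref{LLC is qs invariant}: the $LLC$ condition transfers from $\sphere^1$ to $X$ by that theorem, and doubling is a well-known quasisymmetric invariant (an $\eta$-quasisymmetric image of an $N$-doubling space is $N'$-doubling with $N'$ depending only on $N$ and $\eta$, via the standard sandwich of $f$-images of balls between concentric balls and a covering argument).

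For the converse, assume $X$ is $N$-doubling, $\lambda$-$LLC$, and homeomorphic to $\sphere^1$. The plan is to construct a quasisymmetric homeomorphism $f \colon \sphere^1 \to X$ by dyadic bisection. The key preliminary observation is that, since $X$ is homeomorphic to $\sphere^1$, the $\lambda$-$LLC_1$ condition upgrades to a strong bounded turning statement: for any two distinct $x, y \in X$, the shorter of the two arcs between them has diameter at most $2\lambda d(x,y)$. This is because any continuum in $X$ joining $x$ and $y$ must contain one of the two arcs (proper continua in $\sphere^1$ are arcs), so the continuum of diameter $\leq 2\lambda d(x,y)$ produced by $LLC_1$ swallows an entire arc.

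I would then build a dyadic hierarchy: choose $p_0^0, p_1^0 \in X$ with $d(p_0^0, p_1^0) \geq \diam(X)/2$, splitting $X$ into two arcs, and recursively bisect each arc into two subarcs by an intermediate-value argument that produces a midpoint $m$ for which both subarcs have diameter bounded by a definite fraction (depending only on $\lambda$) of the parent arc's diameter. This yields dyadic points $\{p_k^n\}$ and nested arcs $\{\alpha_k^n\}$ whose diameters decay geometrically in $n$, and I set $f(e^{2\pi i k/2^n}) = p_k^n$.

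To verify quasisymmetry, I would show that for any two dyadic points $a, b \in X$ the distance $d(a,b)$ is comparable to the diameter of the smallest hierarchy arc containing both of them (the upper bound is immediate from bounded turning, the lower bound from the bisection ratio), and then invoke doubling to bound the number of hierarchy arcs of a given diameter scale that meet a given point. This yields control on the distortion of $f$ on dyadic points purely in terms of $\lambda$ and $N$, and $f$ extends by density and continuity to a quasisymmetric homeomorphism $\sphere^1 \to X$. The main obstacle is this last verification: one must show that siblings and neighbors in the hierarchy have comparable diameters at each level, combining bounded turning (to keep any one child from being too large) with doubling (to keep it from being too small), and then convert this combinatorial control into an explicit distortion function $\eta$ by tracking the level at which two given dyadic points first separate.
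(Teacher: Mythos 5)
The paper does not actually prove this statement --- it is quoted from Tukia--V\"ais\"al\"a \cite{QS} --- so the comparison is with the known argument, whose general shape (a dyadic subdivision of the circle plus a doubling count) your outline correctly reproduces; your forward direction and the upgrade of $LLC_1$ to the three-point/bounded-turning property for a metric circle are also fine. The genuine gap is the pivotal bisection claim: it is \emph{not} true that a bounded turning arc admits a point $m$ splitting it into two subarcs each of diameter at most $\theta$ times the parent's diameter with $\theta<1$ depending only on $\lambda$. Take $A,B,C$ to be the vertices of an equilateral triangle of side length $D$ and let $\alpha=[A,B]\cup[B,C]$, viewed as a subarc of the triangle boundary (a bi-Lipschitz image of the round circle, hence doubling and $LLC$ with absolute constants). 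Every subarc of $\alpha$ has diameter at most twice the distance between its endpoints (for $x\in[A,B]$, $y\in[B,C]$ one has $d(x,y)^2=d(x,B)^2+d(y,B)^2-d(x,B)d(y,B)\geq \tfrac14\bigl(d(x,B)+d(y,B)\bigr)^2$), so $\alpha$ is $2$-bounded turning in the strong sense, and $\diam(\alpha)=D$. Yet for \emph{every} $m\in\alpha$, one of the two subarcs determined by $m$ contains two of the three vertices and therefore has diameter exactly $D$. So no intermediate-value choice of midpoint reduces both children by a definite factor, the per-level geometric decay of hierarchy diameters fails as stated, and with it your ``lower bound from the bisection ratio'' and the final distortion estimate.

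This is not a fatal objection to the strategy, but it is exactly where the real work of Tukia--V\"ais\"al\"a lies and it must be done differently: for instance, one chooses the subdivision point by balancing the distances to the two endpoints (or another scale-invariant rule) and proves only that diameters must drop by a fixed factor after a bounded number of generations, the bound coming from the doubling constant together with bounded turning (nested arcs of comparable diameter force configurations that doubling and the three-point condition exclude after finitely many steps); alternatively one works with relative distances of endpoint pairs rather than diameters. Your sketch acknowledges that siblings' and neighbors' diameters must be compared, but it presupposes the one-step decay that the example above rules out, so as written the construction of $f$ and the verification of an $\eta$ depending only on $\lambda$ and the doubling constant do not go through.
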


\begin{remark}\label{3 point} It is an informative exercise to show that for any Jordan curve $J$ in a metric space $(X,d)$, the $LLC$ condition may be restated in the following more intuitive fashion. Given any two distinct points $x,y\in J$, the set $J\bslash \{x,y\}$ consists of two disjoint arcs $J_1$ and $J_2$.  The Jordan curve $J$ is $LLC$ if and only if there some $\lambda'$ such that for all pairs of distinct points $x, y \in J$,
\begin{equation}\label{three point} \min\{\diam{J_1}, \diam{J_2}\} \leq \lambda' d(x,y).\end{equation}
The $LLC$ constant of $J$ and $\lambda'$ depend only on each other.  The condition \eqref{three point} is  often called the three-point condition.  Theorem \ref{metric quasicircles} implies that it may be used to characterize quasicircles as well. 
\end{remark}

A rectifiable path $\gamma\colon [a,b] \to X$ is said to be an $l$-chord-arc path, $l \geq 1$, if for every $s \leq t \in [a,b]$, 
$$\lng(\gamma|_{[s,t]}) \leq l d(\gamma(s), \gamma(t)).$$
Similarly, a continuous map $\gamma\colon \sphere^1 \to X$ is called an $l$-chord-arc loop if the following condition holds for all $\theta, \phi \in \sphere^1$.  Let $J_1$ and $J_2$ be the unique subarcs of $\sphere^1$ such that $J_1 \cup J_2 = \sphere^1$ and $J_1 \cap J_2 = \{\theta, \phi\}$, then
$$\min\left\{\lng{J_1}, \lng{J_2}\right\} \leq l d(\gamma(\theta), \gamma(\phi)).$$
A chord-arc path or loop which is parameterized by arc length is an embedding, and the image is an arc or Jordan curve, respectively.  Note that by Theorem \ref{metric quasicircles} and the three-point characterization of the $LLC$ condition given by \eqref{three point}, the image of a chord-arc loop is a quasicircle.  

Ahlfors $2$-regular and $LLC$ metric spaces homeomorphic to a simply connected surface have been classified up to quasisymmetry \cite{B-K}, \cite{me}.  We will need the following statement, which is proven in manner similar to \cite[Theorem 1.2 (iii)]{me}.

\begin{theorem}\label{main} Let $X$ be a metric space homeomorphic to the plane such that $\ovl{X}$ is bounded, Ahlfors $2$-regular, and $LLC$, and such that $\partial{X}$ is a Jordan curve satisfying \eqref{three point}.  Then $X$ is quasisymmetrically equivalent to $\disk$.  The distortion function of the quasisymmetry can be chosen to depend only on the constants associated with the assumptions and the ratio $\diam{X}/\diam{\partial{X}}$.  
\end{theorem}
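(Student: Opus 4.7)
The strategy is to metrically double $\ovl{X}$ across $\partial X$ to obtain a metric $2$-sphere, apply the Bonk--Kleiner quasisymmetric uniformization theorem from \cite{B-K}, and then invoke the standard fact that each complementary component of a quasicircle in $\sphere^2$ is quasisymmetrically equivalent to $\disk$.

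First I construct the metric double $Y$ as the quotient of two isometric copies $\ovl{X}_{+}$ and $\ovl{X}_{-}$ of $\ovl{X}$ glued along $\partial X$, equipped with the infimum path metric (where paths are allowed to cross $\partial X$). Since $\partial X$ is a Jordan curve bounding the simply-connected planar space $X$, $\ovl{X}$ is homeomorphic to the closed disk and $Y$ is homeomorphic to $\sphere^2$. The tautological involution $\sigma\colon Y \to Y$ exchanging the two copies is an isometry fixing $\partial X$ pointwise, which will make the geometric verifications on opposite sides of $\partial X$ symmetric.

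Second I verify that $Y$ is Ahlfors $2$-regular and $LLC$ with constants depending only on the data and on $\diam(X)/\diam(\partial X)$. Ahlfors $2$-regularity follows from the regularity of each copy, since every $Y$-ball decomposes into its intersections with $\ovl{X}_{\pm}$, each contained in a ball of comparable radius. For the $LLC$ conditions, the key new feature beyond $\ovl{X}$ is the need to connect (resp. separate) points on opposite sides of $\partial X$: the three-point condition \eqref{three point} on $\partial X$ provides an efficient point of $\partial X$ through which to concatenate continua from the two copies while controlling diameters, thereby producing the required continua inside (or outside) a comparable ball. By \cite[Lemma 2.5]{B-K}, on the sphere $Y$ the $LLC$ condition is quantitatively equivalent to linear local contractibility, so the Bonk--Kleiner theorem applies and yields a quasisymmetric homeomorphism $f\colon Y \to \sphere^2$ with distortion controlled by the constants for $Y$. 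By Theorem \ref{LLC is qs invariant} and the quasisymmetric invariance of doubling, $J := f(\partial X)$ is a doubling $LLC$ Jordan curve in $\sphere^2$, hence a quasicircle by Theorem \ref{metric quasicircles}. A classical quasiconformal reflection across $J$, combined with the measurable Riemann mapping theorem, shows that each complementary component of $J$ in $\sphere^2$ is quasisymmetrically equivalent to $\disk$; composing with $f$ and selecting the component corresponding to $X$ completes the proof.

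The main obstacle is the $LLC_2$ portion of the second step. Two points lying in $\ovl{X}_{+}$ and $\ovl{X}_{-}$ respectively that are $Y$-close are necessarily close to $\partial X$, and to join them in $Y \setminus B(a,r/\lambda)$ for a given ball one must route both halves of the continuum near $\partial X$ while simultaneously avoiding the forbidden ball. The three-point condition on $\partial X$ and the ratio $\diam(X)/\diam(\partial X)$ are precisely what allow one to find such routings quantitatively, and this is where the stated dependence of the final distortion function on that ratio enters.
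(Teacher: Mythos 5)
Your proposal is correct and follows essentially the same route as the paper: glue two copies of $\ovl{X}$ along $\partial X$ to get a metric sphere, check Ahlfors $2$-regularity and the $LLC$ condition for the double (the paper cites this verification from an earlier work rather than sketching it), apply the Bonk--Kleiner uniformization theorem, and then use the quasicircle image of $\partial X$ together with classical quasiconformal reflection/welding to send $X$ quasisymmetrically onto $\disk$. The only cosmetic difference is that the paper phrases the last step via conformal welding producing a global quasisymmetric self-map of $\sphere^2$ carrying $f(X)$ to $\disk$, which is the same classical fact you invoke.
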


\begin{proof}   Throughout this proof, ``the data" refers to the Ahlfors $2$-regularity and $LLC$ constants of $\ovl{X}$, the constant associated to $\partial{X}$ by \eqref{three point}, and the ratio $\diam{X}/\diam{\partial{X}}.$  Let $X'$ be the space obtained by gluing two copies of $\ovl{X}$ together along $\partial{X}$.  Then $\ovl{X}$ embeds isometrically in to $X'$, which is homeomorphic to $\sphere^2$.  Furthermore, it is shown in \cite[Section 5]{me}, that $X'$ Ahlfors $2$-regular and $LLC$, with constants depending only on the data.  Bonk and Kleiner's uniformization result for $\sphere^2$ \cite[Theorem 1.1]{B-K} implies that there is a quasisymmetric homeomorphism $f\colon X' \to \sphere^2$ whose distortion function depends only on the data.  By Theorem \ref{LLC is qs invariant} and Remark \ref{3 point}, $f(\partial{X})$ is an $LLC$ Jordan curve in $\sphere^2$ with constants depending only on the data.  The classical theory of conformal welding (cf.\ \cite{Lehto}, \cite{LehtoTeich}) now implies that there is a global quasisymmetric map $g\colon \sphere^2 \to \sphere^2$, with distortion depending only the data, such that $g\circ f(X) = \disk$.  
\end{proof} 

\section{Local uniformization}
\indent

Theorem \ref{main II} states that if $(X,d)$ is a locally Ahlfors $2$-regular (Definition \ref{loc reg definition}) and $LLLC$ (Definition \ref{LLcont}) metric space homeomorphic to a surface, then each point $z \in X$ has a neighborhood which is quasisymmetrically equivalent to the disk.  The following quantitative result immediately implies Theorem \ref{main II}.

\begin{theorem}\label{existence}
Let $(X,d)$ be a proper, $LLLC$, and locally Ahlfors $2$-regular metric space homeomorphic to a surface. Let $K$ be a compact subset of $X$, and $R_K$, $C_K$, and $\Lambda_K$ be the radius and constants associated to $K$ by the assumptions.  Let $z$ be an interior point of $K$ and set 
$$R_0 =\min \{ \max\{R \geq 0 : \ovl{B}(z, R) \subeq K\}, R_K\} > 0.$$ 
Then there exist constants $A_1,A_2 \geq 1$ depending only on $C_K$ and $\Lambda_K$ such that for all $0< R \leq R_0/A_1$, there is a neighborhood $\Omega$ of $z$ such that
\begin{itemize}
\item[(i)] $B\left(z, R/A_2\right) \subeq \Omega \subeq B(z, A_2R)$,
\item[(ii)] there exists an $\eta$-quasisymmetric map $f\colon \Omega \to \disk$, where $\eta$ depends only on $C_K$ and $\Lambda_K$.  
\end{itemize}
\end{theorem}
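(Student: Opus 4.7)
The plan is to construct a Jordan curve $\gamma$ surrounding $z$ through the annulus $\overline{B}(z, 2R)\setminus B(z, R/2)$ whose image is a quasicircle with data depending only on $C_K$ and $\Lambda_K$, let $\Omega$ be the Jordan domain bounded by $\gamma$ that contains $z$, and then invoke Theorem \ref{main}. As a preliminary, I would adapt Semmes-style arguments to the localized hypotheses to prove that $(X,d)$ is locally quasiconvex on a neighborhood of $z$, with constant depending only on $C_K$ and $\Lambda_K$. A standard construction then permits a locally bi-Lipschitz change of metric to a geodesic one on a ball about $z$; the hypotheses, the conclusion, and all constants are preserved quantitatively, so I henceforth assume the metric is geodesic on a sufficiently large ball around $z$.

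Next, for $R$ small enough (by choosing $A_1$ large), the $LLLC$ condition combined with the surface assumption provides a topological disk neighborhood $V$ of $z$ containing $\overline{B}(z, 3R)$. I would then build a controlled loop around $z$ by discrete methods. Let $S$ be a maximal $\epsilon$-separated set in $\overline{B}(z, 3R/2)\setminus B(z, R)$ with $\epsilon$ a small multiple of $R$; Proposition \ref{loc regular implies loc doubling} bounds $\card(S)$ by a constant depending only on $C_K$. Joining each pair of nearby points of $S$ by a geodesic segment yields a graph $G$ embedded in $V$. Using planarity together with the relative $LLC_1$ property provided by Proposition \ref{LLLC implication}, I would extract a combinatorial cycle in $G$ that separates $z$ from $\partial V$, and realize it as a rectifiable loop $\sigma$ around $z$ of length comparable to $R$ and contained in $\overline{B}(z, 2R)\setminus B(z, R/2)$.

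Among rectifiable loops in $V$ that separate $z$ from $\partial V$ and lie in this annulus, I would solve an extremal problem, minimizing an appropriate length-like functional, with $\sigma$ as an admissible competitor. Arzel\`a--Ascoli (applicable thanks to the doubling property given by Proposition \ref{loc regular implies loc doubling}) produces a minimizer $\gamma$. Standard variational arguments show $\gamma$ is a simple chord-arc loop: any self-intersection or any pair of points violating the three-point condition \eqref{three point} could be bypassed, via the relative $LLC_1$ property in $V$, by a shorter separating loop, contradicting extremality. Theorem \ref{metric quasicircles} then identifies the image of $\gamma$ as a quasicircle with data depending only on $C_K$ and $\Lambda_K$. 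Let $\Omega$ be the open Jordan domain in $V$ bounded by $\gamma$ that contains $z$; the location of $\sigma$ and the distance bounds on $\gamma$ yield condition (i) for a suitable $A_2$.

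It remains to verify that $\overline{\Omega}$ is Ahlfors $2$-regular and $LLC$, with constants depending only on $C_K$ and $\Lambda_K$, so that Theorem \ref{main} applies and delivers the map $f\colon \Omega \to \disk$. The upper regularity bound is inherited from $K$; the lower bound, together with the two $LLC$ conditions for $\overline{\Omega}$, would be obtained by a reflection-type argument across the quasicircle $\gamma$: any detour through $X\setminus\overline{\Omega}$ connecting two points of $\overline{\Omega}$ can be projected onto $\gamma$ with bounded diameter distortion by invoking the three-point condition for $\gamma$ and the $LLLC$ hypothesis, and then swapped for an interior path of comparable diameter. I expect the main obstacle to be twofold, and concentrated in this last stage: first, executing the extremal step so that the minimizer is genuinely a simple separating loop of controlled length rather than a degenerate object; and second, showing that the resulting Jordan domain $\Omega$ retains quantitative Ahlfors $2$-regularity and $LLC$, which is exactly the planar relative version of the hypotheses required to feed into Theorem \ref{main}.
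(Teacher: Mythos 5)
Your outline tracks the paper's strategy closely (local quasiconvexity in the spirit of Semmes, passage to a length/geodesic metric, a discrete construction of a loop around $z$, an extremal problem producing a chord-arc loop, and then Theorem \ref{main} applied to the Jordan domain), but the two places you yourself flag as "the main obstacles" are exactly where the substance lies, and your proposal does not supply the ideas that make them work. First, the extremal step: you minimize a length-like functional over loops constrained to lie in the annulus $\ovl{B}(z,2R)\setminus B(z,R/2)$ and to separate $z$ from $\partial V$. With that hard constraint, the "bypass a bad pair of points by a shorter loop" argument breaks down: the natural shortcut (a geodesic segment between the two points) may leave the annulus, hence is not an admissible competitor, and a plain length minimizer can hug the inner constraint sphere, so extremality gives you nothing at small scales; it is also not clear the minimizer is nondegenerate (it could collapse toward the inner boundary). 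The paper avoids the hard constraint entirely: it minimizes the weighted functional $\sigma(\gamma)=\int_\gamma \bigl[(R/\dist_{d'}(z,\cdot))^2+1\bigr]\,ds$ over rectifiable loops in $B_0\setminus\{z\}$ with $\ind(\gamma,z)\neq 0$, using the polygon from Lemma \ref{first polygon} as competitor. The weight forces the minimizer to stay a definite distance from $z$ (so the limit still has nonzero index and Lemma \ref{dist diam} gives a diameter lower bound), the length term bounds the diameter, and in the chord-arc comparison the geodesic shortcut is automatically admissible because, by additivity of the index, one of the two concatenations with the shortcut still has nonzero index; the weight then quantifies the gain. Without some device of this kind your "standard variational arguments" do not go through.

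Second, the lower Ahlfors $2$-regularity bound for $\ovl{\Omega}$ at points on or near $\im\gamma$. Your "reflection/projection onto $\gamma$" argument is essentially the paper's verification of the two $LLC$ conditions (replace the excursion outside $\Omega$ by a subarc of the chord-arc curve via the three-point condition), but it does not address measure: swapping paths gives connectivity, not mass, and a subset of an Ahlfors regular space need not be Ahlfors regular, so you must show that a ball $B_{\ovl{\Omega}}(x,r)$ centered on the boundary curve contains an interior ball of $\Omega$ of radius comparable to $r$, where the ambient lower mass bound applies. That is precisely the porosity statement the paper proves separately (Theorem \ref{porosity thm}, via Janiszewski's theorem in the disk) and then feeds into the four-case computation of the lower bound. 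As written, your proposal asserts the conclusion of this step rather than proving it, so the argument has a genuine gap there as well.
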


\subsection{Bounded turning and quasiarcs}\label{bt}
\indent 

As discussed in the introduction, our proof of Theorem \ref{existence} requires that we first give a quasiconvexity result.  To do so, we need a technical result similar to, but weaker than, the following theorem of Tukia \cite{BT}. Let $X \subeq \reals^n$ be endowed with the metric inherited from $\reals^n$.  If $X$ is of bounded turning in itself, then any two points of $X$ can be connected by a quasiarc, i.e., the quasisymmetric image of an interval.  

\begin{definition}\label{cqa def} Let $\ep > 0$ and $M \geq 1$.  An arc $\alpha$ in a metric space $(X,d)$ is an $(\ep,M)$-quasiarc if each pair of points $u,v \in \alpha$ with $d(u,v) \leq \ep$, we have
$$\diam{\alpha[u,v]} \leq M\ep.$$
\end{definition}

\begin{proposition}\label{cqa existence} Let $(X,d)$ be a locally path connected metric space, and $z \in X$. Suppose that there is a radius $R > 0$ and constants $Q, D,$ and $\lambda$ such that $B(z, R)$ is of $\lambda$-bounded turning in $X$, and has relative Assouad dimension at most $Q$ with constant $D$. Then there are  constants $M, N\geq 1$ and $c > 0$, all depending only on $Q, D,$ and $\lambda$, with the following property.  For all pairs of points $x,y \subeq B(z , cR)$ and all $0 < \ep < d(x,y)$, there is an $(\ep,M)$-quasiarc connecting $x$ to $y$ which is contained in $B(x, Nd(x,y)).$ 
\end{proposition}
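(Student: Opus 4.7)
The plan is to construct the required $(\epsilon,M)$-quasiarc in two stages: first, obtain a coarse arc joining $x$ to $y$ by applying bounded turning once at scale $d(x,y)$; second, refine the coarse arc at scale $\epsilon$ by inserting short bounded-turning arcs along a $\delta$-net, with the relative Assouad dimension of $B(z,R)$ controlling the local combinatorics.

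\textbf{Stage 1 (coarse initial arc).} First I will fix the constant $c>0$ small enough---say $c \leq 1/(4\lambda+2)$---so that every continuum of diameter at most $2\lambda d(x,y)$ containing two points of $B(z,cR)$ is automatically contained in $B(z,R)$. The $\lambda$-bounded turning of $B(z,R)$ in $X$ together with local path-connectedness (Remark \ref{arcs}) then yields an arc $\alpha_0 \subseteq B(z,R)$ from $x$ to $y$ with $\diam \alpha_0 \leq 2\lambda d(x,y)$. This will already give the final containment in $B(x, N d(x,y))$ with $N = 2\lambda + 1$ (up to a negligible contribution from Stage 2).

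\textbf{Stage 2 (refinement at scale $\epsilon$).} Set $\delta := \epsilon/(20\lambda)$. I enumerate a maximal $\delta$-separated subset $\{x=p_0, p_1, \ldots, p_n = y\}$ of $\alpha_0$ in the order inherited from $\alpha_0$, so that $d(p_i, p_{i+1}) \leq \delta$ for every $i$. Using $\lambda$-bounded turning of $B(z,R)$ once more together with Remark \ref{arcs}, I connect each consecutive pair $p_i, p_{i+1}$ by an arc $\beta_i$ of diameter at most $\lambda \delta = \epsilon/20$. From the resulting continuum $\bigcup_{i} \beta_i$ I extract (Remark \ref{arcs} one more time) a simple arc $\alpha$ from $x$ to $y$, which is the candidate quasiarc.

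\textbf{Verification via the Assouad bound.} To check the $(\epsilon,M)$-quasiarc condition, fix $u,v \in \alpha$ with $d(u,v) \leq \epsilon$ and let $I$ be the set of indices $i$ such that $\beta_i$ meets $\alpha[u,v]$. A scale-comparison argument will place every $p_i$ with $i \in I$ inside a ball centered at $u$ of radius comparable to $\epsilon$ and contained in $B(z,R)$ (this is where the smallness of $c$ enters). Since these $p_i$ form a $\delta$-separated subset of that ball and $\delta = \epsilon/(20\lambda)$, the relative Assouad dimension hypothesis bounds $|I|$ by a constant $D' = D'(\lambda, Q, D)$. Summing $\diam \beta_i \leq \epsilon/20$ over $i \in I$ then gives $\diam \alpha[u,v] \leq D' \epsilon/20 =: M\epsilon$, yielding the desired quasiarc property.

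\textbf{The hard part.} The essential difficulty is making the scale-comparison that bounds $|I|$ genuinely rigorous. If the simple-arc extraction from $\bigcup_i \beta_i$ is carried out naively, the arc $\alpha$ may traverse the $\beta_i$ in an order unrelated to the original enumeration (since distinct $\beta_i$ can intersect each other in complicated ways), and then the natural ball about $u$ containing the $p_i$ has radius comparable to $\diam \alpha[u,v]$ itself---precisely the quantity one is trying to bound. The way around this, presumably using the suggestion of Urs Lang acknowledged in the introduction, is to perform the extraction so that $\alpha$ visits the $\beta_i$ in their original order, skipping some pieces entirely but never reversing the order; then the relevant indices $I$ form a consecutive block, the containing ball has radius controlled purely by $\epsilon$ (via the chain structure), and the Assouad count becomes a direct combinatorial estimate with no circularity.
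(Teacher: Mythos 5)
There is a genuine gap, and it sits exactly at the point you flag as ``the hard part''; the fix you propose does not close it. Order-preserving extraction (the paper's Proposition \ref{arc concatenation} indeed provides this) makes your index set $I$ a consecutive block $\{i,\hdots,j\}$, but it does not place the points $p_i,\hdots,p_j$ in a ball of radius comparable to $\ep$. Knowing $u\in\beta_i$, $v\in\beta_j$ and $d(u,v)\le\ep$ only controls $d(p_i,p_j)$; the intermediate net points $p_l$, $i<l<j$, lie along the segment of the coarse arc $\alpha_0$ between $p_i$ and $p_j$, and $\alpha_0$ has no regularity at scale $\ep$: it can leave $B(u,\ep)$, make an excursion of size comparable to $\lambda d(x,y)$, and return to within $\ep$ of itself many chain-steps later without $\beta_i$ and $\beta_j$ ever intersecting, so the extraction cannot skip the excursion. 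In that situation $\diam(\alpha[u,v])$ is comparable to the excursion, not to $\ep$, and the Assouad count gives nothing, because the $\delta$-separated points $p_l$ are only known to lie in a ball of radius about $\diam(\alpha[u,v])$ --- precisely the quantity being bounded, i.e.\ the circularity you yourself identified. A secondary flaw: a maximal $\delta$-separated subset of $\alpha_0$, enumerated in arc order, need not satisfy $d(p_l,p_{l+1})\le\delta$, since points of the net lying on a distant portion of the arc can ``cover'' the stretch between two net points that are consecutive in the arc order; so even the chain you start from is not guaranteed.

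What is missing is a mechanism that forbids such near-returns, and the paper obtains it variationally rather than constructively. It chooses an $\ep$-chain $\mbf{z}$ from $x$ to $y$ that nearly minimizes the score $\sigma_{\ep}(\mbf{w})=\sum_{w\in\mbf{w}}1+\left(\dist(w,\gamma)/\ep\right)^{2Q}$ over \emph{all} $\ep$-chains, where $\gamma$ is the coarse bounded-turning arc, and only then joins consecutive chain points by bounded-turning arcs. If $u\in\im\gamma_i$, $v\in\im\gamma_j$ with $d(u,v)\le\ep$ and $j-i$ were large, one shortcuts the chain through $u$ and $v$: by Lemma \ref{chains} and the doubling count of the sets $\im\gamma_i,\im\gamma_j$ (each of diameter at most $\lambda\ep$) the detour adds only boundedly many points, so the shortcut strictly decreases the score, a contradiction that bounds $j-i$; the same score term $(\dist(w,\gamma)/\ep)^{2Q}$ is what keeps the chain, and hence the extracted arc, inside $B(x,Nd(x,y))$. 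Your two-stage construction has no minimality to exploit, so no analogue of the shortcut argument is available, and the $(\ep,M)$-quasiarc bound does not follow from what you have written.
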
  

For the proof, we need a lemma regarding the extraction of an arc from the image of a path (see Remark \ref{arcs}).  In general, there is not a unique way to do so.  However, in the case that the path is a concatenation of embeddings, there is a canonical choice.

\begin{proposition}\label{arc concatenation} Let $(X,d)$ be a metric space and $n \in \nats$.  For $i =0,\hdots, n$, let $\gamma_i\colon [a_i, b_i] \to X$ be an embedding.  For $i=0, \hdots, n-1$, assume that $\gamma_i(b_i)=\gamma_{i+1}(a_{i+1})$.  Then there is an arc $\alpha$ connecting $\gamma_0(a_0)$ to $\gamma_n(b_n)$ such that $\alpha \subeq\im(\gamma_0 \cdot \hdots \cdot \gamma_n)$ with the following property: if $u, v \in \alpha$, then there are indices $i,j \in \{0,\hdots, n\}$ such that $u \in \im(\gamma_i)$, $v \in \im(\gamma_j)$, and $\alpha[u,v]$ is contained in $\bigcup \im(\gamma_l)$, where $l$ ranges over indices  between $i$ and $j$, inclusively.
\end{proposition}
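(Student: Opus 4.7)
I plan to prove the proposition by induction on $n$, strengthening the conclusion to the following inductive claim: the arc $\alpha$ may be chosen so as to admit a decomposition
$$\alpha = \beta_0 \cup \beta_1 \cup \cdots \cup \beta_m$$
into consecutive nondegenerate subarcs, ordered along $\alpha$ from $\gamma_0(a_0)$ to $\gamma_n(b_n)$, together with strictly increasing indices $l_0 < l_1 < \cdots < l_m$ in $\{0, 1, \ldots, n\}$ satisfying $\beta_i \subseteq \im(\gamma_{l_i})$ for every $i$. Granted this decomposition, the statement of the proposition is immediate: if $u \in \beta_i$ and $v \in \beta_j$ with $i \le j$, then $\alpha[u,v] \subseteq \beta_i \cup \cdots \cup \beta_j \subseteq \bigcup_{l_i \le l \le l_j} \im(\gamma_l)$, so the required indices may be taken to be $l_i$ and $l_j$.

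The base case $n=0$ is trivial with $\alpha = \im(\gamma_0)$ and the one-subarc decomposition. For the inductive step, given the arc $\alpha_{n-1}$ with decomposition $\beta_0, \ldots, \beta_m$ and indices $l_0 < \cdots < l_m \le n-1$, I would set
$$t^* := \sup\bigl\{t \in [a_n, b_n] : \gamma_n(t) \in \alpha_{n-1}\bigr\}.$$
This set contains $a_n$, since $\gamma_n(a_n) = \gamma_{n-1}(b_{n-1})$ is an endpoint of $\alpha_{n-1}$, and it is closed by continuity of $\gamma_n$ and compactness of $\alpha_{n-1}$, so the supremum is attained. Define
$$\alpha_n := \alpha_{n-1}[\gamma_0(a_0), \gamma_n(t^*)] \,\cup\, \gamma_n([t^*, b_n]).$$

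The next step is to verify that $\alpha_n$ is an arc. Both pieces are arcs in their own right (a subarc of $\alpha_{n-1}$ and the image of an embedding restricted to a subinterval), and by maximality of $t^*$ we have $\gamma_n((t^*, b_n]) \cap \alpha_{n-1} = \emptyset$, so the two pieces meet only at $\gamma_n(t^*)$, which is an endpoint of each. Hence the union is an arc from $\gamma_0(a_0)$ to $\gamma_n(b_n)$. To propagate the decomposition, let $\beta_p$ be the subarc of $\alpha_{n-1}$ containing $\gamma_n(t^*)$. Then $\alpha_n$ decomposes as $\beta_0, \ldots, \beta_{p-1}$, followed by the initial segment of $\beta_p$ up to $\gamma_n(t^*)$, followed by $\gamma_n([t^*, b_n]) \subseteq \im(\gamma_n)$. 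The associated indices form the strictly increasing sequence $l_0 < \cdots < l_p < n$. Degenerate pieces, such as a one-point initial segment of $\beta_p$ or the case $t^* = b_n$ making the appended segment trivial, can be absorbed into an adjacent nondegenerate piece, which preserves both consecutiveness and strict monotonicity of indices.

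The one substantive point is the verification that $\alpha_n$ is an arc rather than merely a continuum: this is where the sup-definition of $t^*$ is used essentially, ensuring that $\gamma_n([t^*, b_n])$ touches $\alpha_{n-1}$ only at the single point $\gamma_n(t^*)$. Once this is in hand, carrying the decomposition through the induction is a matter of careful bookkeeping rather than substantial additional argument.
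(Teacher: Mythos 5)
Your proof is correct, and it takes a genuinely different route from the paper's. The paper builds the arc in a single forward pass: from the current piece $\gamma_{i_k}$ it selects the \emph{largest} index $i_{k+1}$ whose image meets it, cuts both curves at the last point of $\gamma_{i_{k+1}}$ on $\im(\gamma_{i_k})$, and declares the resulting concatenation of subsegments $\gamma_{i_k}|_{[a'_{i_k},b'_{i_k}]}$ to be the desired arc, leaving the injectivity of the concatenation and the stated property largely to the reader. You instead induct on the number of pieces, attaching $\gamma_n$ to the already-built arc $\alpha_{n-1}$ at the last parameter $t^*$ where $\gamma_n$ meets it and truncating $\alpha_{n-1}$ there; the strengthened invariant (a decomposition into consecutive subarcs carried by a strictly increasing sequence of indices) makes the conclusion of the proposition immediate, and the arc-hood check reduces to a single gluing point per step, since $\gamma_n((t^*,b_n])$ misses all of $\alpha_{n-1}$ by maximality. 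What each approach buys: the paper's greedy scan exhibits the arc explicitly as subsegments of a subsequence of the $\gamma_i$ in one pass, while your version trades that explicitness for a cleaner verification — the invariant does the bookkeeping that the paper's proof leaves implicit, and the truncate-and-append step is insensitive to where on $\alpha_{n-1}$ the new curve last returns, a point at which the paper's cut-point formulas require some care. One cosmetic remark: if some intermediate endpoint $\gamma_k(b_k)$ happens to coincide with $\gamma_0(a_0)$, your $\alpha_k$ collapses to a single point, so the induction hypothesis should formally allow a degenerate arc with empty decomposition (the next step then restarts with $\im(\gamma_{k+1})$); this is the same degeneracy the proposition itself implicitly excludes for the final endpoints and costs only a sentence to patch.
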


\begin{proof} We construct the arc via an inductive process.  Let $i_0 = 0$ and $a'_0 = a_0$.  Let $i_1$ be the largest index $i \in \{1, \hdots, n\}$ such that $\im(\gamma_0) \cap \im (\gamma_i) \neq \emptyset$; this is well defined since $\gamma_0(b_0) = \gamma_1(a_1).$  Set  
$$b'_0 = \gamma_0\inv\left(\max \{t \in [a_{i_1}, b_{i_1}] : \gamma_{i_1}(t) \in \im\gamma_0\}\right).$$
Now assume that $k \geq 1$, and that the indices $i_{k-1} < i_{k} \in \{1, \hdots, n\}$ and the interval $[a'_{i_{k-1}}, b'_{i_{k-1}}] \subeq [a_{i_{k-1}}, b_{i_{k-1}}]$ are defined and satisfy
$$\gamma_{i_{k-1}}(b'_{i_{k-1}}) \in \gamma_{i_k}.$$
 Set 
$$a'_{i_k} = \gamma_{i_{k}}\inv (\gamma_{i_{k-1}}(b'_{k-1})) \in [a_{i_k}, b_{i_k}].$$
If $i_k =n$, set $b'_{i_k} = b_n$ and stop.  If not, let $i_{k+1}$ be the largest index $i \in \{i_{k}+1, \hdots, n\}$ such that 
$$\gamma_{i_{k}}([a'_{i_k}, b_{i_k}]) \cap \im(\gamma_i) \neq \emptyset,$$
and define
$$b'_{i_k} = \gamma_{i_k}\inv\left(\max\{t \in [a_{i_{k+1}}, b_{i_{k+1}}] : \gamma_{i_{k+1}}(t) \in \im\gamma_{i_k} \} \right).$$

Since $i_{k} < i_{k+1}$, this process stops after finitely many steps.  Let $i_m = n$ be the final index. Define $\alpha$ to be the image of the concatenation of $\gamma_{i_k}|_{[a'_{i_k}, b'_{i_k}]}$ for $k=0, \hdots, m.$  Then $\alpha$ is an arc with the desired properties.
 \end{proof}

We will also need a notion of a discrete path. For $\ep > 0$, An $\ep$-chain connecting points $x$ and $y$ of $X$ is defined to be a sequence of points $x=x_0, x_1,\hdots, x_n = y$ in $X$ such that $d(x_i, x_{i+1}) \leq \ep$ for each $i=0,\hdots, n-1.$  We will often denote $\ep$-chains using bold face, e.g., $\mbf{x}=x_0, \hdots, x_n.$  In a connected space, any two points may be connected by an efficient chain.  

\begin{lemma}\label{chains} Let $(X,d)$ be a connected metric space and $\ep >0$.  For any pair of points $x, y \in X$, there is an $\ep$-chain $x_0, \hdots, x_n$ connecting $x$ to $y$ that contains an $\ep$-separated set of cardinality at least $n/2$.  
\end{lemma}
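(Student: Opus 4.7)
The approach is to take an $\ep$-chain of minimum length joining $x$ to $y$ and to show that its alternate-indexed subsequence automatically does the job.

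First I would verify that at least one $\ep$-chain from $x$ to $y$ exists, which is a standard consequence of connectedness. Let $A \subeq X$ be the set of points reachable from $x$ by some $\ep$-chain. Then $x \in A$; if $z \in A$, then $B(z,\ep) \subeq A$ (append a nearby point to an existing chain), so $A$ is open; by the contrapositive, $X \bslash A$ is open as well. Connectedness of $X$ forces $A = X$, so in particular $y \in A$.

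By well-ordering of the non-negative integers, there is then an $\ep$-chain $x = x_0, x_1, \hdots, x_n = y$ of minimum length $n$. The key observation is that in any such minimum chain, $d(x_i, x_j) > \ep$ whenever $|i-j| \geq 2$; otherwise, excising the intermediate points $x_{i+1}, \hdots, x_{j-1}$ would yield an $\ep$-chain of strictly smaller length, contradicting minimality. It follows that the even-indexed subset $S = \{x_0, x_2, x_4, \hdots \}$ is $\ep$-separated, and a short count gives $\card(S) = \lfloor n/2 \rfloor + 1 \geq n/2$, as required.

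I do not anticipate any serious obstacle. The main subtlety is ensuring that ``minimum length'' is a meaningful notion, which is why the preliminary connectedness argument is needed; once that is in hand, the shortening trick and the parity count are essentially automatic.
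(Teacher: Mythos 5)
Your proof is correct and follows essentially the same route as the paper: connectedness via the open set of $\ep$-reachable points, a minimal-cardinality chain whose non-consecutive points must be more than $\ep$ apart (else one could shortcut), and the even-indexed points forming the required $\ep$-separated set of cardinality at least $n/2$.
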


\begin{proof} For any $z \in X$, let 
$$S(z):= \bigcup\{w \in X : \ \hbox{there exists an $\ep$-chain from $z$ to $w$}\}.$$
Then $S(z)$ is an open set, and if $S(z) \cap S(w) \neq \emptyset$, then $S(z)=S(w)$.  By connectedness, we see that $S(x)= X$.  If $x_0, \hdots, x_n$ is the $\ep$-chain from $x$ to $y$ of minimal cardinality, then $d(x_i, x_j) \geq \ep$ for all $i=0, \hdots, n-2$ and $j \geq i+2$.  This implies that the set of even-indexed points in the chain is $\ep$-separated. 
\end{proof}

\begin{proof}[Proof of Proposition \ref{cqa existence}]  The basic idea is the following.  Take an $\ep$-chain of minimal cardinality connecting $x$ to $y$.  We may use the bounded turning condition to connect consecutive points of the chain. The resulting concatenation contains an $(\ep, M)$-quasiarc connecting $x$ to $y$, for otherwise we may find a shorter $\ep$-chain.  Unfortunately, it is not true in general that this arc will be contained in a ball around $x$ with controlled radius.  To overcome this, we introduce a ``score" function on $\ep$-chains which balances distance from $x$ with cardinality. 

We now begin the formal proof. As per Remark \ref{arcs}, we may assume with out loss of generality that the bounded turning condition provides arcs rather than arbitrary continua.  

Set 
$$c = \frac{1}{1 + 8\lambda + 2(4D)^{\frac{1}{2Q}}(4\lambda)^{\frac{1}{2}}}.$$

Let $x, y \in B(z, cR)$, and set $d(x,y) = r.$ The bounded turning condition provides an arc $\gamma$ connecting $x$ to $y$ with $\diam{\gamma} \leq \lambda r$.  Let $\ep < r$.    For any $\ep$-chain $\mbf{w}$ in $X$, define the $\ep$-score function
$$\sigma_{\ep}(\mbf{w}) = \sum_{w \in \mbf{w}}1 + \left(\frac{\dist(w, \gamma)}{\ep}\right)^{2Q}.$$
  As $\gamma \subeq B(x , 2\lambda r)$ and $2 \lambda r < 4\lambda cR  < R$, the arc $\gamma$ may be covered by at most $D(4\lambda r/\ep)^Q$ balls of radius $\ep/2$.  By Lemma \ref{chains}, there is an $\ep$-chain $\mbf{w} \subeq \gamma$ connecting $x$ to $y$ containing an $\ep$-separated set of cardinality at least $\card{\mbf{w}}/2.$  Since $\ep$-separated points cannot be contained in a single $(\ep/2)$ ball, we have that  
$$\card{\mbf{w}} \leq 2D(4\lambda r/\ep)^Q.$$

Let $\mathcal{A}_{\ep}(x,y)$ be the set of all $\ep$-chains in $X$ connecting points $x,y \in X$.  Let $ \{z_0, \hdots, z_n\}=\mbf{z} \in \adm_{\ep}(x,y)$ be such that 
$$\sigma_{\ep}(\mbf{z}) \leq \inf_{\mbf{x} \in \adm_{\ep}(x,y)} \sigma_{\ep}(\mbf{x}) + 1.$$
Note that 
$$\inf_{\mbf{x} \in \adm_{\ep}(x,y)} \sigma_{\ep}(\mbf{x}) \geq 2,$$ and so we also have 
$$\sigma_{\ep}(\mbf{z}) \leq 2 \inf_{\mbf{x} \in \adm_{\ep}(x,y)} \sigma_{\ep}(\mbf{x}).$$

Set
$$H := \max_{i=0,\hdots, n} \dist(z_i, \gamma).$$
Then 
$$ \left(\frac{H}{\ep}\right)^{2Q} \leq \sigma_{\ep}(\mbf{z}) \leq 2 \sigma_{\ep}(\mbf{w}) \leq 4D\left(\frac{4\lambda r}{\ep}\right)^Q.$$ 
As a result, we have 
$$H \leq (4D)^{1/(2Q)} (4\lambda r \ep)^{1/2}.$$
Since $r\ep < (2cR)^2$,  we have for $i = 0, \hdots, n,$
$$d(z, z_i) \leq d(z, x) + \diam{\gamma} + H < cR + 2\lambda cR +  (4D)^{1/(2Q)} (4\lambda)^{1/2}2cR < R.$$
For each $i = 0,\hdots, n,$ the bounded turning condition provides an embedding $\gamma_i \colon [0,1] \to X$ connecting $z_i$ to $z_{i+1}$ with $\diam(\im\gamma_i) \leq \lambda \ep.$ 

Note that if $p \in \im\gamma_i$ for some $i =0, \hdots, n$, then
$$d(x, p) \leq \dist(p, \mbf{z}) + H + \diam(\gamma) \leq \lambda \ep +  (4D)^{\frac{1}{2Q}} (4\lambda r \ep)^{\frac{1}{2}} + \lambda r.$$
Since $\ep < r$, this implies $\im{\gamma_{i}} \subeq {B}(x, Nr)$ for $N = 2\lambda + (4D)^{\frac{1}{2Q}}(4\lambda)^{\frac{1}{2}}$.

We now make a claim which will quickly imply the desired result.  Let $i, j \in \{0, \hdots, n\}$, and suppose that  $u \in \im{\gamma_i}$ and $v \in \im{\gamma_j}$ are points such that $d(u,v) \leq \ep$. Then there is an integer $M_0$, depending only on $Q,D,$ and $\lambda$,  such that $|j-i| \leq M_0$.

Suppose that the claim is true.  We may extract an arc $\alpha$ connecting $x$ to $y$ from the image of the concatenation $\gamma_0 \cdot \hdots \cdot \gamma_n$, as in Proposition \ref{arc concatenation}.  As $\im\gamma_i \subeq B(x, Nr)$ for $i=0, \hdots, n$, we have that $\alpha \subeq B(x, Nr)$.  We now show that $\alpha$ is an $(\ep, M_0\lambda)$-quasiarc.  Let $u, v \in \alpha$ be such that $d(u,v) \leq \ep$.  We may find indices $i, j \in \{0,\hdots, n\}$ such that $u \in \im\gamma_i$, $v \in \im\gamma_j$, and $\alpha[u,v]$ is contained in $\bigcup \im(\gamma_l)$, where $l$ ranges over indices  between $i$ and $j$, inclusively.  Without loss of generality, assume $i \leq j$. The claim yields
$$\diam(\alpha[u,v]) \leq  \diam\left( \bigcup_{l=i}^j \im\gamma_l\right) \leq \sum_{l=i}^j \diam(\im(\gamma_l)) \leq M_0\lambda \ep,$$
as desired.

We now prove the claim.  Let $i, j \in \{0, \hdots, n\}$, and suppose that  $u \in \im{\gamma_i}$ and $v \in \im{\gamma_j}$ are points such that $d(u,v) \leq \ep$.  Without loss of generality, we assume that $i \leq j$.  Note that as $x \in B(z, cR)$,
$$\im(\gamma_i) \subeq B(z_i, 2\lambda \ep) \subeq B(x, (N + 2\lambda)r) \subeq B(z, R),$$
and so there is a cover of $\im\gamma_i$ by no more than $D(2\lambda)^Q$ balls of radius $\ep$.  The same holds for $\im{\gamma_j}$.  Applying Lemma \ref{chains} to $\im{\gamma_i}$ and $\im\gamma_j$ provides $\ep$-chains $\mbf{w}_u$ and $\mbf{w}_v$ connecting $z_i$ to $u$ and $v$ to $z_{j+1}$, each of cardinality no greater than $2D(2\lambda)^Q.$  It follows that
$$\mbf{w}:= \{z_0, \hdots, z_{i-1} \} \cup \mbf{w}_u \cup \mbf{w}_v \cup \{z_{j+2}, \hdots, z_{n}\}$$
is an $\ep$-chain.   

\begin{figure}[h]\label{btgood}
\begin{center}
\psfrag{U}{$u$}
\psfrag{V}{$v$}
\psfrag{zi}{$z_i$}
\psfrag{zi1}{$z_{i+1}$}
\psfrag{zj}{$z_j$}
\psfrag{zj1}{$z_{j+1}$}
\psfrag{wu}{$\mbf{w}_u$}
\psfrag{wv}{$\mbf{w}_v$}
\psfrag{gi}{$\im\gamma_i$}
\psfrag{gj}{$\im\gamma_j$}
\includegraphics[height=.35\textwidth, width=.75\textwidth]{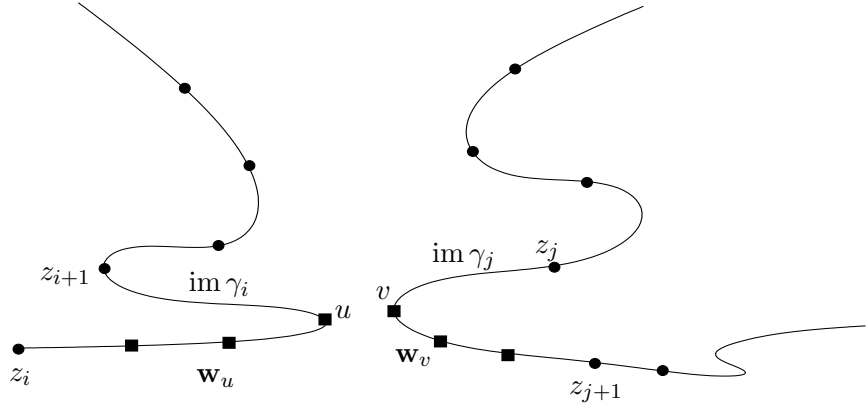}
\caption{The shortcut chains $\mbf{w}_u$ and $\mbf{w}_v$}
\end{center}
\end{figure}

We now use the inequality
$$\sigma_{\ep}(\mbf{z}) \leq \sigma_{\ep}(\mbf{w}) + 1.$$
Canceling the points where $\mbf{z}$ and $\mbf{w}$ agree, this inequality simplifies to
\begin{equation}\label{first chain ineq} 
\sum_{l=i}^{j+1} 1 + \left(\frac{\dist(z_l, \gamma)}{\ep}\right)^{2Q} \leq  \sigma_{\ep}(\mbf{w}_{u} \cup \mbf{w}_v) + 1.
\end{equation}
Let $\Theta = \dist(z_i, \gamma)/\ep,$ $m=j-i+1$, and $A=\card(\mbf{w}_u \cup \mbf{w}_v)-1$.  Then $A$ depends only on $Q,D,$ and $\lambda$.  Note that if $a$ and $b$ are consecutive points of an $\ep$-chain, then
$$\left| \frac{d(a, \gamma)}{\ep} - \frac{d(b, \gamma)}{\ep}\right| \leq 1.$$
Thus \eqref{first chain ineq} implies 
\begin{equation}\label{chain ineq} 
\sum_{l=0}^{m}1 + \left(\max\left \{ \Theta - l, 0 \right \}\right)^{2Q} \leq 1 + \sum_{l=0}^{A} 1 + (\Theta + l)^{2Q}.
 \end{equation}
 
 In order to show that $m$, and hence $j-i$, is bounded above by a constant depending only on $Q, \lambda,$ and $D$, we must analyze a few cases.  Let 
$$\Theta_0 := \sup \left \{ \Theta: 1 + (A+1)(1 + (\Theta + A)^{2Q}) - \frac{\Theta^{2Q+1} - (\Theta/2)^{2Q+1}}{2Q+1}  \geq 0 \right \}.$$
Then $\Theta_0$ is finite and depends only $Q,D$ and $\lambda$.  

\bigskip
\noindent\textit{Case 1: $\Theta \leq \Theta_0$.}  By \eqref{chain ineq}, we have 
\begin{equation}\label{case 1}
m \leq  \sum_{l = 0}^m 1 + \left(\max\left \{ \Theta - l, 0 \right \}\right)^{2Q} \leq 1+  \sum_{l=0}^{A} 1 + (\Theta+l)^{2Q} \leq 1+ (A+1)(1+ (\Theta_0+A)^{2Q}).
\end{equation}   
This provides the desired bound.

\bigskip
\noindent\textit{Case 2: $\Theta > \Theta_0$ and $m \geq \Theta/2.$}  We have
$$\sum_{l=0}^{m}1 + \left(\max\left \{ \Theta - l, 0 \right \}\right)^{2Q} \geq m + \sum_{l=0}^{\lfloor \Theta/2 \rfloor} (\Theta -l)^{2Q} \geq m+\int_{0}^{\lfloor \Theta/2 \rfloor +1} (\Theta-l)^{2Q} dl.$$
This, combined with \eqref{chain ineq}, shows that 
$$m + \frac{\Theta^{2Q+1} - (\Theta/2)^{2Q+1}}{2Q+1} \leq 1 + (A+1)(1 + (\Theta + A)^{2Q}).$$
By the definition of $\Theta_0$, this yields that $m < 0$, a contradiction.  

\bigskip
\noindent\textit{Case 3: $\Theta > \Theta_0$ and $m \leq A$.} As $A$ depends only on $Q, D$, and $\lambda$, we already have the desired bound. 

\bigskip
\noindent\textit{Case 4: $\Theta > \Theta_0$ and $A < m \leq \Theta/2.$}  Note that by definition, $A > 1$.  We have 
$$m\left(\frac{\Theta}{2} \right)^{2Q} \leq \sum_{l=0}^m 1 + \max \left\{ (\Theta -l)^{2Q}, 0 \right\},$$
as well as
$$1+ \sum_{l=0}^{A} 1 + (\Theta+l)^{2Q} \leq 1+ (A+1)(1+(\Theta+A)^{2Q}) \leq 8A(2\Theta)^{2Q}.$$
From these inequalities and \eqref{chain ineq}, we see that $m \leq 8A(4)^{2Q},$ as desired. 

 Combining these cases shows that $m$ is bounded above by a constant depending only on $Q, \lambda$, and $D$, which completes the proof of the claim.
 \end{proof}

\subsection{Quasiconvexity}

\begin{theorem}\label{quasiconvexity main work} Let $(X,d)$ be a proper metric space, and $z \in X$.  Suppose that $U \subeq X$ is a neighborhood of $z$ homeomorphic to the plane $\reals^2$ and that there are constants $C, \Lambda, M, N \geq 1$ and a radius $R>0$ such that  
\begin{itemize}
\item[(i)] $B(z, 4NR) \subeq U$ and $4NR \leq \diam(U),$
\item[(ii)] $U$ is relatively Ahlfors $2$-regular with constant $C$ (see Definition \ref{rel reg definition}),
\item[(iii)] $U$ is relatively $\Lambda$-linearly locally contractible (see Definition \ref{rel LLcont}),
\item[(iv)] for all $x,y \in U$ and $0 < \ep < d(x,y)$, there is an $(\ep, M)$-quasiarc connecting $x$ to $y$ inside of $B(x, Nd(x,y))$ (see Definition \ref{cqa def}).   
\end{itemize}
Then there exists a constant $L \geq 1$ depending only on $C, \Lambda, M,$ and $N$ such that each pair of points $x, y \in B(z, R)$ may be joined by an arc $\gamma$ in $X$ such that $\lng(\gamma) \leq L d(x,y)$.  
\end{theorem}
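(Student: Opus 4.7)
The plan is to construct a rectifiable arc from $x$ to $y$ of length at most $Ld(x,y)$ as a limit of polygonal approximations at progressively finer scales, with each approximation built from the quasiarcs provided by hypothesis (iv). The four hypotheses play complementary roles: (i) ensures enough room inside $U$ to carry out the construction; (ii) provides area bounds used to count and control pieces; (iii), via Proposition \ref{LLLC implication}, promotes $LLLC$ to a relative $LLC$ condition on $U$ used to splice pieces together; and (iv) supplies the approximating arcs at each scale.

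Let $r = d(x,y)$ and $\epsilon_k = 2^{-k} r$. For each $k$, I would first invoke (iv) to obtain an $(\epsilon_k, M)$-quasiarc $\alpha_k$ from $x$ to $y$ lying in $B(x, Nr) \subseteq U$. Along $\alpha_k$ I would select an $\epsilon_k$-chain $x = p_0^{(k)}, p_1^{(k)}, \dots, p_{n_k}^{(k)} = y$ of approximately minimal cardinality, using a penalty-minimization in the spirit of the proof of Proposition \ref{cqa existence} (to force the chain to stay in a controlled neighborhood). Then I would replace each consecutive pair $p_i^{(k)}, p_{i+1}^{(k)}$ by a continuum $\beta_i^{(k)} \subseteq U$ of diameter at most $\Lambda' \epsilon_k$, where $\Lambda'$ is the relative $LLC_1$ constant from Proposition \ref{LLLC implication}. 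The concatenation $\gamma_k := \bigcup_i \beta_i^{(k)}$ is a continuum in $B(x, (N+\Lambda')r)$ from $x$ to $y$. Using that $X$ is proper, I would pass to a subsequential Hausdorff limit $\gamma$, and by Remark \ref{arcs} extract an arc $\alpha \subseteq \gamma$ connecting $x$ to $y$.

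The crux is proving that $n_k$ can be chosen to satisfy a \emph{linear} bound $n_k \leq L' r/\epsilon_k$ uniformly in $k$, so that the total diameter-sum $n_k \cdot \Lambda'\epsilon_k$ of $\gamma_k$ stays bounded by $Lr$. A direct area count using only (ii) yields merely the quadratic bound $n_k \leq C'(r/\epsilon_k)^2$, which would blow up as $k \to \infty$ and is insufficient. The improvement has to exploit the planarity of $U$: since $U$ is homeomorphic to $\reals^2$, one can arrange the consecutive splicing continua $\beta_i^{(k)}$ to enclose essentially disjoint topological disks, after which $2$-regularity forces the desired linear estimate. This is the main obstacle—without the planar topology, an analogous bound is the content of Semmes' deep theorem \cite[Theorem B.10]{QuantTop}.

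Once the linear cardinality bound is in hand, the continua $\gamma_k$ have uniformly bounded $\mathcal{H}^1$-content, and a routine compactness-plus-lower-semicontinuity argument shows that the limit arc $\alpha$ is rectifiable with $\lng(\alpha) \leq Lr$, completing the proof with a constant $L$ depending only on $C, \Lambda, M, N$ as claimed.
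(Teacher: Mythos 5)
Your outline reduces the whole theorem to the claim that the $\epsilon_k$-chains can be chosen with the \emph{linear} cardinality bound $n_k\lesssim r/\epsilon_k$, and at exactly that point the proposal stops being an argument. A uniform linear bound at every scale is essentially equivalent to the quasiconvexity you are trying to prove (summing the steps of such chains and passing to a limit in a proper space is the easy direction), so this is the entire content, not a technical step. The mechanism you offer --- arrange the splicing continua $\beta_i^{(k)}$ to enclose essentially disjoint topological disks and let $2$-regularity ``force the desired linear estimate'' --- cannot deliver it: pairwise disjoint sets of diameter $\simeq\epsilon_k$ inside $B(x,(N+\Lambda')r)$ have total $\Hdim^2$-measure at most $\simeq r^2$, and relative $2$-regularity only guarantees each disk measure $\simeq\epsilon_k^2$ from below, so the count you recover is $n_k\lesssim (r/\epsilon_k)^2$ --- precisely the quadratic bound you already identified as useless. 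Nothing in the sketch explains how planarity improves the exponent, and no packing argument based on area alone can: to see $r/\epsilon$ rather than $(r/\epsilon)^2$ you need a codimension-one object whose length is being measured, not a count of two-dimensional pieces.

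The paper obtains that codimension-one object from the co-area inequality (Theorem \ref{co-area}) rather than from any counting scheme. One fixes a single $(\ep,M)$-quasiarc from hypothesis (iv) at the \emph{fixed} scale $\ep\simeq r/\bigl(MN\Lambda(\Lambda+1)\bigr)$, takes $E$ to be its middle portion at distance $r/8N$ from $x$ and $y$, and notes $\int_0^{r/8N}\Hdim^1(L_t)\,dt\lesssim CN^2r^2$, so for a set of $t\in[0,r/s_0]$ of measure $\geq r/2s_0$ the level set $L_t=\{w:\dist(w,E)=t\}$ satisfies $\Hdim^1(L_t)\lesssim r$ --- a genuinely linear-in-$r$ quantity. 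The real work, and the only place where planarity, the quasiarc property, and relative linear local contractibility enter, is topological: an index/homotopy argument (Lemma \ref{homotopy trick}) shows the component of $\{w\in U:\dist(w,E)>t\}$ containing $x$ is the unique one whose closure is non-compact in $U$, and from this one extracts a connected subset of $L_t$ meeting both $B(x,r/4)$ and $B(y,r/4)$; Proposition \ref{continuum length} converts the $\Hdim^1$ bound into a path of length $\lesssim r$ joining the two balls, and an iteration at scales $4^{-k}r$ closes the remaining gaps to $x$ and $y$. To salvage your chain-based plan you would need to import something playing the role of this level-set argument; as written, the key estimate is asserted rather than proved, and the suggested justification is quantitatively false.
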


\begin{remark}\label{redundant} For any pair of points $x, y \in U$,  setting $\ep = d(x,y)/2$ in assumption $(iv)$ above provides \textit{some} arc connecting $x$ to $y$ inside the ball $B(x, Nd(x,y)).$ Of course, a similar connection (with a different constant) is provided by assumption $(iii)$, as in Proposition \ref{LLLC implication}.  
\end{remark}

Proposition \ref{cqa existence}, Theorem \ref{quasiconvexity main work}, and a covering argument show the following corollary.

\begin{corollary} A proper, locally Ahlfors $2$-regular, and $LLLC$ metric space homeomorphic to a surface is locally quasiconvex.
\end{corollary}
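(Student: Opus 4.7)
The plan is to apply Theorem \ref{quasiconvexity main work} at a uniformly controlled scale around each point of a given compact set, and then handle the large-scale case by a routine covering and chaining argument.

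Fix a compact $K\subeq X$, which we may assume lies in a single connected component of $X$ (components are open by local connectivity of the surface). By properness, enlarge $K$ to a compact set $K'$ with $K\subeq\inter K'$; this gives access to constants $R_{K'},C_{K'},\Lambda_{K'}$ from the local Ahlfors $2$-regularity and $LLLC$ hypotheses (take the smaller of the two radii as $R_{K'}$). For each $z\in K$, the surface hypothesis supplies an open neighborhood $U=U_z$ of $z$ homeomorphic to $\reals^2$, and we choose a radius $R=R_z>0$ so small that $\ovl{B}(z,4NR)\subeq U\cap\inter K'$ and $\diam U \leq R_{K'}$, where $N$ will be the constant produced by Proposition \ref{cqa existence}. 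We then verify the four hypotheses of Theorem \ref{quasiconvexity main work} on $U$ with constants depending only on $C_{K'}$ and $\Lambda_{K'}$: relative Ahlfors $2$-regularity follows directly from the local condition once $R$ is small enough; the required bound on relative Assouad dimension comes from Proposition \ref{loc regular implies loc doubling}; relative linear local contractibility is immediate from $LLLC$ applied to $K'$; and the $(\ep,M)$-quasiarc condition is supplied by Proposition \ref{cqa existence}, whose bounded-turning hypothesis is in turn obtained from Proposition \ref{LLLC implication}.

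Theorem \ref{quasiconvexity main work} now produces a radius $\rho_z>0$ and a constant $L_0\geq 1$, depending only on $C_{K'}$ and $\Lambda_{K'}$, such that every pair of points in $B(z,\rho_z)$ is joined by an arc in $X$ of length at most $L_0\,d(x,y)$. By compactness, select a finite subcover $\{B(z_i,\rho_{z_i}/4)\}_{i=1}^m$ of $K$ and set $\delta=\min_i \rho_{z_i}/4>0$. For pairs $x,y\in K$ with $d(x,y)<\delta$, both points lie in a common $B(z_i,\rho_{z_i})$, and the small-scale result yields the desired arc.

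For the remaining case $d(x,y)\geq\delta$, the $L_0$-quasiconvexity on each ball of the cover allows us to chain the balls along a path between $x$ and $y$ (such a path exists since $K$ lies in one component), producing a universal upper bound $D$ on path length between any two points of $K$. Any such arc satisfies length $\leq D\leq (D/\delta)\,d(x,y)$, so we may take $L_K=\max\{L_0,D/\delta\}$ as the quasiconvexity constant for $K$. The main delicacy is the simultaneous verification of all four hypotheses of Theorem \ref{quasiconvexity main work} at a single radius with constants determined only by the data on $K'$; once this scaling is in place, both the small-scale application and the large-scale covering argument are mechanical.
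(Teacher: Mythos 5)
Your small-scale argument is exactly the paper's intended proof (the paper's entire proof of this corollary is the one-line observation that Proposition \ref{cqa existence}, Theorem \ref{quasiconvexity main work}, and a covering argument yield it), and your verification of the four hypotheses is sound, with one point to make explicit: hypothesis (iv) of Theorem \ref{quasiconvexity main work} must hold for \emph{all} pairs in $U_z$, while Proposition \ref{cqa existence} supplies quasiarcs only for pairs in $B(z,cR')$, where $B(z,R')$ is the ball on which you verify bounded turning and the doubling bound. So you must also shrink $U_z$ into $B(z,cR')$; since $M$, $N$, and $c$ depend only on the data attached to $K'$, the order of quantifiers is unproblematic, and you do flag this scaling delicacy.

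The step that would fail as written is the large-scale bound. Your finite subcover $\{B(z_i,\rho_{z_i}/4)\}$ covers only $K$, and a path from $x$ to $y$ need not stay in the union of these balls; if $K$ is disconnected (say $K=\{x,y\}$ with $d(x,y)\geq\delta$), the cover consists of small disjoint balls about $x$ and $y$ and no chain of covering balls joins them, so ``chaining the balls along a path'' does not by itself produce the claimed universal bound $D$. The repair is routine but must be stated: either enlarge $K$ to a compact \emph{connected} set (possible since $X$ is connected, proper, and locally connected) so that the union of the covering balls is open, connected, and hence path-connected, and then chain a bounded number of pairwise-intersecting covering balls; or, for a given pair, cover the compact image of a connecting path by finitely many balls on which Theorem \ref{quasiconvexity main work} applies and chain those; or observe that the small-scale result makes the intrinsic metric $d'$ finite and continuous (within a component), hence bounded on $K\times K$ by compactness, which gives $D$ directly. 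Finally, ``we may assume $K$ lies in a single component'' really amounts to assuming $X$ is connected: if $K$ met two components, no path at all would join the relevant pairs, so the statement must be read with that convention.
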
 

We recall that for any continuous map $\gamma \colon \sphere^1 \to \reals^2$ and point $z\in \reals^2\bslash \im{\gamma}$, the index $\ind(\gamma, z)$ of $\gamma$ with respect to $z$ is an integer which indicates the number of times $\gamma$ ``wraps around $z$", taking orientation into account.  For a full definition and description, see for example \cite[Chapter 4.2]{Burckel}.   The fundamental property of the index is that it  behaves well under homotopies.  If $H \colon \sphere^1 \times [0,1] \to \reals^2$ is continuous, and $z \notin \im{H}$, then $\ind(H(\cdot, 0), z) = \ind(H(\cdot, 1), z).$ Moreover, the index is additive under concatenation: if $\gamma_1, \gamma_2 \colon \sphere^1 \to \reals^2$ are loops and $z \in \reals^2$ is a point not in the image of either loop, then $\ind(\gamma_1 \cdot \gamma_2, z) = \ind(\gamma_1, z) + \ind(\gamma_2, z)$.  

Given a Jordan curve $J$ in the plane, we may find a parameterization $\gamma$ of $J$ such that 
$\ind(\gamma, z) = 1$ for all $z \in \ins(J)$ and $\ind(\gamma,z) = -1$ for all $z \in \reals^2\bslash (J \cup \ins(J)).$ On the other hand, if the image of a continuous map $\gamma \colon \sphere^1 \to \reals^2$ is an arc, then $\ind(\gamma, z) = 0 $ for all $z \notin \im(\gamma)$.  

Assumptions $(ii)$ and $(iii)$ of Theorem \ref{quasiconvexity main work} provide convenient criteria for showing that there is a controlled homotopy between certain loops.  This trick will play a role in the proof of Theorem \ref{quasiconvexity main work} as well as the rest of the proof of Theorem \ref{existence}. 

\begin{lemma}\label{homotopy trick}  Let $(X,d)$ be a metric space, and let $\Lambda \geq 1$ and $\del>0$.  Suppose that $U \subeq X$ is a subset such that for all $x \in U$ and $0 < r \leq 2\del(\Lambda+1)$ the ball $B(x,r)$ is contractible inside the ball $B(x, \Lambda r)$.  Let $\alpha$ and $\beta$ be continuous maps of $\sphere^1$ into $U$.  If there exists a cyclically ordered set $\{\theta_1, \hdots , \theta_n\} \subeq \sphere^1$ such that $d(\alpha(\theta_i),\beta(\theta_i)) \leq \del$ for $i=1,\hdots, n,$ and 
$$\max\left\{\diam(\alpha([\theta_i,\theta_{i+1}])), \diam(\beta([\theta_i, \theta_{i+1}]))\right\} \leq \del$$
for $i = 1, \hdots, n,$ $\modu n$, then there is a homotopy $H \colon \sphere^1 \times [0,1] \to \nbhd_{2\Lambda\del(\Lambda+1)}(\im{\beta})$ such that $H(\cdot, 0) = \alpha$ and $H(\cdot, 1) = \beta$.  
\end{lemma}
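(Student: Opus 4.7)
The plan is to construct $H$ piecewise on the rectangles $R_i := [\theta_i, \theta_{i+1}] \times [0,1]$ for $i = 1, \ldots, n$ (indices $\modu n$), and then glue along the shared vertical edges $\{\theta_i\} \times [0,1]$. The key input on each rectangle is that its boundary will trace a loop small enough that the linear local contractibility hypothesis produces a filling with controlled image.

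First, for each $i$, apply the contractibility hypothesis at the point $\alpha(\theta_i) \in U$ with radius $r = 2\delta$ (valid since $r \leq 2(\Lambda+1)\delta$); this yields a contracting homotopy of $B(\alpha(\theta_i), 2\delta)$ inside $B(\alpha(\theta_i), 2\Lambda\delta)$. Since both $\alpha(\theta_i)$ and $\beta(\theta_i)$ lie in $B(\alpha(\theta_i), 2\delta)$, evaluating the contraction along these two points and concatenating the resulting arcs to and from the center of contraction produces a path $\gamma_i \colon [0,1] \to B(\alpha(\theta_i), 2\Lambda\delta)$ with $\gamma_i(0) = \alpha(\theta_i)$ and $\gamma_i(1) = \beta(\theta_i)$.

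Second, for each $i$, the four paths $\alpha|_{[\theta_i, \theta_{i+1}]}$, $\gamma_{i+1}$, the reverse of $\beta|_{[\theta_i, \theta_{i+1}]}$, and the reverse of $\gamma_i$ concatenate into a loop $\ell_i \colon \partial R_i \to X$. A triangle-inequality computation from the diameter hypotheses and the bounds on $\gamma_i, \gamma_{i+1}$ shows $\im \ell_i \subseteq B(\alpha(\theta_i), (2\Lambda+1)\delta)$. Applying the contractibility hypothesis again with $r = (2\Lambda+1)\delta \leq 2(\Lambda+1)\delta$ gives a contracting homotopy $G_i$ of $B(\alpha(\theta_i), r)$ inside $B(\alpha(\theta_i), \Lambda(2\Lambda+1)\delta)$. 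Identifying $R_i$ with the cone on $\partial R_i$ and composing $\ell_i$ radially with $G_i$ — the standard way to turn a null-homotopy of a loop into a disk-filling — extends $\ell_i$ to a continuous map $H_i \colon R_i \to B(\alpha(\theta_i), \Lambda(2\Lambda+1)\delta)$.

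Finally, on the shared edge $\{\theta_i\} \times [0,1]$ the restrictions $H_{i-1}|_{\{\theta_i\} \times [0,1]}$ and $H_i|_{\{\theta_i\} \times [0,1]}$ both equal $\gamma_i$ (with the same natural parameterization), so the $H_i$ assemble into a continuous $H \colon \sphere^1 \times [0,1] \to X$ with $H(\cdot, 0) = \alpha$ and $H(\cdot, 1) = \beta$. The image of $H_i$ lies within distance $\Lambda(2\Lambda+1)\delta + \delta$ of $\beta(\theta_i) \in \im \beta$; since $\Lambda \geq 1$, the elementary estimate $\Lambda(2\Lambda+1) + 1 \leq 2\Lambda(\Lambda+1)$ gives $\im H \subseteq \nbhd_{2\Lambda\delta(\Lambda+1)}(\im \beta)$, as claimed. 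The delicate step is the cone construction on each rectangle: one must fix consistent parameterizations of $\partial R_i$ as a circle (and of $R_i$ as a cone) so that the pieces glue continuously on shared edges. With the cyclic labeling and common paths $\gamma_i$ fixed once and for all, this is routine bookkeeping rather than a genuine obstacle.
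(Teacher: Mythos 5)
Your proposal is correct and follows essentially the same route as the paper's proof: build connecting paths $\gamma_i$ from $\alpha(\theta_i)$ to $\beta(\theta_i)$, fill each rectangle $[\theta_i,\theta_{i+1}]\times[0,1]$ by applying the linear local contractibility hypothesis to its small boundary loop via a cone construction, and glue along the shared edges. The only (harmless) differences are that you extract $\gamma_i$ directly from the contraction rather than from the path-connectivity consequence the paper cites, and you center your balls at $\alpha(\theta_i)$ instead of $\beta(\theta_i)$, which changes the intermediate radii but still yields the stated bound $2\Lambda\del(\Lambda+1)$ since $\Lambda\geq 1$.
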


\begin{proof} As in the proof of Proposition \ref{LLLC implication}, we may assume that for every pair of points $x,y \in U$ with $d(x,y) < \del(\Lambda+1)$, there is a path connecting $x$ to $y$ inside $B(x, 2\Lambda d(x,y))$.  

Throughout this proof, consider the indices $\{1, \hdots, n\}$ modulo $n$.  Fix $i \in \{1, \hdots, n\}$, and define $a_i = \alpha(\theta_i)$ and $b_i=\beta(\theta_i)$. We may find a path $\gamma_i \colon [0,1] \to B(b_i, 2\Lambda \del)$ such that $\gamma_i(0)=a_i$ and $\gamma_i(1) = b_i.$  

Define a subset $L_i$ of $\sphere^1 \times [0,1]$ by $L_i = [\theta_i, \theta_{i+1}] \times [0,1]$.  The strip $L_i$ is bounded by the curve
$$l_i := ([\theta_i, \theta_{i+1}] \times \{0\}) \cup (\{\theta_{i+1}\} \times [0,1]) \cup ([\theta_i, \theta_{i+1}] \times \{1\}) \cup (\{\theta_i\} \times [0,1]).$$
Define a continuous map $g_i: l_i \to X$ by 
$$g_i(\theta, s) = \begin{cases} 
			\alpha(\theta) & (\theta, s) \in [\theta_i, \theta_{i+1}] \times \{0\}, \\
			\gamma_{i+1}(s)  & (\theta,s) \in \{\theta_{i+1} \}\times [0,1], \\ 
			\beta(\theta) & (\theta,s) \in [\theta_i, \theta_{i+1}] \times \{1\}, \\
			\gamma_{i}(s) & (\theta,s) \in \{\theta_{i} \}\times [0,1]. \\
		    \end{cases}$$
As 
$$\gamma_{i}([0,1]) \subeq B(b_i, 2\Lambda \del), \mand  \gamma_{i+1}([0,1]) \subeq B(b_{i+1}, 2\Lambda \del), $$
and 
$$ \alpha([\theta_i, \theta_{i+1}])  \subeq B(a_i, 2\del), \mand \beta([\theta_i, \theta_{i+1}]) \subeq B(b_i, 2\del),$$
we see that $\im(g_i) \subeq B(b_i, 2\del(\Lambda+1)).$  The contractibility assumption now provides a homotopy 
$$H_i \colon B(b_i, 2\del(\Lambda+1)) \times [0,1] \to B(b_i, 2\Lambda\del(\Lambda+1))$$
such that $H_i(\cdot, 0)$ is the identity map and $H_i(\cdot, 1)$ is a constant map.

\begin{figure}[h]\label{homotopy}
\begin{center}
\psfrag{0}{$0$}
\psfrag{1}{$1$}
\psfrag{ti}{$\theta_i$}
\psfrag{ti1}{$\theta_{i+1}$}
\psfrag{Li1}{$L'_i$}
\psfrag{Li}{$L_i$}
\psfrag{gi}{$\gamma_i([0,1])$}
\psfrag{gi1}{$\gamma_{i+1}([0,1])$}
\psfrag{a}{$\alpha([\theta_i, \theta_{i+1}])$}
\psfrag{b}{$\beta([\theta_i, \theta_{i+1}])$}
\psfrag{li}{$l_i$}
\psfrag{lix}{$l_i \times \{0\}$}
\includegraphics[width=.75\textwidth]{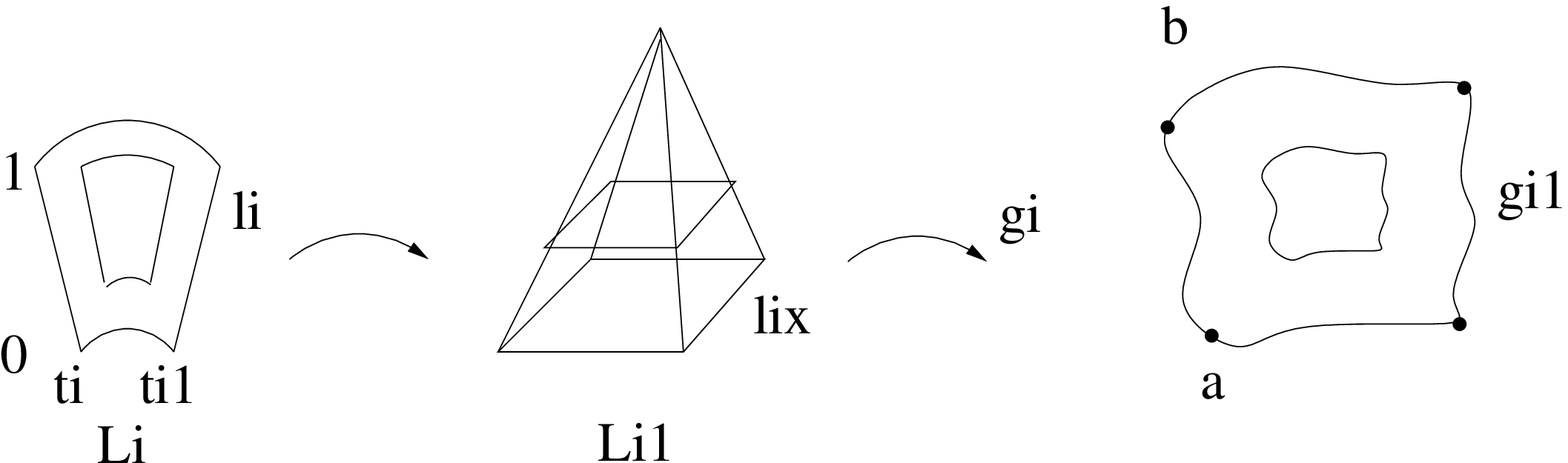}
\caption{The homotopy trick}
\end{center}
\end{figure}

Let $L'_i$ be the quotient of $l_i \times [0,1]$ obtained by identifying all points in $l_i \times \{1\}$.  Then there is a homeomorphism $f_i \colon L_i \to L'_i$ which maps $l_i$ to the copy of $l_i \times \{0\}$ in $L'_i$. Denote 
$$f_i(\theta, s) = [l(\theta, s), t(\theta, s)] \in L'_i.$$
Define $G_i \colon L_i \to X$ by 
$$G_i(\theta, s) = H_i(g_i(l(\theta, s)), t(\theta, s)) \subeq B(b_i, 2\Lambda\del(\Lambda+1)).$$
This is well defined and continuous because $H_i(\cdot, 1)$ is a constant map.  Furthermore, $G_i|_{l_i} = g_i$, since $H(\cdot, 0)$ is the identity mapping and $f_i$ maps $l_i$ to $l_i \times \{0\}$.  As a result, $G_i$ agrees with $G_{i+1}$ on $L_i \cap L_{i+1} = \{\theta_{i+1}\}\times [0,1].$  Thus we may form a continuous map $G \colon \sphere^1 \times [0,1] \to X$ by setting $G=G_i$ on $L_i$. From the definition of $G_i$ and $g_i$, we see that $G(\cdot, 0) = \alpha$ and $G(\cdot, 1) = \beta,$ as desired.
\end{proof}

Our main tool in the proof of Theorem \ref{quasiconvexity main work} is the following consequence of a co-area formula for Lipschitz maps of metric spaces.  See \cite[Prop. 3.1.5]{Ambrosio} for a proof and discussion.

\begin{theorem}\label{co-area} Let $(X,d)$ be a metric space, and let $E \subeq X$ be a continuum. For $t>0$, set 
$$E_t := \{x \in X: \dist(x,E) < t\} \quad \hbox{and} \quad L_t:=\{x \in X: \dist(x,E)=t\}.$$
There exists a universal constant $\omega$ such that if $T \geq 0$ and 
$$\int_0^T \Hdim^1(L_t) \ dt \leq \omega \Hdim^2(E_T \cup L_T).$$
\end{theorem}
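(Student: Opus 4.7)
The function $f \colon X \to [0,\infty)$ given by $f(x) = \dist(x,E)$ is $1$-Lipschitz, with level sets $f\inv(t) = L_t$ for $t > 0$ and sub-level set $f\inv([0,T]) = E_T \cup L_T$. The desired inequality is therefore precisely the Eilenberg metric co-area inequality applied to the $1$-Lipschitz map $f$ and the Borel set $A = E_T \cup L_T$, with $\omega$ the universal constant produced in the two-dimensional case. My plan is to invoke this as a black box, exactly as done in \cite[Prop.~3.1.5]{Ambrosio}.

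For the reader's convenience, here is a sketch of how one would prove the inequality directly. Fix $\del > 0$ and choose a cover $\{U_i\}$ of $A$ by sets of diameter at most $\del$ with
$$\sum_i (\diam U_i)^2 \leq C \, \Hdim^2_\del(A).$$
Because $f$ is $1$-Lipschitz, each $U_i$ meets $L_t$ only for $t$ lying in an interval of length at most $\diam U_i$, and on that interval $U_i \cap L_t$ has diameter at most $\diam U_i$. Summing over $i$ and integrating in $t$ gives
$$\int_0^T \Hdim^1_\del(L_t)\, dt \leq \sum_i (\diam U_i)^2 \leq C\, \Hdim^2_\del(A).$$
Fatou's lemma on the left and the definition of $\Hdim^2$ on the right then yield the stated inequality upon sending $\del \to 0$, with an absolute constant $\omega$ absorbing the normalization factors in Hausdorff measure.

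The geometric content is entirely captured by the one-line observation that the level sets of a $1$-Lipschitz function vary controllably with $t$; the main technical point, and the reason one reaches for \cite{Ambrosio}, is the justification of the interchange of integration and the $\del \to 0$ limit on the left-hand side. This requires the measurability of $t \mapsto \Hdim^1_\del(L_t)$ and an appropriate monotone/Fatou-type convergence statement, which is the standard subtlety in the proof of Eilenberg's inequality and the only step I would expect to require care beyond the elementary covering argument sketched above.
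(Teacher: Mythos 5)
Your proposal is correct and matches the paper's treatment: the paper gives no proof of this statement, simply citing the Eilenberg-type co-area inequality in \cite[Prop.\ 3.1.5]{Ambrosio}, which is exactly what you invoke. Your supplementary covering sketch is the standard (and correct) argument behind that reference, including the right identification of the measurability/limit-interchange step as the only delicate point.
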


A continuum $E$ in a metric space need not be locally connected, and hence is not necessarily arc-connected.  The additional condition that $\Hdim^1(E) < \infty$ provides this property.  Moreover, the arcs can be chosen so that their length is majorized by the $\Hdim^1(E)$.  See \cite[Section 15]{QuantTop} for a proof of the following well-known result.

\begin{proposition}\label{continuum length} Let $E$ be a continuum in a metric space $(X,d)$ such that $\Hdim^1(E) <\infty$, and let $x, y \in E$.  Then there exists an embedding $\gamma \colon [0,1] \to E$ connecting $x$ to $y$ such that $\lng{\gamma}\leq \Hdim^1(E).$
\end{proposition}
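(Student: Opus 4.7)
The plan is to proceed in three steps: establish that $E$ is locally connected; extract an arc $\gamma \subeq E$ from $x$ to $y$ via the Hahn--Mazurkiewicz theorem combined with Remark \ref{arcs}; and bound $\lng(\gamma)$ by $\Hdim^1(E)$ using the elementary inequality $\diam A \leq \Hdim^1(A)$ for connected sets $A$.

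For local connectedness, I would argue by contradiction. Suppose $E$ fails to be locally connected at some point $p \in E$. A standard argument in continuum theory then produces an $\ep > 0$ and an infinite sequence of pairwise disjoint nondegenerate subcontinua $K_n \subeq E$ with $\diam K_n \geq \ep$: the failure of local connectedness yields points $p_n \to p$ lying in distinct components of $E \cap B(p, \ep)$, and a boundary-bumping argument inside the compact set $E \cap \ovl{B}(p, \ep)$, together with clopen separation of distinct components, lets one extract pairwise disjoint subcontinua reaching from near $p$ out to the sphere $\{q \in X : d(p,q) = \ep\}$. For any connected set $A$ and any $a \in A$, the map $q \mapsto d(a, q)$ is $1$-Lipschitz from $A$ into $\reals$ with image containing $[0, \diam A]$, which gives $\Hdim^1(A) \geq \diam A$. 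Applying this to each $K_n$ and summing yields $\Hdim^1(E) \geq \sum_n \Hdim^1(K_n) = \infty$, contradicting the hypothesis.

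Since $E$ is now a locally connected continuum, the Hahn--Mazurkiewicz theorem provides a continuous surjection $f\colon [0,1] \to E$. Choose $s, t \in [0,1]$ with $f(s) = x$ and $f(t) = y$; restricting $f$ to the interval between $s$ and $t$ gives a path in $E$ from $x$ to $y$, and the first assertion of Remark \ref{arcs} extracts from this path an arc $\gamma \colon [0,1] \to E$ joining $x$ and $y$ with $\im \gamma$ contained in the image of the path, hence in $E$. To bound $\lng(\gamma)$, fix any subdivision $x = \gamma(t_0), \gamma(t_1), \ldots, \gamma(t_n) = y$. The subarcs $\gamma([t_{i-1}, t_i])$ are connected subsets of $E$ overlapping only on a finite, hence $\Hdim^1$-null, set, so
\begin{equation*}
\sum_{i=1}^n d(\gamma(t_{i-1}), \gamma(t_i)) \leq \sum_{i=1}^n \diam(\gamma([t_{i-1}, t_i])) \leq \sum_{i=1}^n \Hdim^1(\gamma([t_{i-1}, t_i])) \leq \Hdim^1(E).
\end{equation*}
Taking the supremum over subdivisions gives $\lng(\gamma) \leq \Hdim^1(E)$, as required. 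The only nonroutine ingredient is the local connectedness step, which is classical; the remaining pieces are formal once one has the inequality $\Hdim^1(A) \geq \diam A$ for connected $A$.
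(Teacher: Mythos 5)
Your proof is correct, and it is worth pointing out that the paper itself contains no argument for Proposition \ref{continuum length}: it simply cites \cite[Section 15]{QuantTop}, so what you have supplied is a self-contained proof of an outsourced result. Your route is the classical one (essentially Eilenberg--Harrold, or Lemma 3.12 in Falconer's book on fractal sets): finite $\Hdim^1$ forces local connectedness, Hahn--Mazurkiewicz and the arc-extraction of Remark \ref{arcs} then give an arc in $E$ from $x$ to $y$, and the length bound follows from $\diam A \leq \Hdim^1(A)$ for connected $A$ together with additivity of $\Hdim^1$ over the subarcs, which are compact (hence measurable) and meet only in finitely many points. Two places deserve tightening, though neither is a genuine gap. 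First, for a fixed base point $a$ the image of $q \mapsto d(a,q)$ is a connected subset of $[0,\infty)$ containing $[0,d(a,q')]$ for each $q' \in A$, but it need not contain $[0,\diam A]$ (consider $A=[0,1]$ and $a=1/2$); you get $\Hdim^1(A) \geq d(a,q')$ and must take a supremum over both $a$ and $q'$, or allow an $\ep$ of room, to conclude $\Hdim^1(A) \geq \diam A$. Second, in the local connectedness step it is cleaner to work with the components of the compact set $E \cap \ovl{B}(p,\ep)$ rather than of the open ball: these are automatically compact and pairwise disjoint, so the ``clopen separation'' patch is not needed, and boundary bumping applied to the open set $E \cap B(p,\ep)$ shows each such component containing some $p_n$ has diameter at least $\ep - d(p,p_n)$. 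One should also record why infinitely many distinct components arise: if infinitely many of the $p_n \to p$ lay in a single component $K$, then $p \in \ovl{K}=K$, forcing $K$ to be the component of $p$, contrary to the choice of the $p_n$; and the final contradiction only needs finite additivity, $\Hdim^1(E) \geq N\ep/4$ for every $N$. With these routine adjustments the argument is complete.
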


We will also need some facts from elementary topology.  The following statement follows from the compactness of $\sphere^1$ and the fact that a continuous map on a compact set is uniformly continuous.

\begin{proposition} \label{jordan curve segments}   Let $(X,d)$ be a metric space, and $\alpha:\sphere^1 \to X$ a continuous map.  For each $t > 0$, there exists a finite subset $\{\theta_1, \hdots, \theta_n\} \subeq \sphere^1$ in cyclic order such that 
$\diam(\alpha([\theta_i, \theta_{i+1}])) \leq t $ for $i = 1,\hdots, n \modu{n}$. 
\end{proposition}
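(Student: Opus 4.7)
The plan is to reduce the claim to uniform continuity of $\alpha$, which is the content of the hint already provided in the excerpt. The only subtlety is translating between ``small $\sphere^1$-distance implies small image distance'' (the direct output of uniform continuity) and the slightly stronger ``small diameter of an arc'' statement that we need; fortunately this translation is immediate because the arcs $[\theta_i, \theta_{i+1}]$ can be chosen with arbitrarily small $\sphere^1$-diameter.

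First I would fix $t > 0$. Since $\sphere^1$ is compact and $\alpha$ is continuous, $\alpha$ is uniformly continuous, so I may select $\delta > 0$ such that
$$d(\alpha(\phi_1), \alpha(\phi_2)) \leq t \quad \text{whenever} \quad d_{\sphere^1}(\phi_1, \phi_2) \leq \delta.$$
Next, I would choose $n \in \nats$ large enough that $2\pi/n \leq \delta$, and set $\theta_i = 2\pi(i-1)/n$ for $i = 1, \ldots, n$, which is a cyclically ordered set in $\sphere^1$.

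For each $i = 1, \ldots, n \modu n$, the arc $[\theta_i, \theta_{i+1}] \subeq \sphere^1$ has $\sphere^1$-diameter at most $2\pi/n \leq \delta$, so any two points $\phi_1, \phi_2 \in [\theta_i, \theta_{i+1}]$ satisfy $d_{\sphere^1}(\phi_1, \phi_2) \leq \delta$ and hence $d(\alpha(\phi_1), \alpha(\phi_2)) \leq t$. Taking the supremum over such pairs yields $\diam(\alpha([\theta_i, \theta_{i+1}])) \leq t$, as required.

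There is essentially no obstacle; the only thing to be careful about is that the uniform continuity estimate controls distances between images of pairs of points, which automatically controls the diameter of the image of the whole subarc because $[\theta_i, \theta_{i+1}]$ has small $\sphere^1$-diameter in its entirety, not merely at its endpoints.
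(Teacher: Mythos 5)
Your proof is correct and follows exactly the route the paper indicates: the paper gives no separate argument beyond remarking that the statement follows from compactness of $\sphere^1$ and uniform continuity of $\alpha$, which is precisely what you carry out by taking equally spaced points $\theta_i$ with spacing below the uniform continuity scale $\delta$. The small additional care you take in passing from endpoint distances to the diameter of $\alpha([\theta_i,\theta_{i+1}])$ is exactly the right observation and closes the argument.
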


By Schoenflies' Theorem, a Jordan curve in the plane is the topological boundary of its inside.  The inside of a Jordan curve can be used to approximate a compact, connected subset of the plane \cite[Page 347]{Burckel}

\begin{proposition}\label{Jordan curve approximation} Let $K$ be a compact and connected subset of $\reals^2$.  For any open set $V \supeq K$, there is a Jordan curve $\alpha$ such that $K \subeq \ins{\alpha}$ and $\alpha \subeq K.$
\end{proposition}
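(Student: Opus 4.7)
The plan is to obtain $\alpha$ as the outer boundary of a small polygonal neighborhood of $K$ built from a square grid. (I am reading the containment ``$\alpha \subeq K$'' in the statement as a typo for $\alpha \subeq V$, since otherwise $\alpha \subeq K \subeq \ins{\alpha}$ is contradictory.)

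First I would use the compactness of $K$ and the openness of $V$ to choose $\delta>0$ small enough that the closed $3\delta$-neighborhood $\ovl{\nbhd}_{3\delta}(K)$ is contained in $V$. Next, tile $\reals^2$ by a grid of closed axis-aligned squares of side length $\delta$, and let $U$ be the union of all closed squares of this grid that meet $K$. Then $U$ is a compact polygonal set, $K$ lies in the topological interior of $U$, and $U \subeq \nbhd_{2\delta}(K) \subeq V$. Because $K$ is connected, a single connected component $U_0$ of $U$ contains $K$; the set $U_0$ is again a compact, connected, polygonal subset of $V$ whose interior contains $K$.

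Now let $W$ be the unique unbounded connected component of $\reals^2 \bslash U_0$, and set $\alpha := \partial W$. Since $U_0$ is a finite union of closed squares glued along edges, $\alpha$ is a polygon whose edges belong to the grid. The key step is to verify that $\alpha$ is actually a \emph{Jordan} polygon, i.e., a simple closed curve: at each grid vertex lying in $\alpha$ one has at most two edges of $\alpha$ because from the viewpoint of the unbounded component $W$, the local boundary situation is controlled (if three or four edges met at a vertex of $\alpha$, one could perturb $U_0$ slightly at that vertex by removing a small corner square without disconnecting $U_0$, contradicting the fact that the components around the vertex belong to $W$). This local analysis, together with the fact that $\alpha$ is the boundary of the unbounded component of the complement of a compact connected set, yields that $\alpha$ is a single Jordan curve and that $\ins(\alpha) = \reals^2 \bslash W$.

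By construction $\alpha \subeq U_0 \subeq V$, and since $K \subeq U_0 = \reals^2 \bslash W \supeq \ins(\alpha) \cup \alpha$ while $K$ is disjoint from the boundary $\alpha$ (because $K$ lies in the interior of $U_0$), we conclude $K \subeq \ins(\alpha)$, as required. The main obstacle, as indicated above, will be the combinatorial verification that the outer boundary of the polygonal region $U_0$ is a single Jordan polygon rather than a bouquet of loops meeting at grid vertices; this can either be handled by the corner-smoothing argument sketched above, by refining the grid once so that any offending vertex is resolved, or by appealing to the cited Burckel reference which gives the statement directly from general plane topology.
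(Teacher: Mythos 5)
Your reading of the misprint is correct ($\alpha \subeq K$ should be $\alpha \subeq V$), and the grid strategy is a perfectly reasonable route; the paper itself offers no argument and simply cites Burckel for this statement. The genuine gap in your sketch is exactly at its crux: the claim that a vertex of $\alpha$ with three or four incident boundary edges leads to a contradiction. The offending configuration is a vertex $v$ at which the two grid squares belonging to $U_0$ are the two \emph{diagonal} ones, $A$ and $B$, while the other two, $C$ and $D$, are absent. This configuration is perfectly compatible with $U_0$ being a compact, connected union of closed grid squares: take $U_0$ to be exactly two closed squares sharing the corner $v$; its outer boundary is a figure-eight, not a Jordan curve, and removing a small corner square near $v$ \emph{does} disconnect it. So no argument using only the connectedness and polygonality of $U_0$, such as the perturbation you describe, can rule this out; you must use how $U_0$ was produced from the connected set $K$. (Likewise, "refining the grid once" has no evident reason to destroy all diagonal contacts.)

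Here is the missing idea that closes the gap. At such a vertex, $C\cap K=\emptyset$ forces $v\notin K$, and since $K$ is connected and meets $A\bslash\{v\}$ and $B\bslash\{v\}$, points $a\in\inter A$ and $b\in\inter B$ near $v$ can be joined by a path in $U_0\bslash\{v\}$. If the interiors of $C$ and $D$ near $v$ were in the same component of $\reals^2\bslash U_0$, one could form a Jordan curve through $v$ meeting $U_0$ only at $v$ (two short radial segments from $v$ into $\inter C$ and $\inter D$, closed up by an arc in $\reals^2\bslash U_0$); locally at $v$ this curve separates the $A$-quadrant from the $B$-quadrant, so it separates $a$ from $b$, while the path above avoids it entirely --- contradicting the Jordan curve theorem. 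Hence at most one of the two missing quadrants lies in the unbounded component $W$, and a short check of the remaining vertex configurations shows $\partial W$ contains exactly zero or two grid edges at every vertex; being a finite graph with all degrees $2$ it is a disjoint union of Jordan polygons, and its connectedness (the boundary of a complementary component of a continuum is connected --- the fact recorded in Proposition \ref{connected complement}, which you are using implicitly and should invoke explicitly) makes it a single Jordan curve. Alternatively, you may do honest corner surgery (e.g.\ adjoin a small square centered at each bad vertex) instead of arguing by contradiction, but then the degree count and the containments $K\subeq\ins(\alpha)$, $\alpha\subeq V$ must be re-verified for the modified region; as written, neither alternative is carried out, and the contradiction argument you do sketch is not valid.
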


The final topological fact we need is also well-known.  Here we consider the sphere $\sphere^2$ as the one point compactification of $\reals^2$, with the added point labeled $\infty$.

\begin{proposition}\label{connected complement} Let $V$ be an open and connected subset of $\reals^2$ such that $\reals^2 \bslash V$ is a continuum.  Then $V \cup \{\infty\} \subeq \sphere^2$ is homeomorphic to the plane.  Moreover, the topological boundary of $V$ in $\reals^2$ is connected.  
\end{proposition}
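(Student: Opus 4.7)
Set $K := \reals^2 \bslash V$. Since $K$ is a continuum, it is compact, so $V$ contains the complement of some large ball; in particular $V$ is unbounded and $\infty$ is a limit point of $V$ in $\sphere^2$. Hence $W := V \cup \{\infty\}$ is a connected open subset of $\sphere^2$ with complement $\sphere^2 \bslash W = K$. For the first claim, the strategy is to show that $W$ is simply connected, after which the classical fact that any simply connected proper open subset of $\sphere^2$ is homeomorphic to $\reals^2$ delivers the required homeomorphism; most concretely, fixing any point $p \in K$ one views $W$ as a simply connected proper open subset of $\sphere^2 \bslash \{p\} \cong \reals^2$ and applies the Riemann mapping theorem. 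Simple connectivity of $W$ itself is a standard consequence of the topological criterion that a connected open subset of $\sphere^2$ is simply connected if and only if its complement in $\sphere^2$ is connected, which holds here because $K$ is connected.

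For the second claim, I would first identify $\partial V$ with the topological boundary of $W$ in $\sphere^2$. Since $V$ is unbounded, $\overline{W}^{\sphere^2} = \overline{V}^{\reals^2} \cup \{\infty\}$, and so
$$\partial_{\sphere^2} W = \overline{W}^{\sphere^2} \bslash W = \overline{V}^{\reals^2} \bslash V = \partial V.$$
Equivalently, $\partial V$ is the common frontier in $\sphere^2$ of the complementary closed connected sets $\overline{W}$ and $K$. Connectivity of $\partial V$ is then the classical plane topology statement that the frontier of every complementary domain of a continuum in $\sphere^2$ is connected. The cleanest proof applies the Mayer--Vietoris sequence for \v{C}ech cohomology to the closed cover $\sphere^2 = \overline{W} \cup K$, whose intersection is $\partial W$:
$$0 \to \check{H}^0(\sphere^2) \to \check{H}^0(\overline{W}) \oplus \check{H}^0(K) \to \check{H}^0(\partial W) \to \check{H}^1(\sphere^2) = 0.$$
Connectivity of $\overline{W}$ and $K$ makes the middle term rank-two and the leftmost map a rank-one injection, forcing $\check{H}^0(\partial W) \cong \mathbb{Z}$; hence $\partial V$ is connected.

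The main subtlety lies in the second claim: since $K$ may have nonempty interior, $\partial V$ can be a proper subset of $K$, so its connectedness is not inherited automatically from that of $K$. The Mayer--Vietoris argument handles this by exploiting the connectedness of both complementary pieces $\overline{W}$ and $K$ together with the vanishing of $\check{H}^1(\sphere^2)$. The first claim, in contrast, is a routine invocation of well-known facts about simply connected planar domains once $V$ has been compactified by adding $\infty$.
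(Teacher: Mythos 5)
Your proof is correct, but it follows a genuinely different route from the paper's. The paper disposes of the first statement by quoting Proposition \ref{complement prop} (the general surface-theoretic result, proven later with Alexander duality and covering-space arguments), and for the second statement simply cites an exercise in Burckel together with the Riemann mapping theorem. You instead argue directly in the plane: for the first claim you pass to $W = V \cup \{\infty\}$, invoke the classical criterion that a connected open subset of $\sphere^2$ is simply connected if and only if its complement is connected (which applies since the complement of $W$ is exactly the continuum $K = \reals^2 \bslash V$, and $K$ having at least two points makes $W$ a proper simply connected subdomain of $\sphere^2 \bslash \{p\} \cong \reals^2$, so the Riemann mapping theorem finishes); for the second claim you correctly identify $\partial_{\sphere^2} W$ with $\partial V$ and run a Mayer--Vietoris argument in \v{C}ech cohomology for the closed cover $\sphere^2 = \cl(W) \cup K$, using $\check{H}^1(\sphere^2) = 0$ and connectedness of both $\cl(W)$ and $K$ to force $\check{H}^0(\partial W) \cong \mathbb{Z}$ --- in effect a proof of the unicoherence of $\sphere^2$, which is exactly the classical fact the paper outsources to Burckel. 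You are also right to flag the subtlety that $K$ may have interior, so connectedness of $\partial V$ is not inherited from $K$; your use of \v{C}ech (rather than singular) cohomology is the correct choice since $\partial V$ can be wild. What the paper's route buys is economy --- Proposition \ref{complement prop} is needed anyway for the surface case, so the planar statement comes for free; what your route buys is a self-contained, elementary-in-spirit argument that does not depend on the later, heavier proposition, at the cost of importing the simple-connectivity criterion and the closed-cover Mayer--Vietoris sequence as known tools.
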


\begin{proof}  The first statement is an immediate corollary of Proposition \ref{complement prop}, which is proven independently.  The second statement is elementary; it follows, for example, from \cite[Exercise 1.5]{Burckel} and the Riemann mapping theorem. 
\end{proof}

\begin{proof}[Proof of Theorem \ref{quasiconvexity main work}]
Let $x, y \in B(z, R)$, and set $r = d(x,y)$, and 
\begin{equation} \label{qconvex ep def} \ep = \frac{r}{32MN\Lambda(\Lambda+1)}.\end{equation}
Since $\ep < r$, there is an $(\ep, M)$-quasiarc $\gamma$ connecting $x$ to $y$ inside of $B(x, Nr) \subeq U.$ 

Let $\ul{\gamma}\colon [0,1] \to X$ be an embedding parameterizing $\gamma$. Define
$$a = \max\{s \in [0,1]: \ul{\gamma}(s) \in \ovl{B}(x, r/8N) \} \mand b=\min\{s \in  [a,1]: \ul{\gamma}(s) \in \ovl{B}(y, r/8N)\}.$$
Define $E = \ul{\gamma}([a,b]).$ Since $d(x,y) = r > r/8N$, we see that $a < 1$. As $\gamma$ is connected, this implies that $d(\gamma(a), x)= r/8N$.  Thus we have $\dist(x,E) = r/8N$.  Similarly $\dist(y, E) = r/8N$.  For $t> 0$, set
$$E_t := \{x \in X: \dist(x,E) \leq t\} \quad \hbox{and} \quad L_t:=\{x \in X: \dist(x,E)=t\}.$$ Then  
$$E_{r/8N} \cup L_{r/8N} \subeq B(x, 2Nr).$$

\begin{figure}[h]\label{quasiconvex1}
\begin{center}
\psfrag{r8n}{$r/8N$}
\psfrag{E}{$E$}
\psfrag{x}{$x$}
\psfrag{y}{$y$}
\psfrag{Lt}{$L_t$}
\includegraphics[width=.75\textwidth]{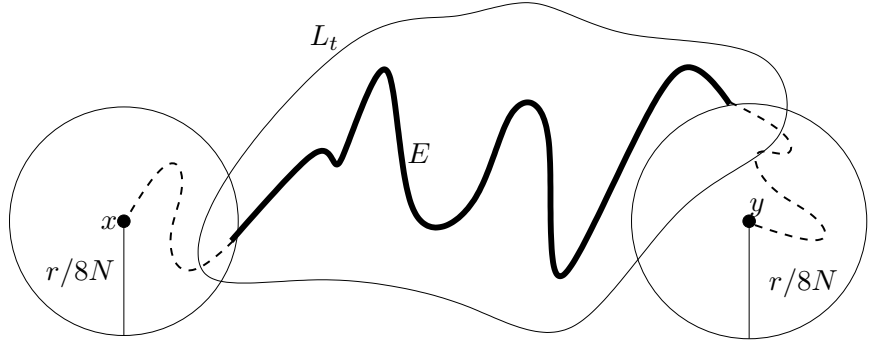}
\caption{Connecting $B(x, r/8N)$ and $B(y, r/8N)$}
\end{center}
\end{figure}

Since $x \in U$, and $2Nr \leq 4NR \leq \diam(U)$, assumptions $(i)$ and $(ii)$ along with Theorem \ref{co-area} yield 
$$\int_0^{r/8N} \Hdim^1(L_t) \ dt \leq \omega\Hdim^2(B(x, 2Nr)) \leq 4C\omega N^2r^2.$$
From this we may estimate that for any $s>8N$,  
\begin{equation}\label{measure estimate}
|\{ t\in [0, r/s] : \Hdim^1(L_t) < 8C\omega N^2sr\}| \geq \frac{r}{2s}. 
\end{equation}

We claim that there is a constant $s_0>8N$, depending only on  $\Lambda, M,$ and $N$, such that for all $t \in [0, r/s_0]$, there is a connected subset of $L_t$ which intersects both $B(x,r/4)$ and $B(y, r/4).$  

Suppose that the claim is true.  The measure estimate \eqref{measure estimate} ensures that there is some $t_0 \in [0, r/s_0]$ such that the level set $L_{t_0}$ satisfies $\Hdim^1(L_{t_0}) \leq 8C\omega N^2 s_0 r.$  Since $X$ is proper, $L_{t_0}$ is compact, and so the closure of the connected subset of $L_{t_0}$ intersecting both $B(x,r/4)$ and $B(y, r/4)$ is a continuum of controlled $\Hdim^1$-measure.  Proposition \ref{continuum length} now provides a path connecting the balls of length $Lr$, where $L$ depends only on $C, \Lambda, M,$ and $N$. The desired path connecting $x$ to $y$ is now easily constructed using an inductive process; see \cite[Lemma 3.4]{ConfDim} for more details.  

We proceed with the proof of the claim.  Let 
$$s_0 = 5r/\ep = 160\Lambda MN(\Lambda+1),$$ and let $t \in [0, r/s_0].$  Consider the set 
$$F_t:= \{w \in U: d(w, E) > t\}.$$
 As $t < r/8N$ and $d(x, E)= r/8N$, the point $x$ must belong to some component $A$ of $F_t$.  

Since $E_t \cup L_t$ is bounded and $X$ is proper, the set $U \bslash F_t$ is compact.  It follows from the fact that $U$ is one-ended that there is a unique component of $F_t$ whose closure is not a compact subset of $U$.  We will first show that $A$ must be this component.  Towards a contradiction, suppose that $\cl{A}$ is a compact subset of $U$.  Then by Proposition \ref{Jordan curve approximation}, there exists a embedding $\alpha:\sphere^1 \to U$ such that $\im(\alpha) \subeq \nbhd_{t/N}(A)$ and $A$ is contained in the inside of the Jordan curve $\im(\alpha).$  

By Proposition \ref{jordan curve segments}, there is a cyclically ordered set $\{\theta_1,\hdots,\theta_n\} \subeq \sphere^1$ such that 
\begin{equation}\label{alpha diam 1}
\diam( \alpha([\theta_i, \theta_{i+1}]) \leq t
\end{equation}
for $i=1, \hdots, n$, $\modu{n}$.  Without loss of generality, we may assume that 
$$0 = \theta_1 < \hdots < \theta_n < 2\pi.$$

\begin{figure}[h]\label{quasiconvex2}
\begin{center}
\psfrag{E}{$E$}
\psfrag{x}{$x$}
\psfrag{Lt}{$L_t$}
\psfrag{ai}{$\alpha(\theta_i)$}
\psfrag{ei}{$e_i$}
\psfrag{lt}{$< 2t$}
\includegraphics[width=.75\textwidth]{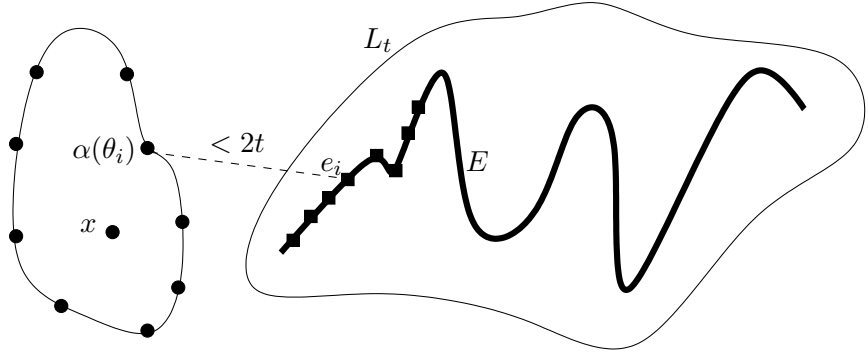}
\caption{The points $\alpha(\theta_i)$ and their partners $e_i$}
\end{center}
\end{figure}

If $w \in \im(\alpha)$, then there is a point $a \in A$ such that $d(w,a) < t/N$.  By Remark \ref{redundant} there is an arc connecting $w$ to $a$ inside $B(w, t)$.   As $a\in A$, $w \notin A$, and $A$ is a component of $F_t$, this path must intersect $L_t$. This implies that  $\dist(w, E) < 2t.$  Thus for each parameter $\theta_i$, we may find a point $e_i \in E$ such that $d(\alpha(\theta_i), e_i) < 2t.$  Note that the set $\{e_1, \hdots, e_n\}$ need not be linearly ordered with respect to the given parameterization $\ul{\gamma}$.  For $i=1,\hdots, n$, set $t_i = \ul{\gamma}\inv(e_i) \in [a,b]$.  For $i=1, \hdots, n-1$, define $\rho_i:[\theta_i, \theta_{i+1}] \to \reals$ to be the unique linear map satisfying $\rho_i(\theta_i) = t_i$ and $\rho_i(\theta_{i+1})=t_{i+1}$, and set $\rho_n: [0, 2\pi-\theta_n] \to \reals$ to be the unique linear map satisfying $\rho_n(0)=t_n$ and $\rho_n(2\pi-\theta_n) = t_1$.  Define $f\colon \sphere^1 \to E$ by 
$$f(\theta) = \begin{cases}
		\ul{\gamma} \circ \rho_i(\theta) &  \theta \in [\theta_i, \theta_{i+1}], i =1, \hdots, n-1,\\
		\ul{\gamma} \circ \rho_n(\theta - \theta_n) & \theta \in [\theta_n, 2\pi). \\
		\end{cases}$$ 
Then for $i= 1,\hdots ,n$ $f|_{[\theta_i, \theta_{i+1}]}$ is an embedding parameterizing $E[e_i, e_{i+1}].$  Thus $\im{f} \subeq E$.  Since $E$ is an arc and $x \notin \im{f}$, this implies that $\ind(f, x) = 0$.  On the other hand, $\ind(\alpha, x) \neq 0$, since $x \in A \subeq \ins(\im\alpha).$ 

To reach a contradiction, we will apply Lemma \ref{homotopy trick} to $\alpha$ and $f$, using $\del = M\ep.$  By the definition of $\ep$, we have 
$$2M\ep(\Lambda+1) = \frac{r}{16\Lambda N}\leq \frac{R}{8\Lambda N} \leq \diam(U).$$
Thus by assumption $(iii)$, each ball $B(x, \rho)$ with $x \in U$ and $\rho \leq 2M\ep(\Lambda+1)$ is contractible inside $B(x, \Lambda \rho).$  This verifies the first hypothesis of Lemma \ref{homotopy trick}.   Consider  the points $\{\alpha(\theta_1), \hdots, \alpha(\theta_n)\} \subeq \im{\alpha}$ and $\{e_1, \hdots, e_n\} \subeq \im{f}$.  We have already seen that $d(\alpha(\theta_i), e_i) < 2t < \ep$ for $i =1,\hdots, n$, and that 
$$\diam( \alpha([\theta_i, \theta_{i+1}]) \leq t \leq \ep$$
for $i=1,\hdots, n$, $\modu n$.  This implies that $d(e_i, e_{i+1}) < 5t \leq \ep$ for $i=1, \hdots, n$, $\modu n$.  Since $\gamma$ is an $(\ep, M)$-quasiarc, 
$$\diam(f([\theta_i, \theta_{i+1}])) \leq M\ep.$$
These statements verify the remaining hypotheses of Lemma \ref{homotopy trick}, and we conclude that $f$ is homotopic to $\alpha$ inside $\nbhd_{2M\Lambda \ep(\Lambda+1)}(\im{f}).$  Since $\im{f} \subeq E$, our choice of $\ep$ and the fact that $\dist(x, E) = r/8N$ show that the tracks of the homotopy do not hit $x$.  This contradicts that $\ind(f, x) \neq \ind(\alpha, x)$.  Thus we have shown that the $x$-component of $F_t$, which we have called $A$, is the unique component of $F_t$ whose closure is not a compact subset of $U$.  
 
 Let $\{W_i\}_{i \in I}$ be the collection of components of $U \bslash A$.  Since $E \subeq U\bslash A$ is connected, there is one such component $W_{i_0}$ which contains $E$.  Note that $A$ is open in $U$ since $U$ is locally connected and $F_t$ is open \cite[Theorem 25.3]{Munkres}.  Thus $U\bslash A$ is closed in $U$, and so $W_{i_0}$ is closed in $U$ as well, as it is a component of a closed set.  In fact, $W_{i_0}$ is a compact subset of $U$, as follows. Since $X$ is proper and $U$ is homeomorphic to the plane, there is a compact set $K \subeq U$ such that $U\bslash K$ is connected and $E_t \cup L_t \subeq K$.  Then $U \bslash K$ must be contained in some component of $F_t$.  Since $U\bslash K$ does not have compact closure in $U$, we must have $U\bslash K \subeq A$.  Thus $W_{i_0} \subeq K$, which implies that $W_{i_0}$ is compact.  

Let 
$$V = A \cup \{W_i\}_{i \neq i_0} = U \bslash W_{i_0}.$$
Since $A$ is connected, each $W_i$ is connected, and $\cl(A) \cap W_i \neq \emptyset$ for each $i$, we see that $V$ is connected.  Let $U \cup \{\infty\}$ denote the one-point compactification of $U$, which is homeomorphic to the sphere $\sphere^2$.  Since $W_{i_0}$ is a continuum, Lemma \ref{connected complement} implies that $V \cup \{\infty\}$, considered as a subset of $U \cup \{\infty\}$, is homeomorphic to the plane, and that the topological boundary of $V$ in $U$ is connected.  

For the remainder of this proof only, for any subset $S \subeq U$, we denote the topological closure of $S$ in $U$ by $\cl(S)$ and the topological boundary of $S$ in $U$ by 
$$\partial{S} = \cl(S) \cap \cl(U \bslash S).$$  

  
We now claim that $\partial{V} \subeq L_t$.  Since $V=U \bslash W_{i_0}$, we see that $\partial{V} = \partial{W_{i_0}}$.  As $W_{i_0}$ is a component of $U\bslash A$, we have 
$$\partial{V} = \partial{W_{i_0}} \subeq \partial{A}.$$ 
Since $A$ is a component of $F_t$, it is relatively closed in $F_t$, and so $\partial{A} \cap F_t = \emptyset.$  However, the continuity of the distance function implies that $\partial{A} \cap E_t = \emptyset,$ and so $\partial{V} \cap U\subeq \partial{A} \cap U \subeq L_t$.  

Since $d(x, E) = r/8N$, Remark \ref{redundant} provides a path from $x$ to $E$ inside $B(x, r/4)\subeq U$. Since $x$ is in $A$ and $E \subeq W_{i_0}$, this path must intersect $\partial{V}$ at some point $x'$.  Similarly there is a point $y' \in B(y, r/4) \cap \partial{V}$.  This shows that $\partial{V}$ is a connected subset of $L_t$ which connects $B(x, r/4)$ to $B(y, r/4)$, proving the claim, and completing the proof.
\end{proof}

The measure estimate \eqref{measure estimate} actually allows us to prove more than just local quasiconvexity.   We refer to \cite{Acta} and \cite{LAMS} for the definition of the modulus of a curve family and the basic facts regarding modulus, Loewner spaces, and Poincar\'e inequalities. 

\begin{corollary}\label{modulus cor} Assume the hypotheses of Theorem \ref{quasiconvexity main work}.  There is a constant $L \geq 0$ depending only on $C, \Lambda, M,$ and $N$ such that the following statement holds.  Let $x,y \in B(z, R)$ and set $r=d(x,y)$. Then the $2$-modulus of the path family connecting $B(x, r/4)$ to $B(y, r/4)$ is at least $L$.  
\end{corollary}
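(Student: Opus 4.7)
The plan is to reuse the geometric framework constructed in the proof of Theorem \ref{quasiconvexity main work}. Fix the quasiarc $\gamma$, the subarc $E \subseteq \gamma$, the constant $s_0$ depending on $\Lambda,M,N$, and the distance function $u(\cdot) = \dist(\cdot, E)$ with its level sets $L_t$ as in that proof. Recall that the claim established there asserts that for every $t \in [0, r/s_0]$, there is a connected subset of $L_t$ meeting both $B(x, r/4)$ and $B(y, r/4)$, namely the component $\partial V$ appearing at the end of that argument. Set
$$G := \{ t \in [0, r/s_0] : \Hdim^1(L_t) < 8C\omega N^2 s_0 r\}.$$
By the measure estimate \eqref{measure estimate}, $|G| \geq r/(2 s_0)$. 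For $t \in G$, the connected subset of $L_t$ has finite $\Hdim^1$-measure, so Proposition \ref{continuum length} produces an arc $\alpha_t \subseteq L_t$ connecting a point $x' \in B(x, r/4)$ to a point $y' \in B(y, r/4)$.

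Let $\Gamma$ denote the family of rectifiable paths joining $B(x, r/4)$ to $B(y, r/4)$, and let $\rho$ be any Borel function admissible for $\Gamma$. Each $\alpha_t$ with $t \in G$ lies in $\Gamma$, so
$$\int_{L_t} \rho \, d\Hdim^1 \;\geq\; \int_{\alpha_t} \rho \, ds \;\geq\; 1.$$
Integrating over $G$ and using the Eilenberg-type coarea inequality for the $1$-Lipschitz function $u$ (the weighted version of Theorem \ref{co-area}) gives
$$\frac{r}{2s_0} \;\leq\; |G| \;\leq\; \int_G \int_{L_t} \rho \, d\Hdim^1 \, dt \;\leq\; \omega \int_{u^{-1}(G)} \rho \, d\Hdim^2 \;\leq\; \omega \int_{E_{r/s_0}} \rho \, d\Hdim^2.$$

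Since $E \subseteq B(x, Nr)$, we have $E_{r/s_0} \subseteq B(x, (N+1)r)$, and the hypothesis $(N+1)r \leq 4NR \leq \diam(U)$ together with the relative Ahlfors $2$-regularity of $U$ yields $\Hdim^2(E_{r/s_0}) \leq C_1 r^2$ with $C_1$ depending only on $C$ and $N$. The Cauchy--Schwarz inequality then gives
$$\frac{r}{2s_0} \;\leq\; \omega\, \bigl(C_1 r^2\bigr)^{1/2} \Bigl(\int_X \rho^2 \, d\Hdim^2\Bigr)^{1/2},$$
so
$$\int_X \rho^2 \, d\Hdim^2 \;\geq\; \frac{1}{4 s_0^2 \omega^2 C_1}.$$
Taking the infimum over admissible $\rho$ bounds the $2$-modulus of $\Gamma$ below by a constant depending only on $C, \Lambda, M,$ and $N$, completing the proof.
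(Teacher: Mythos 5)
Your proof is correct and follows the paper's argument in all essentials: the same level sets $L_t$ of the distance to $E$, the same measure estimate \eqref{measure estimate} producing the good set of parameters, the same extraction of rectifiable arcs in the level sets via Proposition \ref{continuum length}, and the same weighted co-area inequality. The only (harmless) deviation is the final estimate: you apply the co-area inequality to $\rho$ itself and then use Cauchy--Schwarz in space together with the bound $\Hdim^2(E_{r/s_0}) \leq C(N+1)^2r^2$ coming from relative Ahlfors $2$-regularity, whereas the paper applies it to $\rho^2$ and uses H\"older along each curve $\gamma_t$ with the length bound built into the definition of $\mathcal{G}$; both routes yield a lower bound for the $2$-modulus depending only on $C, \Lambda, M,$ and $N$.
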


\begin{proof}  Let $s_0$ be as in the proof of Theorem \ref{quasiconvexity main work}, and set 
$$\mathcal{G} = \{t \in [0, r/{s_0}] : \Hdim^1(L_t) \leq 8C\omega N^2s_0r\}.$$
The measure estimate \eqref{measure estimate} shows that 
$$\Hdim^1(\mathcal{G}) \geq \frac{r}{2s_0}.$$
By the proof of Theorem \ref{quasiconvexity main work} and the arc extraction discussed in Remark \ref{arcs}, for each $t \in \mathcal{G}$ we may find an arc $\gamma_t \subeq L_t$ connecting $B(x, r/4)$ to $B(y,r/4)$.  Thus it suffices to show that the $2$-modulus of the family $\{\gamma_t: t\in \mathcal{G}\}$ is bounded away from zero.  

We seek a lower bound for 
$$\inf \int_{X} \rho^2 \ d\Hdim^2,$$
where the infimum is taken over all Borel measurable functions $\rho\colon X \to [0, \infty]$ such that for all $t \in \mathcal{G}$
\begin{equation}\label{modulus assumption} \int_{\gamma_t}\rho \ ds \geq 1.\end{equation}

We claim that the following weighted co-area inequality holds:
\begin{equation}\label{weighted co-area} \int_{\mathcal{G}}\int_{\gamma_t} \rho^2 \ d\Hdim^1d\Hdim^1 \leq \omega \int_X \rho^2 \ d\Hdim^2.\end{equation}
If $\rho^2$ is the characteristic function of a $\Hdim^2$-measurable subset $A \subeq X$, then \eqref{weighted co-area} follows from the usual co-area inequality given in Theorem \ref{co-area} applied to the metric space $X \cap A$.  Linearity of the integral then shows that \eqref{weighted co-area} holds if $\rho^2$ is a simple function, and the standard limiting argument using the monotone convergence theorem shows that \eqref{weighted co-area} holds in the desired generality.

Thus, applying \eqref{weighted co-area} and H\"older's inequality,
$$\int_{X} \rho^2 \ d\Hdim^2 \geq \omega\inv \int_{\mathcal{G}}\frac{1}{\Hdim^1(\gamma(t))}\left(\int_{\gamma_t}\rho \ d\Hdim^1\right)^2 \ d\Hdim^1.$$
Since $\gamma_t$ is an arc, 
$$\int_{\gamma_t} \rho \ d\Hdim^1 =  \int_{\gamma_t} \rho \ ds \geq 1,$$
and so by the definition of $\mathcal{G}$ and the measure estimate for $\mathcal{G}$, we have
$$\int_{X} \rho^2 \ d\Hdim^2 \geq \omega\inv \int_{\mathcal{G}}\frac{1}{8C\omega N^2s_0r} \ d\Hdim^1 \geq \frac{1}{16C\omega^2N^2s_0^2} =: L.$$
\end{proof}

\begin{remark} Arguing as in \cite[Proposition 3.1]{ConfDim}, Corollary \ref{modulus cor} implies that a locally Ahlfors $2$-regular and $LLLC$ metric space $(X,d)$ homeomorphic to a surface is locally $2$-Loewner.  As in \cite[Section 5]{Acta}, this implies that $(X,d)$ locally supports a weak $(1,2)$-Poincar\'e inequality.  Using the techniques of \cite[Theorem 5.9]{Acta}, Corollary \ref{modulus cor} could probably be improved to show that $(X,d)$ locally supports a weak $(1,1)$-Poincar\'e inequality, which would be the full local analog of Semmes result \cite[Theorem B.10]{QuantTop} in dimension $2$.
\end{remark}

\subsection{Construction of quasicircles}
\indent

In this section, we construct chord-arc loops (and hence quasicircles) at specified scales under quite general hypotheses.  

\begin{theorem}\label{quasicircle}
Let $(X,d)$ be a proper metric space, and $z \in X$.  Suppose that $U \subeq X$ is a neighborhood of $z$ and that there is a radius $R_0>0$ and constants $\Lambda, D, L \geq 1$ and $Q \geq 0$ such that  
\begin{itemize}
\item[(i)] $U$ is homeomorphic to the plane $\reals^2$,
\item[(ii)] $B(z, R_0) \subeq U$ and $R_0 \leq \diam(U)$,
\item[(iii)] $U$ has  relative Assouad dimension at most $Q$, with constant $D$,
\item[(iv)] $U$ is relatively $\Lambda$-linearly locally contractible, 
\item[(v)] each pair of points $x, y \in B(z, R_0)$ may be connected by a path of length at most $L d(x,y)$.
\end{itemize}
Then there exist constants $\lambda, C_1, C_2 \geq 1$, depending only on $\Lambda,D,Q$ and $L$, such that if  $R \leq R_0/C_1$, there is a $\lambda$-chord-arc loop $\gamma\colon \sphere^1 \to B(z, R_0)$ with $\ind(\gamma, z)\neq 0$ and 
\begin{equation}\label{gamma inclusions} \frac{R}{C_2} \leq \dist(z, \im(\gamma)) \leq C_2R, \mand \frac{R}{C_2} \leq \diam(\im(\gamma)) \leq C_2R.\end{equation}
\end{theorem}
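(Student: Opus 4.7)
The plan follows the outline in the introduction: (a) reduce to the geodesic case via hypothesis (v); (b) build an initial loop $\gamma_0$ of nonzero index around $z$ by discrete methods; and (c) minimize length among loops of nonzero index in a controlled annulus to produce a chord-arc loop. By (v), the length metric $d_\ell$ on $B(z, R_0)$ is $L$-bi-Lipschitz to $d$, so I may pass to $d_\ell$ and assume that $(X,d)$ is geodesic in a neighborhood of $z$, absorbing $L$ into the other constants; hypotheses (iii) and (iv) persist up to modification of constants.

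\textbf{Discrete construction of $\gamma_0$.} Fix $R \leq R_0/C_1$ for $C_1$ to be chosen, set $\ep = cR$ for a small $c = c(\Lambda, D, Q)$, and pick a maximal $\ep$-separated set $\mathcal{P} \subeq \ovl{B}(z, R) \bslash B(z, R/2)$; hypothesis (iii) gives $\card(\mathcal{P}) = O(1)$ depending only on $Q, D$. By relative linear local contractibility (iv) and Proposition \ref{LLLC implication}, any pair $x_i, x_j \in \mathcal{P}$ with $d(x_i, x_j) \leq 3\ep$ is joined by an arc $\alpha_{ij} \subeq U$ of diameter at most $3\Lambda\ep$. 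Let $W$ be the component of $U \bslash \bigcup_i \ovl{B}(x_i, 2\ep)$ containing $z$; arguing as in the proof of Theorem \ref{quasiconvexity main work}, $\cl(W)$ is a compact subset of $U$. Proposition \ref{Jordan curve approximation} provides a Jordan curve $\til{\alpha}$ with $z \in \ins(\til{\alpha})$ in any prescribed neighborhood of $\partial W$, so $\ind(\til{\alpha}, z) \neq 0$. Piecing together the arcs $\alpha_{ij}$ along the combinatorial boundary of $W$ produces a rectifiable loop $\gamma_0\colon \sphere^1 \to U$ of length $O(R)$ lying in a controlled annulus around $z$. Applying Lemma \ref{homotopy trick} to $\gamma_0$ and $\til{\alpha}$ with $\del$ of order $\ep$ shows that these loops are freely homotopic in $U \bslash \{z\}$, so $\ind(\gamma_0, z) = \ind(\til{\alpha}, z) \neq 0$.

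\textbf{Extremal problem.} Choose $C_3 = C_3(\Lambda, D, Q)$ so that $\gamma_0$ lies in $A' := \ovl{B}(z, C_3 R) \bslash B(z, R/C_3)$, and let $\mathcal{F}$ be the family of rectifiable loops $\sigma\colon \sphere^1 \to A'$ with $\ind(\sigma, z) \neq 0$. Parameterize competitors with constant speed; any minimizing sequence has length bounded by $\lng(\gamma_0) = O(R)$ and is therefore uniformly equicontinuous. Properness of $X$ and the Arzel\`a-Ascoli theorem yield a uniformly convergent subsequential limit $\gamma\colon \sphere^1 \to A'$. Lower semicontinuity of length and stability of the index under uniform convergence (for loops avoiding $z$) place $\gamma$ in $\mathcal{F}$ and show that $\gamma$ realizes the infimum; in particular $\lng(\gamma) = O(R)$.

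\textbf{Minimizers are chord-arc.} Let $\gamma$ be such a length minimizer, take $\theta, \phi \in \sphere^1$ with subarcs $J_1, J_2 \subeq \sphere^1$ between them chosen so that $\lng(\gamma|_{J_1}) \leq \lng(\gamma|_{J_2})$, and set $p = \gamma(\theta)$, $q = \gamma(\phi)$. Suppose, for contradiction, that $\lng(\gamma|_{J_1}) > \lambda d(p, q)$ for a sufficiently large $\lambda = \lambda(\Lambda, L, C_3)$, and let $\beta$ be a geodesic from $p$ to $q$ of length $d(p,q) < \lng(\gamma|_{J_1})/\lambda$. For $\lambda$ large, the inclusion $\beta \subeq B(p, d(p,q)) \subeq A'$ is forced: otherwise $d(p, q)$ would be comparable to $R$ and $\lng(\gamma|_{J_1}) > \lambda d(p, q)$ would exceed $\lng(\gamma) = O(R)$. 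Form two loops $\eta_1, \eta_2 \colon \sphere^1 \to A'$ by concatenating $\gamma|_{J_1}$ (respectively $\gamma|_{J_2}$) with the reverse of $\beta$ (respectively with $\beta$). The additivity of index gives $\ind(\eta_1, z) + \ind(\eta_2, z) = \ind(\gamma, z) \neq 0$, so at least one $\eta_i$ belongs to $\mathcal{F}$; for $\lambda > L$, this $\eta_i$ has length strictly less than $\lng(\gamma)$, contradicting minimality. The size conditions \eqref{gamma inclusions} follow from $\gamma \subeq A'$ together with length-minimality, which forbids $\gamma$ from being contained in a strict subannulus of $A'$. The principal obstacle is the simultaneous calibration of the constants $C_1, C_2, C_3, \lambda$ in terms of $\Lambda, D, Q, L$ so that the shortcut $\beta$ provably stays in $A'$ and so that the discrete loop $\gamma_0$ genuinely has nonzero index around $z$; once these are in place, the remainder is additivity of the planar index together with the extremality of $\gamma$.
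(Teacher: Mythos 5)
Your first two steps (passing to the length metric via (v), and building a short discrete loop of nonzero index via a separated net, Proposition \ref{Jordan curve approximation}, and Lemma \ref{homotopy trick}) are in the spirit of the paper's Lemma \ref{first polygon}, though sketchier. The genuine gap is in your extremal problem. You minimize plain length over loops of nonzero index subject to the \emph{hard constraint} $\sigma \subeq A' = \ovl{B}(z, C_3R)\bslash B(z, R/C_3)$, and your chord-arc argument needs the geodesic shortcut $\beta$ from $p=\gamma(\theta)$ to $q=\gamma(\phi)$ to remain in $A'$ so that the competitor $\eta_i$ is admissible. But a constrained length minimizer has every incentive to hug, and in general will touch, the inner level set $\{x : d(x,z)=R/C_3\}$: shorter loops of nonzero index lie closer to $z$. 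At such a point $p$ the claimed inclusion $\beta \subeq B(p, d(p,q)) \subeq A'$ is false for \emph{every} choice of constants, since $B(p,s)$ meets $B(z, R/C_3)$ for all $s>0$; no calibration of $C_1, C_2, C_3, \lambda$ repairs this. Worse, the level set $\{d(\cdot,z)=R/C_3\}$ is merely a level set of the distance function and need not be chord-arc under hypotheses (i)--(v), so a minimizer running along it can genuinely fail the chord-arc condition, with all shortcuts blocked by the constraint. This is exactly the obstacle you flag at the end, but it is a structural defect of the hard-constraint formulation, not a bookkeeping issue.

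The paper's device is to replace the constraint by a weight: it minimizes $\sigma(\gamma)=\int_\gamma \rho\, ds$ with $\rho(x)=\bigl(R/\dist_{d'}(z,x)\bigr)^2+1$ over \emph{all} rectifiable loops in $B_0\bslash\{z\}$ of nonzero index. The annular confinement is then derived rather than imposed: the upper bound on $\dist(z,\im\gamma)$ comes from the length bound plus the contractibility/index argument (as in Lemma \ref{dist diam}), and the lower bound $d_0 \gtrsim R$ comes from the blow-up of the weight near $z$. With no inner boundary present, the shortcut geodesic between nearby points stays at distance $\gtrsim R$ from $z$, so $\rho$ is bounded above on it and below (by $1 + (R/D_0)^2$) on the discarded subarc, and comparing $\sigma$-values yields the chord-arc inequality. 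If you want to keep your outline, you must either adopt such a soft penalty or otherwise rule out contact of the minimizer with the inner constraint boundary; as written, the minimizer you produce need not be a chord-arc loop. (Separately, your lower bound on $\diam(\im\gamma)$ in \eqref{gamma inclusions} should be argued via index plus relative contractibility as in Lemma \ref{dist diam}, not via ``length-minimality forbids a strict subannulus''.)
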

 
The rest of this section will consist of the proof of this theorem, and so throughout we let $(X,d)$, $U$, $z$, etc., be as in the hypotheses of Theorem \ref{quasicircle}.  The construction has two steps.  First, we create a polygon with a controlled number of vertices surrounding $z$ at the correct scale.  We then minimize a functional on loops surrounding $z$.  The minimizer will be a chord-arc loop.

For $a, b \in X$, define 
$$d'(a,b)=\inf \lng_{d}(\gamma),$$
where the infimum is taken over all paths $\gamma \colon [0,1] \to X$ with $\gamma(0)=a$ and $\gamma(1)=b$.  Assumption $(v)$ implies that $d'$ is a metric on $B_d(z, R_0)$, and shows that if $a, b \in B_d(z, R_0)$, then
\begin{equation}\label{path metric ineq}
d(a,b) \leq d'(a,b) \leq Ld(a,b).
\end{equation}
Note that the first inequality in \eqref{path metric ineq} is valid for all points $a, b \in X$.  Thus if $x \in B_d(z, R_0/2)$, and $0<r \leq R_0/2$, we have 
\begin{equation}\label{path metric inclusions}
B_d\left(x , \frac{r}{L} \right) \subeq B_{d'}(x, r) \subeq B_{d}(x, r) \subeq B_d(z, R_0).
\end{equation}

As $(X,d)$ is proper, the ball $\ovl{B}_d(z, 2LR_0)$ is compact.  Thus for any pair of points $x, y \in B_d(z, R_0)$, there is a (not necessarily unique) path in $X$ whose $d$-length is $d'(x,y)$ \cite[2.5.19]{Burago}.  The following lemma shows that such a path is a geodesic in the $d'$-metric.  

\begin{lemma}\label{d vs d'} Let $\gamma\colon [0,1] \to X$ be a path such that 
$$\lng_{d}(\gamma) = d'(\gamma(0), \gamma(1)).$$
Then $\lng_{d'}(\gamma) = d'(\gamma(0), \gamma(1))$ as well.
\end{lemma}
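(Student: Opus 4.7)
The proof is essentially a manipulation of two very general inequalities, plus the optimality hypothesis on $\gamma$. My plan is to show the two inequalities
$$d'(\gamma(0),\gamma(1)) \leq \lng_{d'}(\gamma) \leq \lng_{d}(\gamma),$$
and observe that the optimality hypothesis forces both endpoints of this chain to coincide.

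The upper bound is the only thing that needs any real argument, and it rests on the definition of $d'$ as an infimum of $d$-lengths. Fix any partition $0 = t_0 < t_1 < \cdots < t_n = 1$ of $[0,1]$. For each $i$, the restriction $\gamma|_{[t_{i-1},t_i]}$ is itself a candidate path joining $\gamma(t_{i-1})$ to $\gamma(t_i)$, so by the definition of $d'$,
$$d'(\gamma(t_{i-1}),\gamma(t_i)) \leq \lng_d(\gamma|_{[t_{i-1},t_i]}).$$
Summing over $i$ and using additivity of the $d$-length under concatenation yields
$$\sum_{i=1}^n d'(\gamma(t_{i-1}),\gamma(t_i)) \leq \sum_{i=1}^n \lng_d(\gamma|_{[t_{i-1},t_i]}) = \lng_d(\gamma).$$
Taking the supremum over all partitions gives $\lng_{d'}(\gamma) \leq \lng_d(\gamma)$. (One should note that this computation also implicitly shows that $\gamma$, which is $d$-continuous, is $d'$-continuous on compact subintervals; the hypothesis $\lng_d(\gamma)= d'(\gamma(0),\gamma(1)) < \infty$ ensures every subpath has finite $d$-length, so each $d'(\gamma(t_{i-1}),\gamma(t_i))$ is finite and uniform continuity in $d'$ follows from the same bound applied to small subintervals.)

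The lower bound is immediate from the general fact that the length of a path in a metric space is at least the distance between its endpoints:
$$\lng_{d'}(\gamma) \geq d'(\gamma(0), \gamma(1)).$$
Combining the two bounds with the hypothesis $\lng_d(\gamma) = d'(\gamma(0),\gamma(1))$ gives
$$d'(\gamma(0),\gamma(1)) \leq \lng_{d'}(\gamma) \leq \lng_d(\gamma) = d'(\gamma(0),\gamma(1)),$$
so equality holds throughout, which is the conclusion of the lemma. There is no substantial obstacle here; the only point requiring attention is the implicit continuity/finiteness check in the first step, which is handled by the same partition-sum estimate.
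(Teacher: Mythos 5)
Your proof is correct, and it is a bit more streamlined than the one in the paper, though both hinge on the same basic inequality $d'(\gamma(t_{i-1}),\gamma(t_i)) \leq \lng_d(\gamma|_{[t_{i-1},t_i]})$ coming from the definition of $d'$ as an infimum. The paper first proves, by a cut-and-paste contradiction (replacing a subpath by a shorter competitor and concatenating), that every subpath of $\gamma$ is itself $d$-minimizing, i.e.\ $\lng_d(\gamma|_{[s,t]}) = d'(\gamma(s),\gamma(t))$ for all $s,t$; with that \emph{equality} in hand, every partition sum for $\lng_{d'}(\gamma)$ equals $\lng_d(\gamma)$ exactly, and the conclusion follows by taking the supremum. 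You instead keep only the one-sided inequality, sum it to get the general fact $\lng_{d'}(\gamma) \leq \lng_d(\gamma)$ (valid for \emph{any} rectifiable path, with no minimality assumption), pair it with the trivial bound $\lng_{d'}(\gamma) \geq d'(\gamma(0),\gamma(1))$, and invoke the hypothesis only at the very end to squeeze. What the paper's route buys is the stronger intermediate statement that $\gamma$ restricted to any subinterval is again a $d'$-geodesic, which is in the spirit of how such segments are used later; what your route buys is economy and generality (in fact, combined with $d' \geq d$ your partition estimate shows $\lng_{d'} = \lng_d$ for all paths), and your parenthetical remark about $d'$-continuity of $\gamma$ via the arclength function addresses a small point the paper passes over silently.
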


\begin{proof} We first claim that for all $s, t \in [0,1]$, we have 
$$\lng_d(\gamma|_{[s,t]}) = d'(\gamma(s), \gamma(t)).$$
The definition of $d'$ shows that 
$$\lng_d(\gamma|_{[s,t]}) \geq d'(\gamma(s), \gamma(t)).$$
If this inequality is is strict, then there is a path $\beta$ in $X$ connecting $\gamma(s)$ to $\gamma(t)$ with 
$$\lng_d(\beta) < \lng_d(\gamma|_{[s,t]}).$$
This implies that 
$$\lng_d(\gamma|_{[0,s]} \cdot \beta \cdot \gamma|_{[t,1]}) < \lng_d(\gamma) = d'(\gamma(0), \gamma(1)).$$
This is a contradiction. 

We now prove the lemma. Suppose that $0 = t_1 < \hdots < t_n =1$ is a partition of $[0,1]$.  Then by the claim,
$$\sum_{i = 1}^{n-1} d'(\gamma(t_i), \gamma(t_{i+1})) = \sum_{i=1}^{n-1} \lng(\gamma|_{[t_i, t_{i+1}]}) = \lng(\gamma).$$
Since this is true for each partition, it is true for the supremum over all partitions.  The lemma follows.
\end{proof}

We denote the image of a $d'$-geodesic connecting points $x$ and $y$ by $[x, y]$, following the conventions for geodesics laid out in Section \ref{defs}.

Let $$B_0 = \ovl{B}_{d'}\left(z, \frac{R_0}{16\Lambda L}\right);$$
our construction will take place inside this set.  Note that by \eqref{path metric inclusions}, $B_0 \subeq B_d(z, R_0/2) \subeq U$.  Since $U$ is connected, it follows that 
$$\frac{R_0}{16\Lambda L} \leq \diam_{d'}(B_0) \frac{R_0}{8 \Lambda L}.$$

\begin{lemma}\label{relative properties} In the $d'$-metric, the set $B_0$ is relatively $\Lambda L$-linearly locally contractible, and has relative Assouad dimension at most $Q$ with constant $DL^Q$. 
\end{lemma}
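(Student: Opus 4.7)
The plan is to derive both statements by transferring the corresponding $d$-properties of $U$ through the bi-Lipschitz-type comparison \eqref{path metric ineq} between $d$ and $d'$, keeping careful track of where \eqref{path metric inclusions} is applicable. Throughout, I will exploit the fact that $B_0$ sits well inside $B_d(z, R_0/2)$ and has $d'$-diameter at most $R_0/(8\Lambda L)$, which gives plenty of room for inclusions like $B_d(x, \Lambda r) \subeq B_d(z, R_0)$ to hold for all relevant $x$ and $r$.

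For the relative linear local contractibility, fix $x \in B_0$ and $0 < r \leq \diam_{d'}(B_0)$. Since $d \leq d'$ we have $B_{d'}(x, r) \subeq B_d(x, r)$, and since $r \leq R_0/(8\Lambda L) \leq \diam_d(U)$, assumption $(iv)$ of Theorem \ref{quasicircle} gives a contraction of $B_d(x, r)$ inside $B_d(x, \Lambda r)$. Because $\Lambda L r \leq R_0/2$ and $x \in B_d(z, R_0/2)$, the left-hand inclusion of \eqref{path metric inclusions} applied at scale $\Lambda L r$ yields $B_d(x, \Lambda r) \subeq B_{d'}(x, \Lambda L r)$, so the homotopy lives in the required $d'$-ball. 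This gives $\Lambda L$-relative local contractibility of $B_0$ in the $d'$-metric.

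For the relative Assouad dimension estimate, again fix $x \in B_0$, $0 < r \leq \diam_{d'}(B_0)$, and $0 < \ep \leq 1/2$. Set $\ep' = \ep/L \leq 1/2$. By assumption $(iii)$ of Theorem \ref{quasicircle}, the $d$-ball $B_d(x, r)$ can be covered by at most $D(\ep')^{-Q} = DL^Q \ep^{-Q}$ $d$-balls $B_d(y_i, \ep' r)$; discarding any that miss $B_d(x, r)$, one checks that each such $d$-ball lies inside $B_d(z, R_0)$, so $d' \leq L d$ applies on it and yields $B_d(y_i, \ep' r) \subeq B_{d'}(y_i, \ep r)$. Since $B_{d'}(x, r) \subeq B_d(x, r)$, the resulting family of $d'$-balls of radius $\ep r$ covers $B_{d'}(x, r)$, giving the desired bound with constant $DL^Q$.

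The only delicate point — and the ``obstacle,'' such as it is — is bookkeeping: one has to verify that every application of \eqref{path metric inclusions} and of $d' \leq L d$ takes place inside $B_d(z, R_0)$, where $d'$ is defined and the comparison with $d$ is valid. The choice of the radius $R_0/(16\Lambda L)$ in the definition of $B_0$ is engineered precisely so that all intermediate balls (up to the expansion factor $\Lambda L$) stay well within this set, so no additional geometric input beyond the two inequalities in \eqref{path metric inclusions} and \eqref{path metric ineq} is required.
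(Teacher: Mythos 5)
Your proposal is correct and follows essentially the same route as the paper: transfer the relative contractibility of $U$ via the inclusions \eqref{path metric inclusions} (contracting $B_{d'}(x,r)\subeq B_d(x,r)$ inside $B_d(x,\Lambda r)\subeq B_{d'}(x,\Lambda Lr)$), and obtain the doubling bound by covering $B_d(x,r)$ with $DL^Q\ep^{-Q}$ $d$-balls of radius $\ep r/L$ whose centers are checked to lie where $d'\leq Ld$ applies. The bookkeeping you flag (all balls staying inside $B_d(z,R_0/2)$, resp.\ $B_d(z,R_0)$) is exactly the verification the paper carries out.
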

\begin{proof} Let $x \in B_0$, and let $r \leq \diam_{d'}(B_0) \leq R_0/8\Lambda L$. In particular, this implies that $x \in U$ and $r\leq \diam_{d}(U)$. 

We first show that $B_{d'}(x, r)$ contracts inside $B_{d'}(x, {\Lambda} Lr).$  Since $U$ is relatively ${\Lambda} $-linearly locally contractible, the ball $B_d(x, r)$ contracts inside $B_d(x, {\Lambda} r)$.  Since ${\Lambda} Lr \leq R_0/2$, we may apply \eqref{path metric inclusions} to see that $B_d(x, {\Lambda} r) \subeq B_{d'}(x, {\Lambda} Lr).$ Thus $B_{d}(x, r)$ contracts inside $B_{d'}(x, {\Lambda} Lr).$ Since $B_{d'}(x,r) \subeq B_d(x,r)$, this suffices.

Let $0 <\ep \leq 1/2$; we now show that the ball $B_{d'}(x, r)$ can be covered by a controlled number of $d'$-balls of radius $\ep r.$  Since $U$ is relatively doubling of dimension $Q$ with constant $D$, the ball $B_d(x,r)$ may be covered by at most $DL^Q\ep^{-Q}$ balls $\{B_i = B_d(x_i, \ep r/L)\}_{i \in I}$.  We may assume that for each $i \in I$, $d(x_i, x) \leq r+\ep r/L$, for otherwise $B_i \cap B_d(x, r) = \emptyset.$  Then for each $i \in I$,
$$d(x_i,z) \leq d(x_i, x) + d(x, z) \leq 2r + \frac{R_0}{16{\Lambda} L} \leq \frac{R_0}{2}.$$ 
Since $\ep r \leq R_0/2$, the inclusions \eqref{path metric inclusions} imply that $B_i \subeq B_{d'}(x_i, \ep r)$ for each $i \in I$.  Since $B_{d'}(x, r) \subeq B_{d}(x,r),$ this completes the proof. 
\end{proof}

The assumption of linear local connectivity implies that a loop which stays far away from a given point either has large diameter or has index zero with respect to that point.

\begin{lemma}\label{dist diam}  Let $a >0$, and let $0 < R \leq R_0/16a$.  Suppose that $\alpha \colon \sphere^1 \to B_0$ is a continuous map with $\ind(\alpha, z) \neq 0$ and $\dist_{d'}(z, \im(\alpha))\geq aR.$  Then $\diam_{d'}(\im \alpha) \geq aR/{\Lambda} L.$
\end{lemma}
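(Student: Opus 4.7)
The plan is to argue by contradiction via the homotopy invariance of the index at $z$: a loop of small $d'$-diameter whose image is far from $z$ can be contracted without the tracks of the homotopy meeting $z$, which forces its index at $z$ to vanish.

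Assume, towards a contradiction, that $\diam_{d'}(\im \alpha) < aR/\Lambda L$, and fix any point $p \in \im \alpha$. Then $\im \alpha \subeq B_{d'}(p, aR/\Lambda L)$. The hypothesis $R \leq R_0/16a$ yields $aR/\Lambda L \leq R_0/(16\Lambda L) \leq \diam_{d'}(B_0)$, so Lemma \ref{relative properties} applied to the point $p \in B_0$ and the radius $aR/\Lambda L$ provides a null-homotopy $H \colon \sphere^1 \times [0,1] \to B_{d'}(p, aR)$ with $H(\cdot, 0) = \alpha$.

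It remains to verify that the tracks of $H$ miss $z$ and lie in $U$, so that $H$ transports via the homeomorphism $U \cong \reals^2$ to a null-homotopy of the image loop inside $\reals^2 \setminus \{z\}$, contradicting $\ind(\alpha, z) \neq 0$. That $z \notin B_{d'}(p, aR)$ follows from $d'(z, p) \geq \dist_{d'}(z, \im \alpha) \geq aR$. For the inclusion $B_{d'}(p, aR) \subeq U$: every $q \in B_{d'}(p, aR)$ satisfies $d(z, q) \leq d(z, p) + d(p, q) < R_0/2 + aR \leq R_0$ by \eqref{path metric ineq} and the containment $B_0 \subeq B_d(z, R_0/2)$, hence $q \in B_d(z, R_0) \subeq U$. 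I don't expect any serious obstacle; the argument is essentially a bookkeeping check on the scale inequalities to confirm that the contracting ball both admits Lemma \ref{relative properties} and lies in $U \setminus \{z\}$.
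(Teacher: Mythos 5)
Your proof is correct and follows essentially the same route as the paper: assume $\diam_{d'}(\im\alpha) < aR/\Lambda L$, use the relative $\Lambda L$-linear local contractibility of $B_0$ from Lemma \ref{relative properties} to contract $\alpha$ inside $B_{d'}(p, aR)$, and note the tracks miss $z$ since $d'(z,p) \geq aR$, contradicting $\ind(\alpha,z) \neq 0$. Your extra check that the contracting ball lies in $U$ is a harmless bit of bookkeeping the paper leaves implicit.
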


\begin{proof}
 Suppose that $\diam_{d'}(\im \alpha) < aR/{\Lambda} L.$  Let $x \in \im\alpha$; then we have $\im \alpha \subeq B_{d'}(x, aR/{\Lambda} L).$  Then $x \in B_0$, and the upper bound on $R$ implies that $aR/{\Lambda} L \leq \diam_{d'}(B_0)$.  By Lemma \ref{relative properties}, $B_0$ is relatively ${\Lambda} L$-linearly locally contractible, and so $\alpha$ is homotopic to a point inside of $B_{d'}(x, aR).$ But by assumption $d'(z, x) \geq aR$, which implies that the tracks of the homotopy do not meet $z$.  This is a contradiction with the fact that $\ind(\alpha, z)\neq 0$.  
\end{proof}

We now begin the construction of the polygon discussed above.

\begin{lemma}\label{first polygon}  Let $0 < R \leq R_0/48{\Lambda} L.$ Then there exists a path $\beta \colon \sphere^1 \to B_0$ and a constant $C_0$ such that the following statements hold:

\begin{itemize}
\item[(i)] $\ind(\beta, z) \neq 0.$
\item[(ii)] $R/2 \leq \dist_{d'}(z,\im{\beta}) \leq 3R,$ 
\item[(iii)] $\lng_{d'}(\beta) \leq C_0R.$
\end{itemize}
The constant $C_0$ depends only on $D, Q, {\Lambda} ,$ and $L$.
\end{lemma}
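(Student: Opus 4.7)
The plan is to construct $\beta$ as a polygon: a concatenation of $d'$-geodesic arcs joining consecutive vertices of a finite discrete set in an annulus about $z$. The nonzero index will be inherited from a continuous Jordan curve around $z$ via the homotopy trick (Lemma \ref{homotopy trick}).

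First I would produce a surrounding Jordan curve. By \eqref{path metric ineq}, the metrics $d$ and $d'$ are bi-Lipschitz equivalent on $B_d(z, R_0)$, so $\ovl{B}_{d'}(z, 2R)$ is a compact connected subset of the open set $B_{d'}(z, 3R) \subeq B_0$. Transferring to the plane via the homeomorphism $U \cong \reals^2$, Proposition \ref{Jordan curve approximation} yields a Jordan curve $\alpha_0 \colon \sphere^1 \to U$ whose image lies in $B_{d'}(z, 3R) \bslash \ovl{B}_{d'}(z, 2R)$ and which has $z$ in its inside, so $\ind(\alpha_0, z) \neq 0$.

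Second, I would discretize. Fix $\eta = R/N$ for a large integer $N$ depending only on $\Lambda$ and $L$, to be chosen at the end, and take a maximal $\eta$-separated set $S$ in $A := \{x \in B_0 : R \leq d'(z, x) \leq 3R\}$. By Lemma \ref{relative properties}, the set $B_0$ is relatively doubling in the $d'$-metric with constants depending only on $D, Q, L$, which bounds $|S|$ by a constant $C_1 = C_1(D, Q, \Lambda, L, N)$. By uniform continuity of $\alpha_0$, I would pick cyclically ordered parameters $\theta_1, \ldots, \theta_k \in \sphere^1$ with $k \leq C_1$ so that $\diam_{d'}(\alpha_0([\theta_j, \theta_{j+1}])) \leq \eta$ for each $j$. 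Maximality of $S$ then provides $s_j \in S$ with $d'(s_j, \alpha_0(\theta_j)) \leq \eta$.

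Third, define $\beta$ as the concatenation of $d'$-geodesics $[s_j, s_{j+1}]$ (indices mod $k$), parameterized so that $\beta(\theta_j) = s_j$; these geodesics exist by properness of $(X,d)$. The triangle inequality gives $d'(s_j, s_{j+1}) \leq 3\eta$, so $\lng_{d'}(\beta) \leq 3k\eta \leq 3C_1R/N =: C_0R$, establishing (iii). For (ii), each point on $[s_j, s_{j+1}]$ lies within $3\eta$ of $s_j \in A$, so $R - 3R/N \leq d'(z, x) \leq 3R + 3R/N$ for all $x \in \im(\beta)$; taking $N$ sufficiently large yields $R/2 \leq d'(z, x) \leq 3R$, which is (ii). For (i), I would apply Lemma \ref{homotopy trick} to the pair $(\alpha_0, \beta)$ at the parameters $\{\theta_j\}$ with $\delta = 3\eta$: Lemma \ref{relative properties} provides the relative $\Lambda L$-linear local contractibility required by the lemma, and the tracks of the resulting homotopy stay within $2\Lambda L(\Lambda L + 1)\delta$ of $\im(\beta)$, hence at $d'$-distance at least $R/2 - 2\Lambda L(\Lambda L + 1)\cdot 3R/N$ from $z$, which is positive for $N$ large. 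Thus $\ind(\beta, z) = \ind(\alpha_0, z) \neq 0$.

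The main obstacle is calibrating $N$: it must be large enough (of order $\Lambda^2 L^2$) that the homotopy tracks avoid $z$ and the distance condition (ii) holds, while the length bound $C_0 = 3C_1/N$ depends on $N$ through the doubling estimate on $|S|$. A secondary technical point is confirming that the abstract homeomorphism $U \cong \reals^2$ carries $d'$-balls to open sets to which Proposition \ref{Jordan curve approximation} can be legitimately applied, which is where the bi-Lipschitz equivalence of $d$ and $d'$ on $B_d(z, R_0)$ is used.
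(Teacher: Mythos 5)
Your construction follows the paper's overall strategy (surrounding Jordan curve, discretization at scale comparable to $R$, geodesic polygon, homotopy trick for the index), but it has a genuine gap at the step that is supposed to give conclusion (iii). You assert that by uniform continuity of $\alpha_0$ you can choose cyclically ordered parameters $\theta_1,\hdots,\theta_k$ with $\diam_{d'}(\alpha_0([\theta_j,\theta_{j+1}]))\leq \eta$ \emph{and} $k\leq C_1$, where $C_1$ is the doubling bound on the cardinality of the separated set $S$. That bound on $k$ is unjustified: the Jordan curve produced by Proposition \ref{Jordan curve approximation} is purely topological, with no length or modulus-of-continuity control, so the number of subarcs of diameter $\leq\eta$ it requires is not bounded in terms of the data (a curve whose image is covered by few $\eta$-balls can still oscillate back and forth arbitrarily many times, forcing arbitrarily many subarcs). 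The doubling condition bounds the number of \emph{distinct} $\eta$-separated points, not the number of small-diameter pieces of an arbitrary curve; indeed the paper explicitly notes ``we have no control over the size of $n$'' for exactly this subdivision. Since your polygon has one geodesic side per parameter, your length estimate $\lng_{d'}(\beta)\leq 3k\eta$ gives nothing without a bound on $k$, so (iii) is not established. (Conclusions (i) and (ii) in your argument are fine modulo this, and the calibration of $N$ against $\Lambda L(\Lambda L+1)$ is essentially the paper's choice of $\ep$.)

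The missing idea is how to pass from the uncontrolled subdivision to a polygon with a controlled number of sides. The paper does this in two steps: first, an inductive selection along the curve picks vertices $z_k\in S$ only when the curve leaves the $\ep$-ball of the previous vertex, which couples consecutive vertices at distance comparable to $\ep$; second, and crucially, since the selected vertices may still repeat (the curve may return near a previously used point of $S$), the vertex sequence is decomposed into finitely many cycles in which no vertex repeats, and one observes by additivity of the index that at least one such cycle still has nonzero index with respect to $z$, while properties (i) and (ii) persist for it. Only for that pruned cycle does the cardinality bound on $S$ convert into the bound $m\leq\card S$ on the number of sides, and hence into $\lng_{d'}(\beta)\leq C_0R$. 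Your proof needs this pruning/cycle-decomposition argument (or some substitute for it); as written, the claim $k\leq C_1$ conflates the size of a separated set with the complexity of the curve, and the length bound fails.
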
 

\begin{proof}
Set
$$\ep = \frac{R}{128{\Lambda} L({\Lambda} L+1)}.$$
The number $\ep$ will be roughly the distance between vertices of the polygon to be constructed.  By the definition of the $d'$-metric, the ball $B_{d'}(z, R)$ is connected and has compact closure in $U$, so by Lemma \ref{Jordan curve approximation} there is an embedding $\alpha \colon \sphere^1 \to B_{d'}(z, 2R)$ with $B_{d'}(z, R) \subeq \ins(\im{\alpha}).$ Thus 
\begin{equation}\label{alpha dist}
R \leq \dist_{d'}(z, \im \alpha) \leq 2R \mand \ind(\alpha, z) \neq 0.
\end{equation}
This implies in particular, that $\im{\alpha} \subeq B_0$. 
Let $S$ be a maximal $\ep$-separated set in $\im\alpha$, with respect to the $d'$ metric. By Lemma \ref{relative properties}, $B_0$ is relatively doubling of dimension $Q$ with constant $DL^Q$.  Since $S \subeq \im\alpha \subeq B_{d'}(z, 2R),$ and $2R \leq \diam_{d'}(B_0),$ we see from the definition of $\ep$ that $\card{S}$ is bounded above by a number that depends only on $D, Q, {\Lambda} $ and $L$. 

By Proposition \ref{jordan curve segments}, we may find a finite and cyclically ordered set of points $\{\psi_1, \hdots, \psi_n\} \subeq \sphere^1$ such that for $i=1,\hdots, n$, $\modu n,$
\begin{equation}\label{w choice}
\diam_{d'}(\alpha([\psi_i, \psi_{i+1}])) < \ep.
\end{equation}
Note that we have no control over the size of $n$.  

We inductively define a sequence of indices $\{i_k\} \subeq \{1,\hdots, n\}$ and a sequence of points $\{z_k\} \subeq S$ as follows.  Let $i_i = 1$, and let $z_1 \in S$ be any point such that $d'(z_1, \alpha(\psi_{i_1})) <\ep.$  Such a point exists since $S$ was chosen to be maximal.   Now suppose that $z_k \in S$ and $i_k\in \{1, \hdots, n\}$ have been chosen.  Let $i_{k+1}$ be the smallest index $j$ greater than ${i_k}$ such that $\alpha(\psi_{j}) \notin B_{d'}(z_{k}, \ep).$ If no such index exists, the process stops.   If $i_{k+1}$ may be found, set $z_{k+1}$ to be any point in $S$ such that $d'(z_{k+1}, \alpha(\psi_{i_{k+1}})) < \ep$.  Since $i_k < i_{k+1} \leq n$, this process stops after finitely many iterations.  Let $z_m$ and $i_{m}$ be  the final point and index produced.

\begin{figure}[h]\label{induct}
\begin{center}
\psfrag{apik}{$\alpha(\psi_{i_k})$}
\psfrag{apik1}{$\alpha(\psi_{i_{k+1}})$}
\psfrag{zk}{$z_k$}
\psfrag{e}{$\ep$}
\includegraphics[width=.75\textwidth]{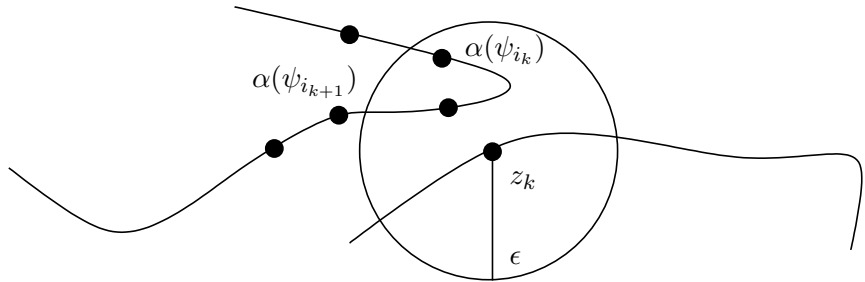}
\caption{Choosing $z_k$}
\end{center}
\end{figure}

The result of this process satisfies 
\begin{itemize}
\item[(a)] $d'(\alpha(\psi_i), z_k) < \ep$ if $1 \leq k < m$ and $ i_{k} \leq i <{i_{k+1}}$,
\item[(b)] $d'(\alpha(\psi_i), z_m) < \ep$ if $i_{m} \leq i \leq n$.  
\end{itemize}
From (a) and \eqref{w choice}, we see that if $1 \leq k < m$,
$$d'(z_k, z_{k+1}) \leq d'(z_k, \alpha(\psi_{i_{k+1}-1})) + d'(\alpha(\psi_{i_{k+1}-1}), \alpha(\psi_{i_{k+1}})) + d'(\alpha(\psi_{i_{k+1}}), z_{k+1}) < 3\ep,$$
and (b) shows that 
$$d'(z_m, z_1) \leq d'(z_m, \alpha(\psi_n)) + d'(\alpha(\psi_n), \alpha(\psi_1)) + d'(\alpha(\psi_1), z_1) < 3\ep.$$
These inequalities imply that 
\begin{equation}\label{geo diam}\lng_{d'}([z_k, z_{k+1}]) =\diam_{d'}([z_k, z_{k+1}]) < 3\ep
\end{equation} for $k=1, \hdots, m$, $\modu m.$  

Furthermore, (a) and (b) together with \eqref{w choice} show that for $k=1, \hdots, m$, $\modu m$,
$$\alpha([\psi_{i_k} ,\psi_{i_{k+1}}]) \subeq B_{d'}(z_k, 2\ep),$$
and so for $k=1, \hdots, m$, $\modu m$, 
\begin{equation}\label{alpha diam}
\diam_{d'}(\alpha([\psi_{i_k}, \psi_{i_{k+1}}])) \leq 4\ep,
\end{equation}

Rename $\psi_{i_k}=\theta_k$ for $k=1, \hdots, m,$ and let $\beta \colon \sphere^1 \to X$ be defined by 
$$ \beta(\theta) = s^{[z_k, z_{k+1}]}_{[\theta_{k}, \theta_{k+1}]}(\theta), \ \theta \in [\theta_{k}, \theta_{k+1}].$$  We wish to show that $\alpha$ and $\beta$ are homotopic away from $z$. To do so, we apply Lemma \ref{homotopy trick} to $\alpha$ and $\beta$, using $\del = 4\ep$.  We now verify the hypotheses of Lemma \ref{homotopy trick}.  Recall that by Lemma \ref{relative properties}, $B_0$ is relatively linearly locally contractible in the $d'$-metric, and by the connectedness of $U$, $R_0/16{\Lambda} L \leq \diam_{d'} B_0.$   If $x \in B_0$ and $0 < r \leq 8\ep({\Lambda} L+ 1)$, the definition of $\ep$ and the assumption that $R \leq R_0$ show that $r \leq \diam_{d'}B_0$, and so the ball $B_{d'}(x, r)$ contracts inside $B_{d'}(x, {\Lambda} Lr).$  The inequalities (a), (b), \eqref{geo diam}, and \eqref{alpha diam} show that the remaining hypotheses of Lemma \ref{homotopy trick} are fulfilled, showing that $\beta$ is homotopic to $\alpha$ inside the $8{\Lambda} L({\Lambda} L +1)\ep$-neighborhood of $\alpha$ (in the $d'$-metric).  Since $8{\Lambda} L({\Lambda} L + 1)\ep= R/16$ and $\dist_{d'}(z, \im\alpha)\geq R$, we see that the tracks of the homotopy do not meet $z$.  Conclusions $(i)$ and $(ii)$ now follow from \eqref{alpha dist}.   

It could be the case that $z_k = z_l$ for $k \neq l$.  As a result, we may not conclude that $m \leq \card{S}$.  However, we may decompose $z_1, \hdots, z_m$ in to a finite collection of cycles where no $z_i$ is repeated.  The map $\beta$ can then be considered as the concatenation of the restrictions to corresponding parameter segments.  Since $\ind(\beta, z) \neq 0$, at least one such restriction must also have non-zero index.  As the resulting loop is a subset of $\im{\beta}$, conclusions $(i)$ and $(ii)$ persist.   Repeating this procedure finitely many times, we may assume without loss of generality that $z_1, \hdots, z_m \subeq S$ are distinct points.  As the cardinality of $S$ depends only on $D, Q, {\Lambda} $, and $c$, condition $(iii)$ now follows from \eqref{geo diam}. Note that by conclusion (ii) and the assumption that $R \leq R_0/48\Lambda L$, the image of $\beta$ is contained in $B_0$. \end{proof}

To complete the proof of Theorem \ref{quasicircle}, we need the following technical fact regarding lower semi-continuity of path integrals.  

\begin{lemma}\label{semi-continuity}  Let $\rho: X \to [0,\infty)$ be a lower semi-continuous function on a metric space $(X,d)$, and suppose that $\{\gamma_n\}_{n \in \nats}$ is a sequence of loops in $X$ of uniformly bounded length.  If $\gamma_n$ converges uniformly to a loop $\gamma$ in $X$, then 
$$\int_{\gamma} \rho \ ds \leq \liminf_{n \to \infty} \int_{\gamma_n} \rho \ ds.$$
\end{lemma}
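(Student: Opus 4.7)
My plan is to reduce the lemma to the case of bounded Lipschitz $\rho$, and then in that case express $\int_\alpha \rho \, ds$ as a supremum of discrete partition sums whose values vary continuously in $\alpha$ under uniform convergence.

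For the reduction, I would write $\rho = \sup_k \rho_k$ as a pointwise monotone increasing supremum of bounded Lipschitz functions, using the standard regularization $\rho_k(x) = \min\{k,\, \inf_{y \in X}[\rho(y) + k\, d(x,y)]\}$; lower semi-continuity of $\rho$ gives $\rho_k(x) \uparrow \rho(x)$ at every $x \in X$. The lower semi-continuity of length under uniform convergence (a standard fact, obtained by taking a supremum over partitions of the elementary identity $\sum_i d(\gamma(\theta_i), \gamma(\theta_{i+1})) = \lim_n \sum_i d(\gamma_n(\theta_i), \gamma_n(\theta_{i+1}))$) combined with the uniform length bound on $\{\gamma_n\}$ ensures that $\gamma$ is rectifiable. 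The monotone convergence theorem applied to the arc-length parameterization of $\gamma$ then yields $\int_\gamma \rho \, ds = \sup_k \int_\gamma \rho_k \, ds$. Since $\rho_k \leq \rho$, we have $\int_{\gamma_n} \rho_k \, ds \leq \int_{\gamma_n} \rho \, ds$, so it suffices to prove $\int_\gamma \rho_k \, ds \leq \liminf_n \int_{\gamma_n} \rho_k \, ds$ for each $k$ and then take the supremum in $k$.

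For bounded Lipschitz $\rho$, I would use the identity
\[
\int_\alpha \rho \, ds \;=\; \sup_P\, \sum_i \Bigl(\inf_{[\theta_i,\theta_{i+1}]} \rho \circ \alpha \Bigr)\, d(\alpha(\theta_i), \alpha(\theta_{i+1})),
\]
where $P = \{\theta_1, \ldots, \theta_N\} \subseteq \sphere^1$ ranges over cyclically ordered partitions. The inequality $\geq$ is immediate from $d(\alpha(\theta_i), \alpha(\theta_{i+1})) \leq \lng(\alpha|_{[\theta_i,\theta_{i+1}]})$ together with $\rho \geq 0$; the reverse follows from a Riemann-sum approximation in the arc-length parameter, combined with the defining identity $\sup_P \sum_i d(\alpha(\theta_i), \alpha(\theta_{i+1})) = \lng(\alpha)$ along any refining sequence of partitions, with the defect between chord length and arc length absorbed using $\|\rho\|_\infty < \infty$. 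Once this formula is available, the conclusion is essentially automatic: for each fixed partition $P$, uniform convergence $\gamma_n \to \gamma$ gives $d(\gamma_n(\theta_i), \gamma_n(\theta_{i+1})) \to d(\gamma(\theta_i), \gamma(\theta_{i+1}))$, and uniform continuity of $\rho$ gives $\rho \circ \gamma_n \to \rho \circ \gamma$ uniformly on $\sphere^1$, hence $\inf_{[\theta_i,\theta_{i+1}]} \rho \circ \gamma_n \to \inf_{[\theta_i,\theta_{i+1}]} \rho \circ \gamma$. The partition sum for $\gamma_n$ therefore converges to the partition sum for $\gamma$ and is dominated by $\int_{\gamma_n} \rho \, ds$; taking $\liminf_n$ and then $\sup_P$ completes the proof.

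The main technical point is verifying the supremum-over-partitions identity in an abstract metric setting where no differentiable structure is available; this is nevertheless a routine Riemann integration exercise, and the boundedness of $\rho$ guaranteed by the reduction step is exactly what makes the discrepancy between arc length and chord length harmless in the limit.
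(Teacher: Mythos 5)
Your argument is correct, but it follows a genuinely different route from the one the paper takes. The paper simply defers to the last three paragraphs of the proof of Proposition 2.17 in Heinonen--Koskela: there one works with the arc-length (constant-speed) parameterizations of the $\gamma_n$, which are uniformly Lipschitz, passes to the limit parameterization, uses the pointwise lower semicontinuity of $\rho$ together with Fatou's lemma, and compares the integral over $\gamma$ with the integral over the limit parameterization via the quoted fact that a $1$-Lipschitz function $s\colon I \to \mathbb{R}$ is differentiable a.e.\ with $s' \leq 1$ a.e. You instead regularize $\rho$ by truncated inf-convolutions $\rho_k(x) = \min\{k, \inf_y[\rho(y) + k\,d(x,y)]\}$, reduce to bounded Lipschitz integrands via monotone convergence along the arc-length parameterization of $\gamma$, and then express $\int_\alpha \rho_k\, ds$ as a supremum of partition sums $\sum_i \bigl(\inf_{[\theta_i,\theta_{i+1}]} \rho_k\circ\alpha\bigr)\, d(\alpha(\theta_i),\alpha(\theta_{i+1}))$, whose terms pass to the limit under uniform convergence because $\rho_k$ is uniformly continuous; the inequality $\geq$ for each fixed partition (valid since $\rho_k \geq 0$ and chords are shorter than subarcs) then closes the argument. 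This trades the measure-theoretic ingredients (a.e.\ differentiability of Lipschitz functions, Fatou) for the Moreau--Yosida approximation, the monotone convergence theorem, and the partition-sum identity, making the proof essentially self-contained; the small price is the verification of that identity, where, as you note, one needs $\gamma$ rectifiable (supplied by lower semicontinuity of length and the uniform length bound), $\rho_k$ bounded to absorb the defect between chord sums and length, and partitions chosen with mesh tending to zero (refinement alone is not quite enough, though the standard choice of partition points equally spaced in arc length does exactly what you describe). The paper's cited argument has the advantage of applying directly to lower semicontinuous $\rho$ without any regularization; yours has the advantage of avoiding differentiation theory entirely and quantifying the approximation at the level of discrete sums.
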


\begin{proof}  The proof of this fact is given in the last three paragraphs of the proof of \cite[Prop. 2.17]{Acta}.  The key fact is that a $1$-Lipschitz function $s: I \to \reals$, where $I$ is any interval, is differentiable almost everywhere and satisfies $s'(t) \leq 1$ almost everywhere.
\end{proof}

\begin{proof}[Proof of Theorem \ref{quasicircle}]  
Let $C_0$ be the constant provided by Lemma \ref{first polygon}, and set
$$C_1 = 320C_0({\Lambda} L+2).$$
Fix $0 < R \leq R_0/C_1$.  

Define the continuous function $\rho \colon B_0\bslash \{z\} \to [0, \infty)$ by 
$$\rho(x)=\left(\frac{R}{\dist_{d'}(z, x)}\right)^2 + 1,$$
and for any rectifiable loop $\gamma \colon \sphere^1 \to B_0$, define the functional 
$$\sigma(\gamma) = \int_{\gamma}\rho \ ds.$$
The function $\sigma$ balances the length of a loop against its distance to $z$. 

If $\beta$ is the loop given by Lemma \ref{first polygon}, we have 
$$0< \sigma(\beta) \leq 5\lng_{d'}(\beta) \leq 5C_0 R.$$
For $n\in \nats$, we may find rectifiable loops $\gamma_n \colon \sphere^1 \to B_0\bslash \{z\}$, such that $\ind(\gamma_n, z) \neq 0$ and 
$$ \lim_{n \to \infty} \sigma(\gamma_n) = \inf \sigma(\gamma),$$
where the infimum is taken over all rectifiable loops $\gamma$ in $B_0\bslash \{z\}$ with $\ind(\gamma, z) \neq 0$.  Without loss of generality, we may assume that $\sigma(\gamma_n) < 2\sigma(\beta) \leq 10C_0R,$ for all $n \in \nats$.  

Fix $n \in \nats$. Our first task is to show that the loop $\gamma_n$ lies in a controlled annulus around $z$.  Define 
$$d_n = \min\{\rho \circ \gamma_n(\theta) : \theta \in \sphere^1\} \mand D_n = \max\{\rho\circ \gamma_n(\theta) : \theta \in \sphere^1\}, $$
and set $l_n = \lng_{d'}(\gamma_n).$  
We have that 
\begin{equation}\label{gamma_n  length} l_n \leq \sigma(\gamma_n) <10C_0R.\end{equation}

Suppose that $d_n \geq 2{\Lambda} Ll_n.$  If $x \in \im(\gamma_n) \subeq B_0$, we have that 
$$\im(\gamma_n) \subeq B(x, 2l_n).$$
Recall that $B_0$ is relatively ${\Lambda} L$-linearly locally contractible by Lemma \ref{relative properties}.  From  \eqref{gamma_n length} and the definition of $C_1$, we see that $2l_n \leq \diam_{d'}B_0.$  Thus $\gamma_n$ is homotopic to a point inside of $B(x, 2{\Lambda} Ll_n) \subeq B_0 \subeq U$.  However, $d(x, z) \geq d_n \geq 2\Lambda Ll_n$, and so we have a contradiction with the assumption that $\ind(\gamma_n, z) \neq 0$.  Thus 
\begin{equation}\label{d_n upper re length} d_n < 2{\Lambda} Ll_n. \end{equation}
The estimates \eqref{gamma_n length} and \eqref{d_n upper re length} now imply that
\begin{equation}\label{D_n bound} D_n \leq d_n + l_n \leq  10C_0(2{\Lambda} L+1)R.\end{equation}

We now derive a lower bound for $d_n$.  We do so in two cases; first assume that $D_n \leq 4d_n$.  Using \eqref{d_n upper re length}, we have 
$$10C_0 R \geq \sigma(\gamma_n) \geq \int_{\gamma_n} \left(\frac{R}{4d_n}\right)^2 \ ds \geq \left(\frac{R}{4d_n}\right)^2 l_n \geq \frac{R^2}{32{\Lambda} L d_n} .$$
Thus 
\begin{equation}\label{d_n lower} d_n \geq \frac{R}{320C_0{\Lambda} L}.\end{equation}
Now assume that $D_n > 4d_n.$  The triangle inequality shows  
$$10C_0R \geq \sigma(\gamma_n) \geq \int_{d_n}^{D_n} \left(\frac{R}{t}\right)^2\ dt = R^2\left(\frac{1}{d_n} - \frac{1}{D_n}\right) \geq \frac{3R^2}{4d_n},$$
which yields $d_n \geq 3R/40C_0$.  In either case \eqref{d_n lower} holds.  

The compactness of $\sphere^1$ and the length bound \eqref{gamma_n length} imply that the family $\{\gamma_n\}_{n \in \nats}$ is equicontinuous.  By the Arzela-Ascoli Theorem, after passing to a subsequence, the loops $\gamma_n$ converge uniformly to a loop $\gamma_0 \colon \sphere^1 \to B_0$ such that the following hold:
\begin{equation}\label{gamma_0 length} \lng(\gamma_0) \leq 10C_0R, \end{equation}
\begin{equation}\label{d_0 lower}d_0 :=  \min\{d_{\gamma_0}(\theta) : \theta \in \sphere^1\} \geq \frac{R}{320C_0{\Lambda} L},\end{equation}
\begin{equation}\label{D_0 bound} D_0:= \max\{d_{\gamma_0}(\theta) : \theta \in \sphere^1\} \leq 10C_0(2{\Lambda} L+1)R. \end{equation}
 By Lemma \ref{semi-continuity}, $\gamma_0$ minimizes the functional $\sigma$ over all rectifiable loops in $B_0$ with non-zero index with respect to $z$.  From \eqref{d_n lower} and \eqref{d_0 lower}, we see that for sufficiently large $n \in \nats$, the loop $\gamma_n$ is homotopic to $\gamma_0$ without hitting $z$.  Thus $\ind(\gamma_0, z) \neq 0$. Furthermore, the estimate \eqref{d_0 lower} shows that we may apply Lemma \ref{dist diam} to $\gamma_0$ with $a=(320C_0{\Lambda} L)\inv$, concluding that 
 $$\diam_{d'}(\im(\gamma_0)) \geq \frac{R}{320C_0({\Lambda} L)^2}.$$
 From these facts and the comparability of the metrics $d$ and $d'$ given in \eqref{path metric ineq}, we conclude that there is a constant $C_2$, depending only on ${\Lambda} , D, Q$ and $L$, such that $\gamma_0$ satisfies the conditions in \eqref{gamma inclusions}.

It remains to show that $\gamma_0$ is a chord-arc circle with an appropriate constant. Since $\gamma_0 \subeq B_0$, the comparability of $d'$ and $d$ stated in \eqref{path metric ineq} shows that it suffices to check the chord-arc condition in the $d'$ metric.  Let $\phi, \psi \in \sphere^1$, and let $J_1$ and $J_2$ denote the subarcs of $\sphere^1$ whose union is $\sphere^1$ and whose intersection is $\{\phi, \psi\}$.  We will show that 
\begin{equation}\label{not chord-arc}\min\{\lng_{d'}(\gamma_0|_{J_1}),\lng_{d'}(\gamma_0|_{J_2})\}  \leq  (6400C_0^2{\Lambda} L({\Lambda} L+1))^2d'(\gamma_0(\phi), \gamma_0(\psi)).\end{equation}
We first assume that 
\begin{equation}\label{small scale} d'(\gamma_0(\phi), \gamma_0(\psi)) \leq  \frac{R}{640C_0{\Lambda} L}. \end{equation}
 By \eqref{D_0 bound} and the definition of $C_1$, the geodesic segment $[\gamma_0(\phi), \gamma_0(\psi)]$ is contained in $B_0$.  By \eqref{d_0 lower}, it does not meet $z$.  By the additivity of index under concatenation, we may assume without loss of generality that the loop 
$$\til{\gamma}_0(\theta) = \begin{cases}
					\gamma_0(\theta) & \theta \in J_1\\
					s_{J_2}^{[\gamma_0(\phi), \gamma_0(\psi)]}(\theta) & \theta \in J_2 \\
					\end{cases}$$
satisfies $\ind(\til{\gamma}_0, z) \neq 0.$ 
 
\begin{figure}[h]\label{pick}
\begin{center}
\psfrag{gJ1}{$\gamma_0(J_1)$}
\psfrag{gJ2}{$\gamma_0(J_2)$}
\psfrag{z}{$z$}
\psfrag{s}{$s_{J_2}^{[\gamma_0(\phi), \gamma_0(\psi)]}(J_2)$}
\includegraphics[width=.75 \textwidth]{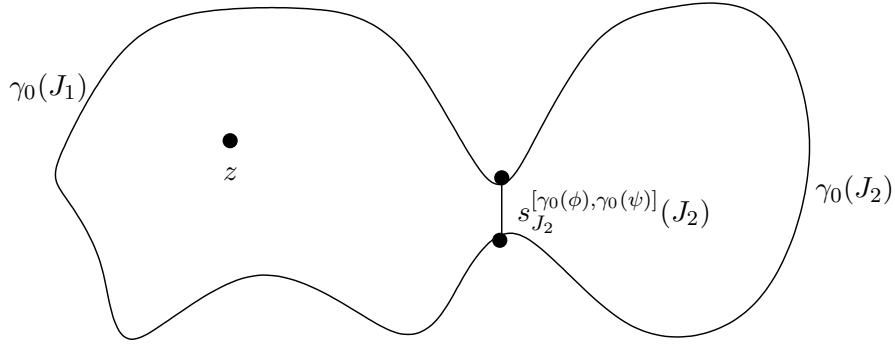}
\caption{A shortcut}
\end{center}
\end{figure}

 Note that for $\theta \in J_2$, \eqref{d_0 lower} and \eqref{small scale} imply   
$$d_{\til{\gamma}_0}(\theta) \geq \frac{R}{640C_0{\Lambda} L}.$$
Using this, \eqref{D_0 bound}, and the fact that $\gamma_0$ and $\til{\gamma}_0$ agree on $J_1$, we see that 
\begin{align*} \sigma(\gamma_0) - \sigma(\til{\gamma}_0)  \geq 
			&  \left(\frac{1}{(10C_0(2{\Lambda} L+1)^2} + 1\right)\lng_{d'}(\gamma_0|_{J_2}) -  \\ &\left((640C_0{\Lambda} L)^2 +1\right) \lng_{d'}(\til{\gamma}_0|_{J_2}). 
						\end{align*}
Since 
$$\lng(\gamma_0|_{J_2}) \geq \lng(\til{\gamma}_0|_{J_2}) = d'(\gamma_0(\phi), \gamma_0(\psi)),$$ the preceding esitmate implies that 
$$ \sigma(\gamma_0) - \sigma(\til{\gamma}_0)  \geq 
			 \left(\frac{1}{10C_0(2{\Lambda} L+1)}\right)^2\lng_{d'}(\gamma_0|_{J_2}) -  \left(640C_0{\Lambda} L\right)^2 \lng_{d'}(\til{\gamma}_0|_{J_2}). $$
						
If \eqref{not chord-arc} does not hold, this implies that $\sigma(\til{\gamma}_0) < \sigma(\gamma_0)$, a contradiction. 

We now assume
$$ d'(\gamma_0(\phi), \gamma_0(\psi)) \geq  \frac{R}{640C_0{\Lambda} L}. $$
By \eqref{gamma_0 length}, we have 
$$\min\{\lng(\gamma_0|_{J_1}),\lng(\gamma_0|_{J_2})\} \leq \lng(\gamma_0) \leq  (6400C_0^2{\Lambda} L )d'(\gamma_0(\phi), \gamma_0(\psi)).$$
This verifies \eqref{not chord-arc}, showing that $\gamma_0$ is a chord-arc loop with appropriate constant, and completes the proof.
\end{proof}

\subsection{Porosity of quasicircles}
\indent 

We now show, in particular, that a quasicircle which is the metric boundary of a metric disk is porous in the completed space.  For quasicircles in the plane, this result is well known. 
Porous sets are small in a quantitative sense.  We will use this concept to get around the fact that subsets of an Ahlfors regular space need not be Ahlfors regular.

\begin{definition}\label{porosity def} A subset $Y$ of a metric space $(X,d)$ is $C$-porous, $C \geq 1$, if for every $y \in Y$ and $0 < r \leq \diam(X)$, there exists a point $x \in X$ such that 
$$B \left(x, \frac{r}{C}\right) \subeq B(y, r) \bslash Y.$$
\end{definition}

\begin{theorem}\label{porosity thm}  Let $\lambda \geq 1$, and suppose that $(X,d)$ is a metric space homeomorphic to the plane such that $\ovl{X}$ is compact and $\lambda$-$LLC$, and $\partial X$ is a $\lambda$-$LLC$ Jordan curve.  Then $\partial X$ is porous in $\ovl{X}$ with constant depending only on $\lambda$.  
\end{theorem}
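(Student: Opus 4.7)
Given $y \in \partial X$ and $0 < r \leq \diam(\ovl{X})$, my plan is to produce a point $x \in \ovl{X} \cap B(y, r)$ with $d(x, \partial X) \geq r/C$, where $C$ depends only on $\lambda$. The approach mirrors the classical chord-midpoint proof of porosity for planar quasicircles, with the $LLC_2$ condition on $\ovl{X}$ serving as the metric substitute for a Euclidean chord across the Jordan domain.

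I would walk along $\partial X$ in both directions from $y$ and let $u, v \in \partial X$ be the first points at distance $r/4$ from $y$, so that the arc $J_y \subseteq \partial X$ from $u$ to $v$ through $y$ lies in $\ovl{B}(y, r/4)$. Applying $LLC_2$ of $\partial X$ to $u, v$ at center $y$ and radius $r/4$, and noting that the only continuum in $\partial X \setminus B(y, r/(4\lambda))$ joining $u, v$ is the complementary arc $J'$, I conclude $J' \subseteq \ovl{X} \setminus B(y, r/(4\lambda))$. Then $LLC_2$ of $\ovl{X}$ applied to $u, v$ produces a continuum $K \subseteq \ovl{X} \setminus B(y, r/(4\lambda))$ containing both points. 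Using Remark \ref{arcs} I extract an arc $\alpha \subseteq K$, and a small rerouting near any interior intersection with $\partial X$ (using local contractibility of $\ovl{X}$) ensures $\alpha \cap \partial X = \{u, v\}$. Since $X$ is homeomorphic to $\reals^2$ and $\partial X$ is a Jordan curve, $\ovl{X}$ is a topological closed disk, so the Jordan curve $\alpha \cup J_y$ bounds a Jordan subdomain $\Omega$ with $\partial \Omega \cap \partial X = J_y$ and $J' \cap \Omega = \emptyset$. The case $r \sim \diam(\ovl{X})$ is handled separately by applying $LLC_2$ of $\ovl{X}$ to a pair of diametric points on $\partial X$ to produce a single fixed interior point at controlled distance from $\partial X$.

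The heart of the proof is showing that $\Omega$ contains a ball of radius $\gtrsim r$. I would argue by contradiction: if $\alpha \subseteq \nbhd_{\epsilon}(\partial X)$ for some $\epsilon \ll r$, then a fine $\epsilon$-chain along $\alpha$ from $u$ to $v$, with each chain point projected to a nearest boundary point and consecutive projections joined by short $\partial X$-subarcs via the three-point condition of Remark \ref{3 point}, yields a path in $\partial X$ from $u$ to $v$ lying in $\nbhd_{C_0 \epsilon}(\alpha)$; after arc extraction this path traces either $J_y$ or $J'$. The first case places $y$ within $O(\epsilon)$ of $\alpha$, contradicting $\alpha \cap B(y, r/(4\lambda)) = \emptyset$ once $\epsilon$ is chosen small enough. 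The second case contradicts the Jordan separation in $\ovl{X}$: since $\alpha \subseteq \partial \Omega$ and $J' \subseteq \ovl{X} \setminus \Omega$, having $J'$ be $\epsilon$-close to $\alpha$ near an interior midpoint would require a short path from $J'$ to the midpoint to cross $\partial \Omega \setminus \alpha = J_y$, yet $J_y$ meets $J'$ only at $u, v$, which are far from the midpoint of $\alpha$.

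I expect this last step to be the principal obstacle. $LLC_2$ alone only forces $\alpha$ to avoid a single small ball around $y$, whereas porosity requires a uniform lower bound on $d(\cdot, \partial X)$ somewhere along $\alpha$. Closing this gap requires the joint use of both $LLC$ conditions together with the disk topology of $\ovl{X}$ to set up the projection dichotomy; making the ``short path forced to cross $J_y$'' argument fully quantitative at the midpoint scale is the delicate point of the entire construction.
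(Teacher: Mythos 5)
The step you yourself flag as the principal obstacle is indeed where the argument fails, and it fails because the contradiction you are aiming for does not exist: nothing in the hypotheses prevents the $LLC_2$-continuum from lying in an arbitrarily thin neighborhood of $\partial X$. In fact the far arc $J'$ is itself an admissible continuum for the $LLC_2$ condition at $(y,r/4)$ (it contains $u,v$ and avoids $B(y,r/(4\lambda))$), so after your small rerouting off the boundary the arc $\alpha$ can satisfy $\alpha\subseteq\nbhd_{\ep}(\partial X)$ for every $\ep>0$; think of the closed round disk with $y$ on the circle and $\alpha$ running the long way around just inside it. Your refutation of the second horn of the dichotomy (the projected boundary path traces $J'$) is exactly where this surfaces: a point of $J'$ lying $\ep$-close to an interior point $m$ of $\alpha$ forces no path to cross $\partial\Omega\setminus\alpha=J_y$, because a short connector from $J'$ to $m$ approaches $\Omega$ from outside and may meet $\partial\Omega$ for the first time at $m\in\alpha$ itself; Jordan separation gives no lower bound on $\dist(J',\alpha)$. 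There is a second gap of comparable weight: $LLC_2$ pushes $\alpha$ away from $y$ but gives no upper bound on its excursion, so even if you found $x\in\alpha$ with $\dist(x,\partial X)\geq r/C$, that $x$ need not lie in $B(y,r)$, which Definition \ref{porosity def} requires. (The rerouting of $\alpha$ off $\partial X$ via ``local contractibility of $\ovl{X}$'' is also unjustified, since $\ovl{X}$ is only assumed $LLC$, but that is minor.)

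For comparison, the paper's proof supplies precisely the two missing ingredients. First, the connector is trapped in an annulus: choosing $w\in\partial X$ with $d(y,w)\geq 2\lambda r$ (possible when $r<\diam(\partial X)/4\lambda$), it applies Janiszewski's theorem in the closed disk (Theorem \ref{disk Jan theorem}) to the disjoint continua $A$, the component of $\ovl{B}(y,r/8\lambda^2)$ containing $y$, and $B$, the component of $\ovl{X}\setminus B(y,r/2\lambda)$ containing $w$; since neither separates $u$ from $v$, there is a continuum $\alpha$ joining them inside $B(y,r/2)\setminus B(y,r/8\lambda^3)$, and this confinement is what kills the ``hug $J'$'' configuration, since $J'$ contains the far point $w$ and so leaves the annulus. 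Second, the quantitative distance to $\partial X$ comes from the three-point condition applied to the splitting of $\partial X$ by the two far-apart points $y$ and $w$ (not by $u$ and $v$): with $I(u)\ni u$ and $I(v)\ni v$ the resulting arcs, connectedness of $\alpha$ yields $x\in\alpha$ with $\dist(x,I(u))=r/32\lambda^4$, and if $I(v)$ came within $r/32\lambda^4$ of $x$ the three-point condition would force $y$ or $w$ into $B(x,r/8\lambda^3)$, which is impossible because $\alpha$ avoids $B(y,r/8\lambda^3)$ and stays in $B(y,r/2)$ while $d(y,w)\geq 2\lambda r$; hence $B(x,r/32\lambda^4)\subseteq B(y,r)\setminus\partial X$. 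Some device that simultaneously confines $\alpha$ to an annulus and splits the boundary at $y$ and an ``antipodal'' point seems unavoidable; splitting at $u,v$, which are close to $y$, cannot detect the required separation.
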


We will need a version of Janiszewski's separation theorem.  We show how the variant can be derived from the original, a proof of which may be found in \cite[V.9]{Newman}. 
A subset $A$ of a topological space $X$ is said to separate points $u, v \in X$ if $u$ and $v$ are in different components of $X\bslash A$.  
  
\begin{theorem}[Janiszewski's Theorem]\label{Jan theorem II} Let $A$ and $B$ be disjoint closed subsets of $\reals^2$.  If $u,v \in \reals^2$ are such that neither $A$ nor $B$ separates $u$ from $v$, then $A\cup B$ does not separate $u$ from $v$.  
\end{theorem}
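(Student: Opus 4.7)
The plan is to reduce the variant to the classical form of Janiszewski's theorem as stated in \cite[V.9]{Newman}, namely: if $A_1, A_2 \subeq \sphere^2$ are closed sets whose intersection $A_1 \cap A_2$ is connected, and if neither $A_1$ nor $A_2$ separates points $u, v \in \sphere^2$, then $A_1 \cup A_2$ does not separate $u$ from $v$ either. The device that converts the ``disjoint'' hypothesis of our variant into the ``connected intersection'' hypothesis of the classical version is the one-point compactification.

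Realize $\reals^2$ as $\sphere^2 \bslash \{\infty\}$, and set $A' := A \cup \{\infty\}$ and $B' := B \cup \{\infty\}$. I first check that $A'$ and $B'$ are closed in $\sphere^2$: the complement $\sphere^2 \bslash A' = \reals^2 \bslash A$ is open in $\reals^2$ since $A$ is closed in $\reals^2$, and any subset of $\reals^2$ that is open in $\reals^2$ is also open in $\sphere^2$ (since $\reals^2 \subeq \sphere^2$ is an open subspace), so $A'$ is closed in $\sphere^2$; the argument for $B'$ is identical. Moreover $A' \cap B' = \{\infty\}$ is a single point, hence connected, so the hypotheses of the classical theorem are satisfied for $A'$ and $B'$.

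The key observation is that for any subset $C \subeq \reals^2$ that is closed in $\reals^2$, the sets $\reals^2 \bslash C$ and $\sphere^2 \bslash (C \cup \{\infty\})$ coincide as topological spaces, so their connected components coincide as well. Applying this to $C = A$, $C = B$, and $C = A \cup B$, the hypothesis that neither $A$ nor $B$ separates $u$ from $v$ in $\reals^2$ translates exactly to: neither $A'$ nor $B'$ separates $u$ from $v$ in $\sphere^2$. The classical theorem then yields that $A' \cup B' = (A \cup B) \cup \{\infty\}$ does not separate $u$ from $v$ in $\sphere^2$, and translating back via the same observation gives that $A \cup B$ does not separate $u$ from $v$ in $\reals^2$. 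There is no genuine obstacle here; the entire proof is a bookkeeping exercise about the compactification, engineered so that the empty intersection $A \cap B = \emptyset$ in $\reals^2$ is replaced by the connected singleton $\{\infty\}$ in $\sphere^2$, at which point the classical Janiszewski theorem applies directly.
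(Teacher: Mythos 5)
Your argument is correct. Note, however, that the paper does not actually prove Theorem \ref{Jan theorem II}: it treats this planar statement as the classical result, referring to \cite[V.9]{Newman} for its proof, and reserves its own argument for the disk variant, Theorem \ref{disk Jan theorem}. What you have written is the standard bridge between the two classical formulations. Starting from the sphere form (closed sets with connected intersection), you pass to the one-point compactification, check that $A\cup\{\infty\}$ and $B\cup\{\infty\}$ are closed in $\sphere^2$ because their complements $\reals^2\bslash A$ and $\reals^2\bslash B$ are open in the open subspace $\reals^2\subeq\sphere^2$, note that their intersection is the connected singleton $\{\infty\}$ (this is exactly where the disjointness of $A$ and $B$ is used), and observe that for each of $A$, $B$, and $A\cup B$ the complement in $\reals^2$ coincides, as a topological space, with the complement in $\sphere^2$ of the corresponding enlarged set, so all separation statements transfer verbatim; each of these steps is sound, and the hypothesis $u,v\in\reals^2$ guarantees the points lie off the enlarged sets. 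The only caveat is bibliographic: you attribute the sphere-with-connected-intersection form to \cite[V.9]{Newman}, whereas the paper's citation suggests the statement found there may already be (or directly contain) the planar disjoint version, in which case your reduction is logically superfluous relative to that source; still, it is a valid, self-contained derivation from the undisputedly classical sphere form, and it has the virtue of making clear that no boundedness assumption on $A$ or $B$ is needed.
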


\begin{theorem}[Janiszewski's Theorem in $\ovl{\mathbb{D}}^2$]\label{disk Jan theorem}  Let $A, B \subeq \ovl{\mathbb{D}}^2$ be disjoint continua. If $u, v \in \ovl{\mathbb{D}}^2$ are such that neither $A$ nor $B$ separates $u$ from $v$, then $A \cup B$ does not separate $u$ from $v$.  
\end{theorem}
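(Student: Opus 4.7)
The plan is to deduce this disk version of Janiszewski's theorem from the planar version (Theorem \ref{Jan theorem II}) by a reflection-doubling construction. I would realize $\ovl{\mathbb{D}}^2$ as the closed upper hemisphere $H_+$ of $\sphere^2$, with boundary equator $E$, and let $\sigma\colon \sphere^2 \to \sphere^2$ denote the reflection through $E$, so that $\sigma$ fixes $E$ pointwise and swaps $H_+$ with the lower hemisphere $H_-$.  For any closed set $C \subeq H_+$, the key object will be its double $C^* := C \cup \sigma(C) \subeq \sphere^2$.  As a preliminary, Janiszewski's theorem on $\sphere^2$ itself is immediate from Theorem \ref{Jan theorem II}: given the hypotheses on $\sphere^2$, choose a point $q \in \sphere^2 \bslash (A' \cup B' \cup \{u',v'\})$ by compactness and apply the planar version in $\sphere^2 \bslash \{q\} \cong \reals^2$.

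The technical core will be a \emph{reflection lemma}: for any closed $C \subeq H_+$ and any $u, v \in H_+$, the set $C$ separates $u$ from $v$ in $H_+$ if and only if $C^*$ separates $u$ from $v$ in $\sphere^2$.  One direction is immediate, since every path in $H_+ \bslash C$ is a path in $\sphere^2 \bslash C^*$.  For the other, given a path $\gamma\colon [0,1] \to \sphere^2 \bslash C^*$ from $u$ to $v$, I would fold it into $H_+$ via
$$\tilde\gamma(t) = \begin{cases} \gamma(t), & \gamma(t) \in H_+,\\ \sigma(\gamma(t)), & \gamma(t) \in H_-. \end{cases}$$
This map is continuous because $\sigma$ fixes $E$, lies entirely in $H_+$, and avoids $C$ since $C^*$ is $\sigma$-invariant and disjoint from $\gamma$.

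With the reflection lemma in hand, the main case---both $A$ and $B$ meeting $E$---is a two-step application.  The doubles $A^*$ and $B^*$ are continua, each being the union of two continua sharing a nonempty subset of $E$, and they are disjoint because $A \cap B = \emptyset$ and $\sigma$ fixes $E$ pointwise.  The lemma translates the non-separation hypothesis for $A$ and for $B$ into non-separation of $A^*$ and $B^*$ in $\sphere^2$; Janiszewski on $\sphere^2$ then gives that $(A \cup B)^* = A^* \cup B^*$ does not separate $u$ from $v$ in $\sphere^2$; and one final application of the lemma to the (possibly disconnected) closed set $A \cup B$ delivers the desired conclusion.

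The main obstacle will be the case where $A$ or $B$ is disjoint from $E$, since then the double is disconnected and the doubling argument degenerates.  I would handle this by a preliminary enlargement: if $A \cap E = \emptyset$, attach to $A$ a thin arc $\eta_A \subeq H_+ \bslash B$ from a point of $A$ to a point of $E \bslash B$, and verify that $A \cup \eta_A$ still does not separate $u$ from $v$ in $H_+$, using that $A, B$ are disjoint compacta in the surface $H_+$ so that the arc can be pushed off any fixed path from $u$ to $v$ in $H_+ \bslash A$.  Performing an analogous enlargement for $B$ when necessary, with the two arcs chosen disjoint, reduces to the main case and completes the argument.
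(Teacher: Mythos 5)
Your main-case machinery is sound: the folding retraction $r(x)=x$ on $H_+$, $r(x)=\sigma(x)$ on $H_-$ is continuous by the pasting lemma, it carries any path in $\sphere^2\bslash C^*$ to a path in $H_+\bslash C$ (and conversely a path in $H_+\bslash C$ misses $C^*$ because $\sigma(C)\cap H_+\subeq C$), the sets $A^*, B^*$ are indeed disjoint since $\sigma$ fixes $E$ pointwise, and the reduction of the spherical Janiszewski theorem to Theorem \ref{Jan theorem II} by deleting a point $q$ works because a connected open subset of a surface stays connected after removing a point. This is a genuinely different route from the paper, which instead shrinks the two given paths radially by a factor $1-\ep'$ so that the whole configuration sits in the open disk, applies the planar theorem there, and then reattaches $u$ and $v$ by short radial segments chosen to avoid $A\cup B$.

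The gap is in your treatment of the case $A\cap E=\emptyset$ (or $B\cap E=\emptyset$). The arc $\eta_A\subeq H_+\bslash B$ from $A$ to $E\bslash B$ that you want to attach need not exist: take $A$ a small closed disk about the center of $\ovl{\mathbb{D}}^2$, $B$ a concentric circle of radius $1/2$, and $u,v$ two points in the annulus between $B$ and $E$. Then $A,B$ are disjoint continua, neither separates $u$ from $v$, but $B$ separates $A$ from $E$, so every arc from $A$ to the boundary meets $B$ and your enlargement is impossible. Moreover, even when such an arc exists, the assertion that it ``can be pushed off any fixed path from $u$ to $v$ in $H_+\bslash A$'' is not a general-position fact (two $1$-dimensional sets in a surface can intersect essentially) and would need an argument that $A\cup\eta_A$ still fails to separate $u$ from $v$. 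Fortunately the whole enlargement is unnecessary within your own framework: Theorem \ref{Jan theorem II} as quoted in the paper applies to arbitrary disjoint \emph{closed} sets, and your reflection lemma never uses connectedness of $C$, so you may apply the doubled argument directly to the (possibly disconnected) compact sets $A^*$ and $B^*$ and skip the degenerate case altogether. With that repair the proof is correct.
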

\begin{proof} Since $A$ does not separate $u$ from $v$, we may find a path $\gamma\colon [0,1] \to \ovl{\mathbb{D}}^2$ such that $\gamma(0)=u$ and $\gamma(1)=v$ and satisfying $A\cap \im{\gamma} = \emptyset$.   Similarly, we have a path $\beta \colon [0,1] \to \ovl{\mathbb{D}}^2$ such that $\beta(0)=u$, $\beta(1)=v$, and $B \cap \im{\beta} = \emptyset.$ Since $A$ and $B$ are compact, we may find $0 < \ep < 1$ such that 
\begin{equation}\label{shrink} \{(1-\del)u : 0 < \del \leq \ep\} \cap (A \cup B) = \emptyset = \{(1-\del)v : 0 < \del \leq \ep\} \cap (A \cup B).\end{equation} 
Furthermore, since $\im{\gamma}$ and $\im{\beta}$ are also compact, we may find $0< \ep' \leq \ep$ so that the paths $\gamma' \colon [0,1] \to \disk$ and $\beta'\colon [0,1] \to \disk$ defined by
$$\gamma'(t) = (1-\ep')\gamma(t) \mand \beta'(t)=(1-\ep')\beta(t)$$
do not intersect $(A \cup B)$.
Let $u' =(1-\ep')u$ and $v'=(1-\ep')v$. Note that $u', v' \in \disk$.  Then $\gamma'$ connects $u'$ to $v'$ without intersecting $A$, and $\beta'$ connects $u'$ to $v'$ outside of $B$.  Since $\disk$ is homeomorphic to $\reals^2$, Theorem \ref{Jan theorem II} provides a continuum $E \subeq \disk$ containing $u'$ and $v'$ such that $E \cap (A \cup B) = \emptyset$.  
Let $\gamma_u \colon [0, \ep'] \to \ovl{\mathbb{D}}^2$ be defined by $\gamma_u(t)= (1-t)u$, and similarly define $\gamma_v$.  Then by \eqref{shrink}, $\im(\gamma_u) \cup E \cup \im(\gamma_v)$ is a continuum connecting $u$ to $v$ which does not intersect $A \cup B$, as desired.
\end{proof}

\begin{proof}[Proof of Theorem \ref{porosity thm}.]
We may assume without loss of generality that the boundary $\partial{X}$ satisfies the so-called three point condition given by \eqref{three point} with constant $\lambda$.  Let $z \in \partial X$, and let $0 \leq r \leq \diam(X)$.  We consider three cases.

\bigskip\noindent\textit{Case 1: $0 \leq r < \diam(\partial X)/4\lambda $.} In this case, we may find a point $w \in \partial X$ such that $d(z, w) \geq 2\lambda r$.  We may also find points $u, v \in \partial{X}$ such that $\{z, u, w, v\}$ is cyclically ordered on $\partial{X}$, $d(z, u)=r/4\lambda=d(z,v)$, and if $J(z)$ is the component of $\partial{X}\bslash \{u,v\}$ which contains $z$, then $J(z) \subeq B(z, r/4\lambda).$  

Let $A$ be the component of $\ovl{B}(z, r/8\lambda^2)$ containing $z$, and let $B$ be the component of $\ovl{X}\bslash B(z, r/2\lambda)$ containing $w$.  Since $\ovl{X}$ is $\lambda$-$LLC$, we see that 
\begin{equation}\label{A B containments} B\left(z, \frac{r}{8\lambda^3}\right) \subeq A, \mand \ovl{X} \bslash B(z, r/2) \subeq B.\end{equation}

As components of compact sets, $A$ and $B$ are continua, and are disjoint by definition.  By definition, $\{u\} \cup J(z) \cup \{v\}$ connects $u$ to $v$ inside $\ovl{B}(z, r/4\lambda) \subeq \ovl{X} \bslash B$.  Furthermore, the $LLC_2$ condition shows that $u$ and $v$ may also be connected in $\ovl{X}\bslash B(z, r/4\lambda^2) \subeq \ovl{X} \bslash A.$  Thus by Theorem \ref{disk Jan theorem}, there is a continuum $\alpha \subeq \ovl{X}\bslash (A \cup B)$ which contains both $u$ and $v$.  By \eqref{A B containments}, we have 
\begin{equation}\label{alpha containments}\alpha \subeq \left(\ovl{X}\bslash B\left(z, \frac{r}{8\lambda^3}\right)\right) \cap B(z, r/2).\end{equation}

\begin{figure}[h]\label{porosity}
\begin{center}
\psfrag{r1}{$r/8\lambda^3$}
\psfrag{r2}{$r/8\lambda^2$}
\psfrag{r3}{$r/4\lambda$}
\psfrag{r4}{$r/2\lambda$}
\psfrag{r5}{$r/2$}
\psfrag{z}{$z$}
\psfrag{w}{$w$}
\psfrag{u}{$u$}
\psfrag{v}{$v$}
\psfrag{a}{$\alpha$}
\psfrag{A}{$A$}
\psfrag{B}{$B$}
\psfrag{Iu}{$I_u$}
\psfrag{Iv}{$I_v$}
\includegraphics[width=.75 \textwidth]{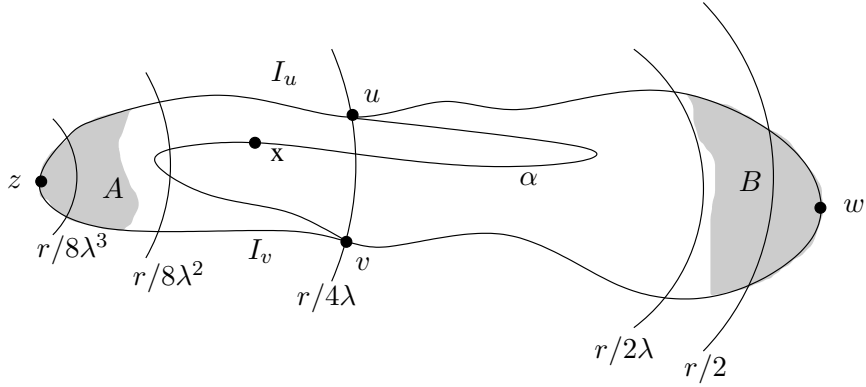}
\caption{After applying Janiszewski's Theorem}
\end{center}
\end{figure}

Let $J(u)$ be the component of $\partial{X} \bslash \{z, w\}$ containing $u$, and set $I(u)= \{z\} \cup J(u) \cup \{w\}.$  Define $I(v)$ similarly. We claim that $\dist(v, I(u)) \geq r/8\lambda^2$.  If this is not the case, the three-point condition implies that either $z$ or $w$ is within distance $r/8\lambda$ of $v$, which is not the case.  Thus by the connectedness of $\alpha$, we may find a point $x \in \alpha$ such that 
$$\dist(x, I(u))=\frac{r}{32\lambda^4}.$$
Suppose there exists a point $y \in I(v)$ such that $d(x,y) < r/32\lambda^4.$  Then $\dist(y, I(u)) \leq r/16\lambda^4$.  Since $\partial{X}$ satisfies the $\lambda$ three-point condition \ref{three point}, this implies that either $z$ or $w$ is contained in $B(y, r/16\lambda^3)$.  However, we have that
$$B\left(y, \frac{r}{16\lambda^3} \right) \subeq B\left(x, \frac{r}{16\lambda^3} + \frac{r}{32\lambda^4}\right) \subeq B\left(x, \frac{r}{8\lambda^3}\right),$$ 
and that $x \in \alpha$.  By \eqref{alpha containments} and the fact that $d(z, w) \geq 2\lambda r$, this is a contradiction.  Thus we see that $\dist(x, I(v)) \geq r/32\lambda^4$.  This along with \eqref{alpha containments} shows that  
$$B\left(x, \frac{r}{32\lambda^4}\right) \subeq B(z, r)\bslash \partial{X}.$$

\bigskip\noindent\textit{Case 2: $8\diam(\partial{X}) \leq r \leq \diam{X}$.}  We may find a point $x \in \ovl{X}$ such that $d(x, z) =r/4.$  Since $z \in \partial{X}$, and $\diam{\partial{X}} \leq r/8$, we see that 
$$\dist(x, \partial{X}) \geq d(x, z) - \diam{\partial{X}} \geq \frac{r}{8}.$$
Thus $B(x, r/8) \subeq B(z,r) \bslash \partial{X}$.  

\bigskip\noindent\textit{Case 3: $\diam(\partial{X})/4\lambda \leq r \leq 8\diam(\partial{X}).$}  We have that 
$$\frac{r}{64\lambda}< \frac{\diam(\partial{X})}{4\lambda},$$
and so by Case 1, there is a point $x \in X$ such that 
$$B\left(x, \frac{r}{2048 \lambda^5} \right) \subeq B\left(z, \frac{r}{64\lambda}\right) \bslash \partial{X} \subeq B(z, r) \bslash \partial{X}.$$

These cases show that $\partial{X}$ is $2048\lambda^5$-porous in $\ovl{X}$.  
\end{proof}

\subsection{The proof of Theorem \ref{existence}}
\indent 

In this subsection, we collect the results proven thus far and complete the proof of Theorem \ref{existence}.   We begin with a theorem that identifies planar sets on a surface, for which we were unable to find a reference.

\begin{proposition}\label{complement prop} Let $\mathcal{S}$ be a surface, and let $U \subeq \mathcal{S}$ be a connected, relatively compact, non-compact, non-empty, open subset such that $\mathcal{S}\bslash U$ is connected and the homomorphism $i_*\colon \pi_{1}(U) \to \pi_1(\mathcal{S})$ induced by the inclusion $i\colon U \to \mathcal{S}$ is trivial. Then $U$ is homeomorphic to the plane. 
\end{proposition}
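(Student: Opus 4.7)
The plan is to show that every simple closed curve in $U$ bounds an embedded disk in $U$, and then to invoke the classification of $2$-manifolds to conclude $U$ is homeomorphic to $\reals^{2}$.

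Given a simple closed curve $\gamma \subeq U$, the hypothesis that $i_{*}$ is trivial implies $\gamma$ is null-homotopic in $\mathcal{S}$, so by the classical disk lemma for surfaces (a null-homotopic embedded simple closed curve on a $2$-manifold bounds an embedded closed disk) there is an embedded closed disk $\ovl{D} \subeq \mathcal{S}$ whose boundary is $\gamma$. Writing $D = \inter(\ovl{D})$, the complement $\mathcal{S}\bslash \gamma$ decomposes as $D \sqcup (\mathcal{S}\bslash \ovl{D})$, and the closed connected set $\mathcal{S}\bslash U$ must lie in exactly one of these two components.

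In the case $\mathcal{S}\bslash U \subeq \mathcal{S}\bslash \ovl{D}$, one has $\ovl{D} \subeq U$ directly, and $\gamma$ bounds a disk in $U$. In the other case $\mathcal{S}\bslash U \subeq D$, I would argue $\mathcal{S}\bslash \ovl{D} \subeq U \subeq \ovl{U}$; combined with compactness of $\ovl{U}$ and $\ovl{D}$, this makes $\mathcal{S} = \ovl{D} \cup (\mathcal{S}\bslash D)$ itself compact. The composition $\pi_{1}(\mathcal{S}\bslash \ovl{D}) \to \pi_{1}(U) \to \pi_{1}(\mathcal{S})$ is trivial by hypothesis, but the inclusion of a closed surface minus an open disk into the surface is $\pi_{1}$-surjective (the complement deformation retracts onto the $1$-skeleton of a cell structure on $\mathcal{S}$), forcing $\pi_{1}(\mathcal{S}) = 0$ and hence $\mathcal{S} = \sphere^{2}$. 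In the sphere, $\gamma$ also bounds the other disk $\mathcal{S}\bslash D$, which lies in $U$ by the case assumption.

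With every simple closed curve in $U$ bounding a disk in $U$, I would then deduce that $U$ is simply connected. The cleanest route runs through homology: $\pi_{1}$ of a non-compact $2$-manifold without boundary is free, and $H_{1}(U;\ints)$ is generated by classes of embedded simple closed curves, each of which vanishes because the curves bound disks; hence $H_{1}(U) = 0$, and a free group with trivial abelianization is itself trivial. A connected, simply connected, non-compact $2$-manifold without boundary is homeomorphic to $\reals^{2}$ by the classification of simply connected surfaces. The main obstacle is the second case of the dichotomy — confirming that $\pi_{1}$-surjectivity of the complement-of-a-disk inclusion, together with the triviality of $i_{*}$, forces $\mathcal{S}$ to be a sphere — and as a secondary matter the careful invocation of the disk lemma (its two-sidedness hypothesis is automatic here, since a one-sided simple closed curve on a surface is never null-homotopic).
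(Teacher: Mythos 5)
Your argument is correct, but it takes a genuinely different route from the paper's, most visibly in how the compact case is handled. The paper splits on compactness of $\mathcal{S}$: when $\mathcal{S}$ is non-compact it extracts a homotopically non-trivial Jordan curve in $U$, bounds it by an embedded disk (the same disk lemma you quote, which the paper proves by a universal-cover/Schoenflies/Brouwer sketch), and gets a contradiction because that disk must contain all of $\mathcal{S}\bslash U$, forcing $\mathcal{S}$ to be compact; when $\mathcal{S}$ is compact it switches to algebra, using the long exact sequence of the pair $(\mathcal{S},U)$, Alexander duality with $\ints_2$ coefficients, and the universal coefficient theorem to kill $H_1(U)$. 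You instead prove the single statement that every simple closed curve $\gamma$ in $U$ bounds a disk in $U$, treating the ``disk on the wrong side'' case head-on: there $\mathcal{S}$ is compact, and $\pi_1$-surjectivity of the disk-complement inclusion together with triviality of $i_*$ makes $\mathcal{S}$ simply connected, hence $\sphere^2$, where Schoenflies hands you the complementary disk, which lies in $U$. This buys a uniform treatment of both cases with no \v{C}ech cohomology or duality, at the price of quoting two standard facts the paper does not use in this form: the Epstein-type disk lemma as a black box, and the fact that $H_1(U;\ints)$ is generated by classes of embedded circles (resolve the double points of a transverse representative); the paper's corresponding reduction works instead at the $\pi_1$ level, decomposing a transversally self-intersecting essential loop into an essential Jordan curve. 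Both proofs finish identically: $\pi_1(U)$ is free since $U$ is an open non-compact surface, trivial abelianization forces triviality, and a simply connected non-compact surface is the plane. Two points in your write-up deserve tightening, though neither is a genuine gap: justify the $\pi_1$-surjectivity by van Kampen (attaching $\ovl{D}$ along $\gamma$ only kills the class of $\gamma$), or by deformation retracting $\mathcal{S}\bslash\{p\}$, $p \in D$, onto $\mathcal{S}\bslash D$, rather than by appealing to a cell structure whose $2$-cell is the given disk; and note that Jordan curves in surfaces are tame, so $\mathcal{S}\bslash D$ is a compact surface with boundary whose interior $\mathcal{S}\bslash\ovl{D}$ includes into it as a homotopy equivalence.
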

 
\begin{proof}  It suffices to show that $U$ is simply connected, as follows.  If $U$ is simply connected, then it is orientable \cite[6.2.10]{Spanier}.  Every connected, orientable surface has a Riemann surface structure \cite[II.1.5E]{Sario} .  Since $U$ is non-compact, and non-empty the Uniformization Theorem implies that $U$ is homeomorphic to the plane. 

We first consider the case that $\mathcal{S}$ is not compact.  Suppose that $U$ is not simply connected.  Any continuous loop $\gamma\colon \sphere^1 \to \mathcal{S}$ is homotopic to a loop with only transversal self-intersections \cite[Ch. 2 Sec. 3]{G+P}.  Thus we may find a loop in $U$ with only finitely many self-intersections which represents a non-trivial homotopy class.  By decomposing this loop, we may find a Jordan curve $J$ in $U$ which represents a non-trivial homotopy class. 

We now claim that there is an embedding $h\colon \disk \to \mathcal{S}$ such that the topological boundary of $h(\disk)$ is $J$. We give only a sketch of the proof of this claim, and refer to \cite{Scott} for the details.  By the Uniformization Theorem, the universal cover $\til{\mathcal{S}}$ of $\mathcal{S}$ is homeomorphic to the plane or the sphere.  Since $J$ is null-homotopic, the pre-image of $J$ under the universal covering map is a collection of disjoint Jordan curves.  By Schoenflies' theorem, each such curve is the boundary of an embedded disk $D$ in $\ovl{\mathcal{S}}$.  The group of deck transformations acts fix point free and moves each pre-image of $J$ off of itself.   Again by Schoenflies' theorem, we see that if $g$ is a deck transformation with $g(D) \cap D \neq \emptyset$, then either $g(D) \subeq D$ or $g\inv (D) \subeq D$.  In either case, the Brouwer fixed-point  theorem yields a contradiction. Thus the covering projection restricted to $D$ is a homeomorphism, proving the claim.  

As $J$ represents a non-trivial homotopy class in $U$, we must have $h(\disk) \cap (\mathcal{S} \bslash U) \neq \emptyset$.  As $J \subeq U$,  the intersection $h(\disk) \cap (\mathcal{S} \bslash U)$ is a relatively open and closed subset of $\mathcal{S} \bslash U$, and hence it is all of $\mathcal{S} \bslash U$.  Since $U$ is relatively compact, this implies that $\mathcal{S}$ is compact, a contradiction. 

We now assume that $\mathcal{S}$ is compact. We will make a homological argument; all homology groups will be singular and have coefficients in $\ints_2$.  The reason for this is that any manifold has a unique orientation over $\ints_2$  \cite[6.2.9]{Spanier}, and we will eventually use a duality theorem that requires an orientation.  
  The long exact sequence for homology of the pair $(\mathcal{S}, U)$ includes 
$$\hdots \to H_2(U) \to H_2(\mathcal{S}) \to H_2(\mathcal{S}, U) \to H_1(U) \to H_1(\mathcal{S}) \to \hdots.$$ 
By a version of Alexander duality \cite[6.2.17 and 6.9.9]{Spanier}, there is a natural isomorphism $$H_2(\mathcal{S}, U) \cong \textit{\v{H}}^{0}(\mathcal{S}\bslash U),$$ where $\textit{\v{H}}^\ast$ denotes C\v{e}ch cohomology with coefficients in $\ints_2$.  Since $\mathcal{S}\bslash U$ is connected, we see that $H_2(\mathcal{S}, U) \cong \ints_2$.  As $U$ is open and non-compact, $H_2(U) = 0$ (see \cite{Scott}). By exactness, 
\begin{equation}\label{second hom map} H_2(\mathcal{S}) \to H_2(\mathcal{S},U)\end{equation}
is injective.  On the other hand, $\mathcal{S}$ is orientable over $\ints_2$ and compact, and so  $H_2(\mathcal{S})\cong \ints_2$.  Thus the homomorphism \eqref{second hom map} is surjective as well. Exactness yields that 
\begin{equation}\label{third hom map} H_2(\mathcal{S}, U) \to H_1(U)\end{equation}
is trivial.

Since $i_*\colon \pi_1(U) \to \pi_1(\mathcal{S})$ is trivial, the homomorphism 
$$ H_1(U) \to H_1(\mathcal{S})$$
is trivial as well.  Exactness implies that \eqref{third hom map} is surjective, and hence that $H_1(U)$ vanishes. Since $U$ is open and non-compact, $\pi_1(U)$ is a free group \cite[I.44]{Sario}.  Thus if $\pi_1(U)$ is non-trivial, then $H_1(U; \ints)$ is a free abelian group of positive rank.  By the Universal Coefficient Theorem for homology \cite[Theorem 15.4(a)]{Bott}, $H_1(U)$ contains a subgroup isomorphic to $H_1(U; \ints) \otimes \ints_2$, which is non-trivial.  This is a contradiction, and so we conclude that $\pi_1(U)$ is trivial, as desired.
\end{proof}

We now show that in a proper, $LLLC$ metric surface, the radius associated with the contractibility condition determines the size of planar sets in the surface. 

\begin{proposition}\label{disk size} Let $(X,d)$ be a connected, proper, and linearly locally contractible metric space homeomorphic to a surface, $K \subeq X$ a compact set, and $R_K$ and $\Lambda_K$ be the radius and constant associated to this set by the linear local contractibility condition.   Then for each $z \in K$ and $R \leq R_K/8\Lambda_K$ such that $B(z, 4\Lambda_K R) \subeq K$, there exists a neighborhood $U$ of $z$ which is homeomorphic to the plane and satisfies 
\begin{equation}\label{disk inclusions} B\left(z, \frac{R}{2\Lambda_K}\right) \subeq U \subeq B(z, R).\end{equation}
\end{proposition}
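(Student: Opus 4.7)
The plan is to invoke Proposition \ref{complement prop} with $\mathcal{S}=X$ and a neighborhood $U$ of $z$ built from the component of $z$ in $B(z,R)$. Set $r_1 := R/(2\Lambda_K)$; since $r_1 \leq R_K$, the $LLLC$ condition contracts $B(z,r_1)$ inside $B(z,\Lambda_K r_1)=B(z,R/2)$, and the individual tracks of this contraction give paths from each point of $B(z,r_1)$ to $z$ lying in $B(z,R/2) \subseteq B(z,R)$. Hence if $\tilde U$ denotes the connected component of $z$ in $B(z,R)$, then $B(z,r_1) \subseteq \tilde U \subseteq B(z,R)$. Because $X$ is locally connected (as a topological $2$-manifold), $\tilde U$ is open; by construction it is nonempty and connected, and since $\tilde U \subseteq B(z,R) \subseteq K$ it is relatively compact. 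Applying $LLLC$ once more, every loop in $\tilde U \subseteq B(z,R)$ contracts inside $B(z,\Lambda_K R) \subseteq X$, so the inclusion induces the trivial map $\pi_1(\tilde U) \to \pi_1(X)$.

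The one hypothesis of Proposition \ref{complement prop} not yet verified is the connectedness of the complement, which can fail for $\tilde U$. To arrange it, I would ``fill in'' the bounded complementary components of $\tilde U$ by setting
\[
U := \tilde U \cup \bigcup\bigl\{ C : C \text{ is a connected component of } X \setminus \tilde U \text{ with } C \subseteq B(z,R)\bigr\},
\]
so that $W := X \setminus U$ is precisely the union of those components of $X \setminus \tilde U$ that meet $X \setminus B(z,R)$. The sandwich $B(z,r_1) \subseteq U \subseteq B(z,R)$ is then immediate. Three topological facts must be checked: (a) $U$ is open (equivalently $W$ is closed); (b) $U$ is connected; and (c) $W$ is connected. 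For (a) and (b), I would exploit the local connectedness of $X$, the compactness of $\overline{\tilde U}$, and the general fact that each component $C$ of $X \setminus \tilde U$ satisfies $\partial C \subseteq \partial \tilde U$; this forces $\tilde U \cup C$ to be connected whenever $C \subseteq B(z,R)$, and lets neighborhoods of points of $W$ avoid the filled-in components.

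For (c) --- the principal difficulty --- I would invoke Proposition \ref{LLLC implication} applied to the set $U_2$ defined as the connected component of $z$ in $B(z,2\Lambda_K R)$. This $U_2$ is open, connected, lies in $K$, and has diameter at most $4\Lambda_K R \leq R_K$, so it satisfies the relative $LLC_2$ condition for some constant depending on $\Lambda_K$. A repetition of the contraction argument from paragraph one shows $\overline{B(z,R)} \subseteq U_2$, so $B(z,R)$ is compactly contained in $U_2$; the $LLC_2$ condition then yields that any two points of $U_2 \setminus \overline{B(z,R)}$ are joined by a continuum inside $U_2 \setminus \overline{B(z,r_1)}$. Since any two ``outer'' components of $X \setminus \tilde U$ each contain points of $U_2 \setminus \overline{B(z,R)}$, such a continuum links them through a common outer annulus, forcing $W$ to be connected. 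With (a)--(c) in hand, Proposition \ref{complement prop} yields the planarity of $U$; the required non-compactness of $U$ follows from $U \subsetneq X$ in the connected Hausdorff space $X$, the degenerate case $X = U$ being excluded by $U \subseteq B(z,R)$ and the fact that a compact surface is not planar.
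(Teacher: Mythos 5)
Your step (c) — connectedness of $W=X\setminus U$ — is where the argument breaks, and it is not a repairable gap but a flaw in the construction itself. The continuum supplied by the relative $LLC_2$ condition for $U_2$ avoids only $\overline{B}(z,r_1)$; it is perfectly free to pass through $\tilde U$, which typically reaches all the way out to the sphere $\{d(\cdot,z)=R\}$. A continuum joining points of two distinct ``outer'' components of $X\setminus \tilde U$ that crosses $\tilde U$ gives no information about whether those components coincide, so it cannot force $W$ to be connected. (The subsidiary claim that every outer component contains a point of $U_2\setminus\overline{B}(z,R)$ is also unjustified: an outer component may only touch the sphere $\{d=R\}$, and its far points need not lie in the component $U_2$ of $z$.) Worse, $W$ can genuinely be disconnected under the hypotheses: take $X$ to be a flat plane with a tall, capped cylindrical ``finger'' whose base circle lies well inside $B(z,R)$ but whose upper part lies at intrinsic distance $>R$ from $z$. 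Such a surface is linearly locally contractible with constants $\Lambda_K, R_K$ compatible with $R\leq R_K/8\Lambda_K$ (one only needs $R_K$ large relative to the finger), yet $\tilde U=B(z,R)$ is connected, and $X\setminus \tilde U$ has two components meeting $X\setminus B(z,R)$: the unbounded outside and the top of the finger. Neither may be filled in (the finger top is not contained in $B(z,R)$), so your $U$ equals $B(z,R)$, which is an open annulus, not a plane. Thus no argument can rescue (c) for your choice of core set $\tilde U$. Step (a) is also glossed over — closedness of $W$ requires a genuine argument (e.g.\ a boundary-bumping plus limit-of-continua argument), since outer components can accumulate — and your final degenerate-case remark invokes ``a compact surface is not planar'' where what is actually available before planarity is proved is contractibility of $X=B(z,R)$ and the fact that no compact surface is contractible.

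The paper's proof sidesteps exactly this difficulty by cutting at the small scale rather than the large one: it defines $W$ to be a component of $X\setminus B(z,R/2\Lambda_K)$, proves via the relative $LLC_2$ condition (Proposition \ref{LLLC implication} applied to the component $V$ of $z$ in $B(z,4\Lambda_K R)$) that there is a \emph{unique} such component meeting $X\setminus B(z,R)$, and sets $U=X\setminus W$. There the $LLC_2$ continuum only needs to avoid the small ball $B(z,R/2\Lambda_K)$, which is precisely the set whose complement's components are being compared — so the argument closes, the complement of $U$ is connected by construction, and the inclusions \eqref{disk inclusions} follow since every point at distance at least $R$ lies in $W$. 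In the finger example this produces the small disk $B(z,R/2\Lambda_K)$ rather than the annulus $B(z,R)$. If you want to salvage your write-up, replace $\tilde U$ by the small ball $B(z,R/2\Lambda_K)$ and prove the uniqueness statement as in the paper; the filling-in of ``holes'' then happens automatically, since every component of $X\setminus B(z,R/2\Lambda_K)$ other than $W$ is contained in $B(z,R)$.
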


\begin{proof} Let $(X,d)$, $K$, $R_K,$ and $\Lambda_K$ be as in the statement, and fix $z \in K$ and $R \leq R_K/8\Lambda_K.$  

If $X \bslash B(z, R)$ is empty, then $X$ is compact because it is proper. Moreover, the definition of $R$ shows that in this case, $X$ is contractible.  This is a contradiction, as there are no compact, contractible surfaces.  Thus we may assume $X \bslash B(z, R) \neq \emptyset$.  

We first claim that there is a unique component of $X \bslash B(z, R/2\Lambda_K)$ that intersects $X\bslash B(z, R).$ By assumption, the ball $B(z, 4\Lambda_K R)$ is contained in $K$ and its diameter is no greater than $R_K$.  Thus by proposition \ref{LLLC implication}, $B(z, R/2\lambda_K)$ satisfies the first relative $LLC$ condition with constant $\Lambda_K$.  Let $V$ be the connected component of $B(z, 4\Lambda_K R)$ containing $z$.  Then $B(z, 2R)$ is compactly contained in $V$.  As $X$ is locally connected, $V$ is open \cite[Theorem 25.3]{Munkres}.  Thus by Proposition \ref{LLLC implication}, $V$ is relatively $2\Lambda_K$-$LLC$.  

As $X$ is connected and locally connected, if $S$ is a subset of $X$ which is not all of $X$, the closure of the components of $S$ must intersect $X \bslash S$.  Thus if there is more than one component of $X \bslash B(z, R/2\Lambda_K)$ that intersects $X \bslash B(z,R)$, then we may find points $x$ and $y$ in distinct components of $X \bslash B(z, R/2\Lambda_K)$ such that $d(x,z)=R=d(y,z).$ However, $B(z, R)$ is compactly contained in $V$, and so the second relative $LLC$ condition implies that $x$ and $y$ may be connected in $V \bslash B(z, R/2\Lambda_K)$.   This is a contradiction. 

Let $W$ be the unique component of $X \bslash B(z, R/2\Lambda_K)$ which intersects $X\bslash B(z,R)$, and set $U = X \bslash W$.  Then $U$ is an open, non-compact subset of $X$ with connected complement.  We in fact have that $W \subeq X\bslash B(z, R)$, and so $U \subeq B(z, R)$.  Since $X$ is linearly locally contractible, $B(z, R)$ contracts inside $B(z, \Lambda_K R)$.  Thus the homomorphism $i_*\colon \pi_1(U) \to \pi_1(X)$ induced by the inclusion $i \colon U \to X$ is trivial. Since $X$ is proper, $U$ is relatively compact in $X$.  Proposition \ref{complement prop} now shows that $U$ is homeomorphic to the plane. The inclusions \eqref{disk inclusions} hold by construction. 
\end{proof} 

\begin{proof}[Proof of Theorem \ref{existence}]  We recall the set up. The space $(X,d)$ is a proper, linearly locally contractible, and Ahlfors $2$-regular metric space. The set $K$ is a compact subset of $X$ such that if $x \in K$, and $0< r \leq R_K$, then the ball $B(x, r)$ contracts inside $B(x, \Lambda_K r)$, and moreover
$$\frac{r^2}{C_K} \leq \Hdim^2(B(x, r)) \leq C_K r^2.$$
We let $z$ be an interior point of $K$, and set
$$R_0 =\min \{ \max\{R \geq 0 : \ovl{B}(z, R) \subeq K\}, R_K\} > 0.$$ 
If $X = B(z, R_0)$, then $X$ is compact and contractible.  As discussed in the proof of Proposition \ref{disk size}, this is a contradiction.  Thus we may assume that $X \bslash B(z, R_0) \neq \emptyset.$  

Consider the ball $B_0 = B(z, R_0/4)$.  The definition of $R_0$ implies that $\diam(B_0) \leq  R_K/2$, and that the $2\diam(B_0)$-neighborhood of $B_0$ is contained in $K$.  This easily implies that $B_0$ is relatively Ahlfors $2$-regular and relatively linearly locally contractible with constants $C_K$ and $\Lambda_K$ respectively. In addition, Proposition \ref{loc regular implies loc doubling} implies that $B_0$ has relative Assouad dimension $2$ with constant $64C_K^2$.  Similarly, Proposition \ref{LLLC implication} shows that $B_0$ satisfies the first relative $LLC$ condition with constant $\Lambda_K$.  

If $x,y \in B(z, R_0/16) \subeq B_0$, then $2d(x,y)\leq \diam(B_0)$.  Since $x,y \in B(x, 2d(x,y))$, the first relative $LLC$ condition shows that there is a continuum connecting $x$ to $y$ of diameter no greater than $4\Lambda_Kd(x,y)$. Thus $B(z, R_0/16)$ is of $4\Lambda_K$-bounded turning in $X$.  Furthermore, as a subset of $B_0$, the ball $B(z, R_0/16)$ has relative Assouad dimension at most $2$ with constant depending only on $C_K$.  By Proposition \ref{cqa existence}, there are constants $M, N \geq 1$ and $0<c\leq1$, depending only on $C_K$ and $\Lambda_K$, such that for each pair of points $x, y \in B(z, cR_0/16)$ and each $0 < \ep < d(x,y)$, there is an $(\ep, M)$-quasiarc connecting $x$ to $y$ inside $B(x, Nd(x,y)).$  


By the definition of $R_0$, we have
$$\frac{cR_0}{16\Lambda_K} \leq \frac{R_K}{8\Lambda_K},$$
and that $B(z, cR_0/4) \subeq K.$  Thus by Proposition \ref{disk size}, there is a neighborhood $U$ of $z$ homeomorphic to $\reals^2$ such that 
$$B\left(z, \frac{cR_0}{32\Lambda_K^2}\right) \subeq U \subeq B\left(z, \frac{cR_0}{16\Lambda_K}\right) \subeq B_0.$$ 
Since $U \subeq B_0$, $U$ is also relatively Ahlfors $2$-regular and relatively linearly locally contractible with constants $C_K$ and $\Lambda_K$ respectively. Furthermore, if $x, y \in U$, and $0< \ep < d(x,y)$, there is an $(\ep, M)$-quasiarc connecting $x$ to $y$ inside $B(x, Nd(x,y)).$  Theorem \ref{quasiconvexity main work} now implies that there is a constant $L \geq 1$, depending only on $C_K$ and $\Lambda_K$, such that each pair of points 
$$x, y \in B\left( z, \frac{cR_0}{128\Lambda_K^2 N} \right)$$
may be connected by a path of length no more than $Ld(x,y)$.  

As a subset of $B_0$, the set $U$ has relative Assouad dimension at most $2$ with constant depending only on $C_K$.  Theorem \ref{quasicircle} shows that there are constants $\lambda, C_1, C_2 \geq 1$ depending only on $C_K$ and $\Lambda_K$ such that if $R \leq  R_0/C_1$, then there is a $\lambda$-chord-arc loop $\gamma$ in $U$ such that $\ind(\gamma, z) \neq 0$ and 
\begin{equation}\label{gamma estimates} \frac{R}{C_2} \leq \dist(z, \im(\gamma)) \leq C_2R, \mand \frac{R}{C_2} \leq \diam(\im{\gamma}) \leq C_2R.\end{equation}

Let $A_1 = C_1$, and fix $R \leq R_0/A_1$.  Let $\gamma$ be as described above, and set $\Omega$ to be the inside of $\im{\gamma}$. Then $\ovl{\Omega} = \Omega \cup \im(\gamma)$.  We first claim that 
\begin{equation}\label{omega diam} \ovl{\Omega} \subeq B(z, C_2(4\Lambda_K+2)R).\end{equation}
The proof is similar to Case 2 of Theorem \ref{porosity thm} and Lemma \ref{dist diam}, but we include it for completeness.  Suppose that $x \in \ovl{\Omega}$ but $d(z,x) \geq C_2(4\Lambda_K+2)R.$  Then by \eqref{gamma estimates}
\begin{equation}\label{omega diam 1} \dist(x,\gamma) \geq d(z, x) - \dist(z, \gamma) - \diam(\gamma)  \geq 4C_2\Lambda_K R. \end{equation}
This shows that $x \in \Omega$, and so $\ind(\gamma, x) \neq 0$.  However, since $\ovl{\Omega} \subeq U$, and $U$ is  relatively $\Lambda_K$-linearly locally contractible, $\gamma$ is homotopic to a point with homotopy tracks inside the $2\Lambda_K\diam(\gamma)$ neighborhood of itself.  By \eqref{gamma estimates} and \eqref{omega diam 1}, these tracks do not hit $x$.  This is a contradiction, proving \eqref{omega diam}.  Note that this implies that 
\begin{equation}\label{omega gamma diam} 2\diam(\ovl{\Omega}) \leq \diam(U).\end{equation}
We now claim that 
\begin{equation}\label{lower omega inclusion} 
B\left(z, \frac{R}{2\Lambda_KC_2}\right) \subeq \Omega.
\end{equation}
Let $C(z)$ be the component of $B(z, R/C_2)$ containing $z$.  Since $z \in \Omega$,  \eqref{gamma estimates} implies that $C(z) \subeq \Omega.$  Since $U$ is relatively $2\Lambda_K$-$LLC$, $B(z, R/2\Lambda_KC_2) \subeq C(z)$, establishing the claim.  From \eqref{lower omega inclusion} and \eqref{omega diam}, we see that $\Omega$ satisfies condition $(i)$ of the theorem, with $A_2 = C_2(4\Lambda_K+2).$  

It remains to show that $\Omega$ is quasisymmetrically equivalent to the disk with controlled distortion function.  To do so, we will show that $\ovl{\Omega}$ is $LLC$ and Ahlfors $2$-regular with constants depending only on $\Lambda_K$ and $C_K$.  The desired result will then follows from Theorem \ref{main}. 

We first address the $LLC$ condition.   We will not use the full strength of the fact that $\gamma$ is a chord-arc loop.  Instead, we will only need the weaker diameter condition given by \eqref{three point}.  Note that as $U$ is an open, connected subset of $B_0$, Proposition \ref{LLLC implication} implies that $U$ is relatively $2\Lambda_K$-$LLC$. 

Let $x \in \ovl{\Omega}$ and $r > 0.$  Suppose that $a, b \in B_{\ovl{\Omega}}(x, r).$  By \eqref{omega gamma diam}, we may assume without loss of generality that $B_{\ovl{\Omega}}(x,r)$ is compactly contained in $U$.  Since $a, b \in U$, there is a path $\alpha\colon [0,1] \to U$ such that $\alpha(0)=a$, $\alpha(1)=b$, and $\im\alpha \subeq B_X(x, 2\Lambda_K r).$  Let 
$$t_a = \min\{ t \in [0,1] : \alpha(t) \in \im\gamma\} \mand t_b = \max\{t \in [0,1] : \alpha(t) \in \im\gamma\}.$$
Since $\gamma$ is a $\lambda$-chord-arc curve, $\gamma_{ab} \subeq \im\gamma$ which connects $\alpha(t_a)$ to $\alpha(t_b)$ and satisfies 
$$\diam(\gamma_{ab}) \leq d(\alpha(t_a), \alpha(t_b)) \leq 4\lambda \Lambda_K r.$$
Then 
$$\alpha([0, t_a]) \cup \gamma_{ab} \cup \alpha([t_b, 1]) \subeq B(x, 2\Lambda_K(4\lambda+1) r)$$
is a continuum connecting $a$ to $b$.  This shows that $\ovl{\Omega}$ is $2\Lambda_K(4\lambda+1)$-$LLC_1$.  

Now suppose that $a,b \in \ovl{\Omega}\bslash B_{\ovl{\Omega}}(x,r).$ As above, there is a path $\alpha \colon [0, 1] \to U$ such that $\alpha(0)=a$, $\alpha(1)=b$, and $\im(\alpha) \subeq U \bslash B_{X}(x, r/2\Lambda_K).$ Define $t_a$ and $t_b$ as above; we may write $\gamma = \gamma_1 \cup \gamma_2$, where for $i=1,2$, $\gamma_i$ is a closed subarc of $\gamma$ with endpoints $\alpha(t_a)$ and $\alpha(t_b)$. Suppose that we may find points $$x_1 \in \gamma_1 \cap B\left(x, \frac{r}{4\Lambda_K(2\lambda+1)}\right)  \mand x_2 \in \gamma_2 \cap B\left(x, \frac{r}{4\Lambda_K(2\lambda+1)}\right).$$
Then $$d(x_1, x_2) \leq \frac{r}{2\Lambda_K(2\lambda+1)},$$ and so the by the quasiarc property of $\gamma$, either $\alpha(t_a)$ or $\alpha(t_b)$ is contained in 
$$\ovl{B}\left(x_1,\frac{\lambda r}{2\Lambda_K(\lambda+1)}\right) \subeq B\left(x, \frac{r}{4\Lambda_K}\right).$$
This contradicts the fact that $\im \alpha \subeq U \bslash B_X(x, r/2\Lambda_K).$  Thus there is some $i \in \{1, 2\}$ such that $\alpha([0, t_a]) \cup \gamma_i \cup \alpha([t_b,1])$ connects $a$ to $b$  in 
$$\ovl{\Omega}\bslash  B\left(x, \frac{r}{4\Lambda_K(2\lambda+1)}\right) .$$
Thus we have shown that $\ovl{\Omega}$ is $\lambda'$-$LLC$ with $\lambda' = 4\Lambda_K(4\lambda+1).$

We now show that $\ovl{\Omega}$ is Ahlfors $2$-regular with constant depending only on $\Lambda_K$ and $C_K$.  Let $x \in \ovl{\Omega}$ and $0 \leq r \leq \diam(\ovl{\Omega}).$  Since $U$ is relatively Ahlfors $2$-regular with constant $C_K$, $\ovl{\Omega}$ is as well, and by \eqref{omega gamma diam}, we see that $r \leq \diam(U).$ It follows from the definition of Hausdorff measure that
$$\Hdim^2_{\ovl{\Omega}}(\ovl{B}_{\ovl{\Omega}}(x,r)) \leq 4\Hdim^2_{X}(\ovl{B}_{\ovl{\Omega}}(x,r)) \leq 4\Hdim^2_{X}(\ovl{B}_{X}(x,r)) \leq 4C_Kr^2.$$
	
To show the lower bound, we consider four cases. 

\bigskip\noindent
\textit{Case 1: $x \in \Omega$ and $r < \dist(x, \im\gamma)$.} Since $\ovl{\Omega}$ is $\lambda'$-$LLC$, the connected component of $x$ in $\ovl{B}_{\ovl{\Omega}}(x, r)$ contains the ball $\ovl{B}_{\ovl{\Omega}}(x, r/2\lambda').$  This implies that  there is some $\ep>0$ such that the $\ep$-neighborhood of $\ovl{B}_{X}(x, r/2\lambda')$ is contained in $\ovl{\Omega}$.  As a result, 
$$\Hdim^2_{\ovl{\Omega}}(\ovl{B}_{\ovl{\Omega}}(x, r)) \geq \Hdim^2_{\ovl{\Omega}}\left(\ovl{B}_{\ovl{\Omega}}\left(x, \frac{r}{2\lambda'}\right)\right) =  \Hdim^2_{X}\left(\ovl{B}_{X}\left(x, \frac{r}{2\lambda'}\right)\right) \geq \frac{r^2}{4C_K {\lambda'}^2}.$$

\bigskip\noindent
\textit{Case 2: $x \in \gamma$ and $0 \leq r \leq \diam(\ovl{\Omega}).$}  By Theorem \ref{porosity thm}, there is a constant $C$, depending only on $C_K$ and $\Lambda_K$, and a point $y \in \Omega$ such that 
$$B_{\ovl{\Omega}}(y, r/C) \subeq B_{\ovl{\Omega}}(x, r) \bslash \im\gamma.$$
It follows from Case 1 that 
$$\Hdim^2_{\ovl{\Omega}}(\ovl{B}_{\ovl{\Omega}}(x, r)) \geq\Hdim^2_{\ovl{\Omega}}\left(\ovl{B}_{\ovl{\Omega}}\left(y, \frac{r}{2C}\right)\right) \geq \frac{r^2}{16C_K (C\lambda')^2}.$$

\bigskip\noindent
\textit{Case 3: $x \in \Omega$ and $\dist(x,\im\gamma) \leq r < 4\dist(x, \im\gamma)$.}  By Case 1, we have 
$$\Hdim^2_{\ovl{\Omega}}(\ovl{B}_{\ovl{\Omega}}(x, r)) \geq \Hdim^2_{\ovl{\Omega}}\left(\ovl{B}_{\ovl{\Omega}}\left(x, \frac{r}{4}\right)\right) \geq \frac{r^2}{64C_K\lambda'^2}.$$

\bigskip\noindent
\textit{Case 4: $x \in \Omega$ and $r \geq 4\dist(x, \im\gamma).$} We may find a point $y \in \gamma$ such that $d(x,y) < r/2.$   Then $\ovl{B}_{\ovl{\Omega}}(y, r/2) \subeq B_{\ovl{\Omega}}(x, r)$, and so by Case 2 we have 
$$\Hdim^2_{\ovl{\Omega}}(\ovl{B}_{\ovl{\Omega}}(x, r)) \geq\Hdim^2_{\ovl{\Omega}}\left(\ovl{B}_{\ovl{\Omega}}\left(y, \frac{r}{2}\right)\right) \geq \frac{r^2}{64C_K (C\lambda')^2}.$$

Thus we have shown that $\ovl{\Omega}$ is Ahlfors $2$-regular with constant $64C_K(C\lambda')^2$.  The desired quasisymmetric homeomorphism is now provided by Theorem \ref{main}. 
\end{proof}

\bibliographystyle{plain}
\bibliography{LocUnif}

\end{document}